\newtheorem{definition}{Definition}
\newtheorem{cor}[definition]{Corollary}
\newtheorem{lemma}[definition]{Lemma}
\newtheorem{proposition}[definition]{Proposition}
\newtheorem{remark}[definition]{Remark}
\newtheorem{theorem}[definition]{Theorem}
\newcommand{\R}{\mathbb{R}}
\newcommand{\ep}{\varepsilon}
\newcommand{\rv}[1]{\bm{#1}}
\newcommand{\set}[1]{\mathbb{#1}}
\newcommand{\cL}{\mathcal{L}}
\newcommand{\temp}{\mathrm{temp}}
\newcommand{\spat}{\mathrm{spat}}
\renewcommand{\epsilon}{\varepsilon}
\newcommand{\Goa}{G_{0,a}}
\newcommand{\incmap}{\mathbbm{i}}
\newcommand{\bbGamma}{\reflectbox{\rotatebox[origin=c]{180}{$\mathbb L$}}}
\newcommand{\new}[1]{{#1}}
\title{Detecting the birth and death of finite-time coherent sets}
\author[1]{Gary~Froyland}
\author[2]{P\'eter~Koltai}
\affil[1]{School of Mathematics and Statistics, University of New South Wales, Sydney NSW 2052, Australia.}
\affil[2]{Institute of Mathematics, Freie Universit\"at Berlin, 14195 Berlin, Germany.}
\date{}
\begin{document}

\maketitle

\begin{abstract}
Finite-time coherent sets (FTCSs) are distinguished regions of phase space that resist mixing with the surrounding space for some finite period of time;  physical manifestations include eddies and vortices in the ocean and atmosphere, respectively.
The boundaries of finite-time coherent sets are examples of Lagrangian coherent structures (LCSs).
The selection of the time duration over which FTCS and LCS computations are made in practice is crucial to their success.
If this time is longer than the lifetime of coherence of individual objects then existing methods will fail to detect the shorter-lived coherence.
It is of clear practical interest to determine the full lifetime of coherent objects, but in complicated practical situations, for example a field of ocean eddies with varying lifetimes, this is impossible with existing approaches.
Moreover, determining the timing of emergence and destruction of coherent sets is of significant scientific interest.
In this work we introduce new constructions to address these issues.
The key components are an inflated dynamic Laplace operator and the concept of semi-material FTCSs.
We make strong mathematical connections between the inflated dynamic Laplacian and the standard dynamic Laplacian \cite{Fro15}, showing that the latter arises as a limit of the former.
The spectrum and eigenfunctions of the inflated dynamic Laplacian directly provide information on the number, lifetimes, and evolution of coherent~sets.
\end{abstract}

%%%
\section{Introduction}
Lagrangian methods have proven to be powerful tools for elucidating the transport properties of non-autonomous and time-dependent dynamical systems.
Beginning with early approaches \cite{MMP84,romkedar90,pierrehumbert91,pierrehumbertyang93,pojehaller98} on identifying distinguished transport barriers, in the last fifteen years there has been focus on coherent behaviour.
This includes so-called Lagrangian coherent structures (LCSs), again targeting barriers to transport, and for which there are a large variety of approaches to their definition and identification:  a very small sample of this work is \cite{shaddenetal05,rypina2011investigating,allshouse2012detecting,budivsic2012geometry,haller2013coherent,ma2014differential,AlPe15}.
Finite-time coherent sets (FTCSs) \cite{FSM10,F13} are mobile regions in the phase space that resist mixing and provide a skeleton around which more complicated dynamics occurs.
Despite the moniker ``coherent'', these structures are often ephemeral:  they emerge, live for some time, and then decay and die.

Lagrangian methods are by their nature concerned with computations that follow trajectories over some
specified time interval of interest, rather than combining information across time at a fixed location in phase
space as in so-called Eulerian methods (e.g.\ using sea-surface height as a method of finding ocean eddies \cite{fu06,chelton07}).
By following trajectories, Lagrangian methods thus primarily detect structures that are coherent (according to various criteria) \emph{for the dominant part of the time interval under study}.
This reliance of Lagrangian coherent structure theory and numerics on objects being coherent
throughout (or throughout a large proportion of) the computed flow duration has remained essentially unchanged
since their introduction almost two decades ago.

The question of determining when coherent structures are born and when they die is largely unaddressed in the dynamical systems literature.
To quote MacMillan \emph{et al.} \cite{macmillan20}: \emph{``One major shortcoming of these (LCS) techniques, however, is the lack of an objective procedure for identifying time scales of interest, or an ability to characterise the lives, deaths, or age of coherent structures, especially when relevant flow time scales are larger than the time scales associated with coherence.''}
Several previous studies have investigated lifetimes in the context of ocean eddies, e.g.\ Froyland \emph{et al.} \cite{FrEtAl12} first identified a suitable timescale and then carried out a series of FTCS computations on time windows sliding forward in time, Andrade \emph{et al.} \cite{AnKaBV20} exhaustively search a discretised two-parameter space $(t,T)$ where $t$ is the initial time and $T$ is the flow duration, using these pairs as variable inputs to many separate LCS computations. El Aouni~\cite{ElA21} identifies the timespans of local rotational motion during each Lagrangian trajectory and then defines an eddy as those trajectories that are close at the beginning and the end of their respective timespans.

We build a theoretical framework to directly tackle this problem, using the successful spectral approach of the dynamic Laplace operator~\cite{Fro15,FrKw17} as a foundation.
We time-expand our spatial domain to create an inflated dynamic Laplace operator and allow ``time'' to become a diffusion process itself.
This enables us to relax the strict requirement that coherent sets or coherent structures be {exactly material} (i.e., follow flow trajectories), and leads to the notion of semi-material FTCS, which naturally allow coherent regions to appear and vanish over time.
Our constructions are interpreted from multiple viewpoints:  the spectrum of Laplace--Beltrami operators, the trajectories of stochastic differential equations, and properties of dynamic Riemannian metrics.

\subsection{Setting and Background}

We consider deterministic and stochastically perturbed time-dependent dynamical systems.
We work primarily in continuous time, however the ideas and constructions naturally cover the discrete-time case.
Let $v:[0,\tau]\times \mathbb{R}^d \to \R^d$ denote a \new{smooth} time-dependent velocity field, over a finite time duration $[0,\tau]$; for simplicity we assume that $v(t,\cdot)$ is divergence free for all $t\in [0,\tau]$.
Because we are interested in finite-time coherence we consider $\tau<\infty$; see e.g.\ \cite{FLS10,GiDa20} for techniques related to infinite-time coherence.
Denote by $M\subset\mathbb{R}^d$ a \new{$d$}-dimensional, connected, compact submanifold representing the phase space at time $t=0$.
\new{The operator-theoretic and geometric results in this paper are extendable to general compact Riemannian manifolds $M$, but to avoid obscuring the key contributions we work with flat $M$ embedded in Euclidean space.}
%, see the assumptions above~\eqref{eqn:fp}. 
Denote by $\phi_t:M\to\phi_t(M)$ the flow map generated by $v$ from time 0 to time $t$, and set $M_t=\phi_t(M)$, $t\in[0,\tau]$.
\new{Note that $\phi_t$ diffeomorphically maps $M$ onto $M_t$.}
Denote by $\mathcal{P}_t:L^2(M)\to L^2(M_t)$ the transfer operator for $\phi_t$, using Lebesgue as the reference measure.
Finite-time coherent sets described by \cite{F13,DJM,FKS20} are constructed by adding small isotropic diffusion to the phase space dynamics.
One creates an operator $\mathcal{P}_{\epsilon,t}:L^2(M)\to L^2(M_t)$, which solves the Fokker--Planck equation
\begin{equation}
\label{eqn:fp}
\partial_t f = -\nabla\cdot(fv)+\frac{\epsilon^2}{2}\Delta f
\end{equation}
with homogeneous Neumann boundary conditions; that is, $f(t,\cdot):=\mathcal{P}_{\epsilon,t}f(0,\cdot)$ is a solution to~\eqref{eqn:fp}.
For $t>0$, the compact operator $\mathcal{P}_{\epsilon,t}$ has a singular value 1 with unit multiplicity, and there is a gap to the next singular value.

Coherent sets over the interval $[0,\tau]$ are described by level sets of leading singular vectors of $\mathcal{P}_{\epsilon,\tau}$;  in particular at time $0$, one considers level sets of the eigenvectors $f$ of $\mathcal{P}_{\epsilon,\tau}^*\mathcal{P}_{\epsilon,\tau}$ and at time $t$ level sets of~$\mathcal{P}_{\ep,t}f$, see \cite{F13,FPG14}.
For small $\epsilon$, these coherent sets are approximately \emph{material} under the purely advective dynamics of $\phi_t$, meaning that if $A_t$ is a coherent set at time~$t$, then $A_t \approx \phi_t( \phi_s^{-1}(A_{s}) )$.
The family becomes more material as $\epsilon$ is decreased~\cite{FPG14}.

In the limit as $\epsilon\to 0$, for fixed $t$, $\mathcal{P}_{\epsilon,t}^*\mathcal{P}_{\epsilon,t}$ approaches the identity operator, and one can take a singular limit to obtain a \emph{dynamic Laplace operator} \cite{Fro15,FrKw17,KaSch21}, denoted~$\Delta^D$.
In this purely deterministic setting, coherent sets at time $t=0$ are identified as level sets of dominant eigenfunctions of~$\Delta^D$ \cite{Fro15}.
These level sets are \emph{exactly material} under the flow $\phi_t$, and represent material sets that stay most coherent under vanishing diffusion.
In this work we relax the strict materiality requirement while maintaining purely advective dynamics on $\phi_t(M)$, $t\in [0,\tau]$.
This will enable the identification of coherent sets that appear and disappear within some larger time window~$[0,\tau]$.

Let $\phi_t^*e$ denote the pullback of the Euclidean metric $e$ from the manifold $\phi_t(M)$ to the manifold~$M$.
In the divergence-free setting considered in this paper, the dynamic Laplacian $\Delta^D$ on $L^2(M)$ has the form~\cite{Fro15}
\begin{equation}
\label{DL}
\Delta^D=\frac{1}{\tau}\int_0^\tau \Delta_{\phi_t^*e}\ dt = \frac{1}{\tau}\int_0^\tau \Delta_{g_t}\ dt,
\end{equation}
where from now on we use the notation $g_t:=\phi_t^*e$.
This is an average of Laplace--Beltrami operators for the Riemannian manifolds $(M,g_t)$,~$t\in[0,\tau]$.

\subsection{Relaxing materiality and a new key object}
In order to relax materiality, we time-expand the phase space, giving each manifold $(M,g_t)$ its own $t$-fibre.
Topologically, this time-expanded domain is simply
\begin{equation}
\label{eq:M0}
\mathbb{M}_0:=\bigcup_{t\in [0,\tau]}\{t\}\times M=[0,\tau]\times M.
\end{equation}
We define the ``co-evolved'' spacetime manifold by
\begin{equation}
\label{eq:M1}
\set{M}_1:=\bigcup_{t\in [0,\tau]}\{ t \}\times\phi_t(M).
\end{equation}
In $\mathbb{M}_0$, a curve corresponding to the trajectory $\{\phi_t(x):0\le t\le \tau\}\subset M$ is simply the line $\{(t,x):0\le t\le \tau\}$.
The canonical mapping from $\set{M}_0$ to the trajectory manifold $\set{M}_1$, associating initial conditions with trajectories, is
\begin{equation}
	\label{eq:Phi}
	\Phi: \set{M}_0 \to \set{M}_1,\quad (t,x) \mapsto (t, \phi_t(x)).
\end{equation}
Figure~\ref{fig:topsum} illustrates these constructions in two situations:  there is a coherent family of sets $\phi_t(A)$ throughout the whole time interval $[0,\tau]$ (upper row) and a coherent family of sets $\phi_t(A_t)$ that is born at time $\tau_1$ and extinguished at time $\tau_2$ (lower row).
\begin{figure}[htbp]
  \centering
  \includegraphics[width = 1\textwidth]{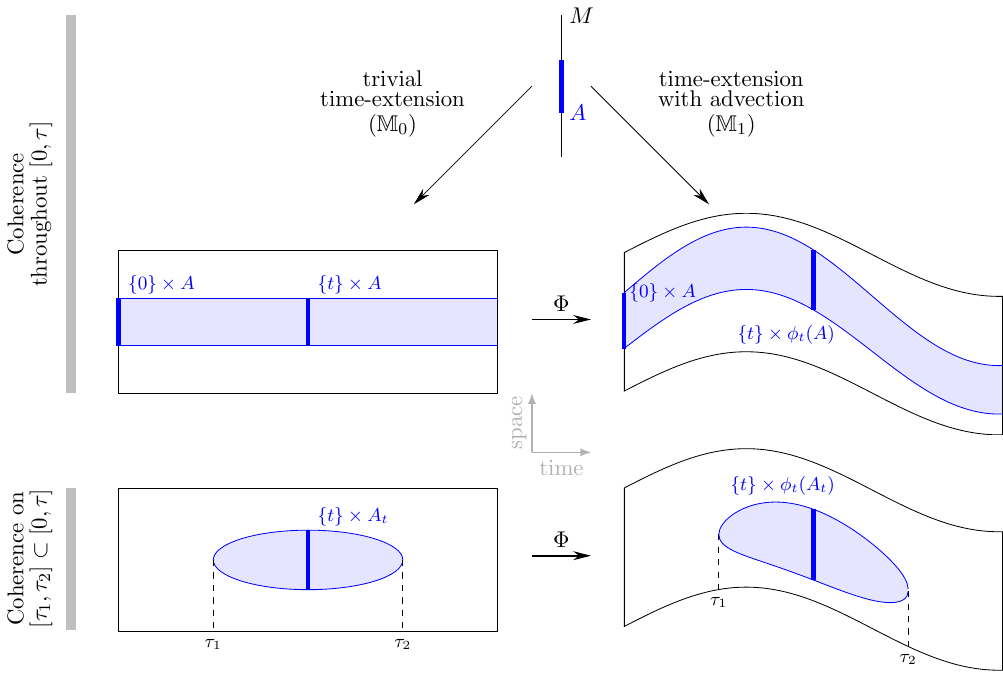}

  \caption{\emph{Time-expanded constructions and fully vs partially present coherence}.
The top of the diagram shows the situation where there is (for simplicity of presentation) a single coherent set $A\subset M$ present throughout the time interval~$[0,\tau]$,  shown as a dark vertical blue line at the very top of the figure. Upper left: by trivial copying in time we obtain the pale blue horizontal strip $[0,\tau] \times A\subset\mathbb{M}_0$. Upper right: by evolving $A$ forward in time with the dynamics from time 0 to time $\tau$ we trace out the pale blue set $\bigcup_{t\in[0,\tau]}\{t\}\times\phi_t(A)\subset\mathbb{M}_1$.
  The lower row of the diagram concerns the situation where there is a coherent set present \emph{only for part of the time interval}, say a subinterval $[\tau_1,\tau_2]\subset [0,\tau]$.  Lower right: following the dynamics, a coherent set appears at $\tau_1$ from a small expanding core, exists for a while, and then shrinks and dissipates completely at~$\tau_2$.  Lower left:  We pull back the lower right image to time $t=0$ using the inverse of~$\Phi$.}
  \label{fig:topsum}
\end{figure}

By considering the Euclidean metric on each $\phi_t(M)$, the above constructions naturally suggest a metric on~$\set{M}_0$.
At a point $(t,x)\in\mathbb{M}_0$, we define local distances by the metric with coordinate representation
\begin{equation}
\label{metriceqn}
\left(
  \begin{array}{cc}
    1 & 0 \\
    0 & D\phi_t(x)^\top D\phi_t(x) \\
  \end{array}
\right),
\end{equation}
where the lower right block is the local matrix representation of $g_t=\phi_t^*e$ at~$x\in M$.
We denote by $G_0$ the metric on $\mathbb{M}_0$ given pointwise by \eqref{metriceqn}.

Our key new object is the \emph{inflated dynamic Laplace operator}, which we briefly now describe, with further details to follow.
Consider the Laplace--Beltrami operator $\Delta_{G_0}:L^2(\mathbb{M}_0,G_0)\to L^2(\mathbb{M}_0,G_0)$;  for the moment we delay the discussion of boundary conditions.
Because of the time-fibered structure~\eqref{metriceqn} of the metric $G_0$, we may write
$
%\begin{equation*}
\Delta_{G_0} F(t,\cdot) = \partial_{tt} F(t,\cdot)+\Delta_{g_t} \, F(t,\cdot).
%\end{equation*}
$
In fact, we will consider a family of Laplace--Beltrami operators with a parameter~$a>0$,
\begin{equation}
\label{eq:LBab}
\Delta_{\Goa} F(t,\cdot) = a^2  \partial_{tt} F(t,\cdot)+ \Delta_{g_t} \, F(t,\cdot).
\end{equation}
%where the index $0$ explicitly refers to the domain~$\set{M}_0$.
We will show that these operators interpolate between the dynamic Laplacian $\Delta^D$ (in the $a\to\infty$ limit), whose level sets of eigenfunctions form \emph{exactly material} families of coherent sets, and a purely non-dynamic Laplace--Beltrami operator (when $a=0$), whose level sets of eigenfunctions need not have any material properties.
Thus, the parameter $a$ interpolates the material requirement from strictly material to non-material.
By selecting an appropriate $a$, the eigenfunctions of $\Delta_{\Goa}$ will identify (i) time intervals of strong and weak mixing, and (ii) coherent sets within the time intervals of weak mixing.

We note that time-expansion has been used in the context of transfer operators to find coherent sets of periodic \cite{FrKo17}, finite-time aperiodic \cite{FKS20} and aperiodic \cite{GiDa20} flows.
There are several differences between these works and our current constructions, including, but not limited to:  (i) we do not require coherent behaviour throughout the flow duration being considered, (ii) we work with Laplace--Beltrami operators instead of transfer operators, (iii) our analysis is carried out on $\set{M}_0$, rather than the co-evolved time-expanded manifold, (iv) we consider time as a diffusion process, instead of it increasing with a constant speed.
Moreover, in addition to the stochastic trajectory and transfer operator interpretations in \cite{FrKo17,FKS20}, we also provide a differential-geometric perspective.
Other work arising from the dynamic Laplacian includes \cite{KK,KaSch21,schillingetal21}, where the emphasis is on the time-averaged processes generated by the dynamic Laplacian in the initial time slice on $M$.

Laplace-spectral approaches \cite{Gomez2013diffusion} to analysing multilayer networks \cite{de2013mathematical,boccaletti2014structure,kivela2014multilayer} share some structural similarities to~\eqref{eq:LBab}, where diffusion occurs both within and across network layers.
In the particular case of a two-layer network, \cite{Gomez2013diffusion} study the dependence of the spectrum of a ``supra-Laplacian'' on the coupling strength.
This is formally similar to the construction of \cite[equation (7)]{FackEtAl19}, which considers multiple layers.
Our results concerning the behaviour of the spectrum and eigenfunctions of $\Delta_{\Goa}$ with varying diffusion strength $a$ should carry over to multilayer networks to describe the analogous behaviour with varying interlayer coupling strength and connect to graph-based versions of the dynamic Laplace operator~\cite{FrKw15}.

Finally, the birth and death of a coherent set represents a structural change in the dynamics.
An unrelated type of structural change for almost-invariant and coherent sets is the crossing of eigenvalues of the transfer operator, which is sometimes, but not always, associated with the bifurcation of the sets.
Case studies that consider these types of perturbations include \cite{Jungeetal04} (autonomous), \cite{groveretal12} (periodic), and \cite{blachutGT20,ndouretal21} (non-autonomous).
However, we note that bifurcations are not particularly prevalent, for example if a spectral value is currently isolated from other spectral values, \cite{AFJ21} shows that the eigenvalues and eigenfunctions of the dynamic Laplacian varying differentiably for small perturbations of the flow duration, or of the underlying dynamics.

\paragraph{Outline}
In section~\ref{sect:diff-time} we provide independent motivation for the above geometric construction using stochastic trajectories, and then connect this to the geometry on $\set{M}_1$ and $\set{M}_0$ in sections \ref{ssec:timediffusedM1} and~\ref{ssec:pullback_SDE}, respectively.
In section~\ref{sec:dynLapConns} we derive results interpolating between material and non-material coherence.
Section~\ref{ssec:dynLap} briefly recaps the dynamic Laplace operator and section~\ref{ssec:avging} shows that the dynamic Laplacian~$\Delta^D$ arises from the inflated dynamic Laplacian $\Delta_{\Goa}$ in the $a\to\infty$ limit, by invoking the theory of averaging.
The dynamic theory of Cheeger and Sobolev constants for $\Delta^D$ is linked to classical notions of these constants on the Riemannian manifold $\set{M}_0$ in section~\ref{ssec:cheeger}.
We characterise the behavior of the spectrum of the inflated dynamic Laplacian $\Delta_{\Goa}$ in section~\ref{ssec:eigenvalues}, connecting it with the spectrum of the dynamic Laplacian.
In section~\ref{ssec:practical} this theoretical information is synthesised into a practical approach to find coherent sets with lifetimes shorter than the full flow duration.
A reduced PDE corresponding to (\ref{eq:LBab}) where all spatial information is collapsed is derived in section~\ref{ssec:surr_deriv}, enabling a comparison of instantaneous coherent set decay at time $t$ with average decay across~$[0,\tau]$.
In idealised coherent and mixing regimes, section~\ref{ssec:surrogate} provides fine detail on the behaviour of the time-fibre norms of the eigenfunctions of the inflated dynamic Laplacian.
We develop a trajectory-based numerical scheme based on a specialised finite element method in section~\ref{sec:FEMDL}, and illustrate our theory via an example in section~\ref{sec:examples}.

%%%
\section{From diffusion to geometry}

Recall that deterministic trajectories are represented in $\mathbb{M}_0$ as straight lines parallel to the time axis, and they can be uniquely parametrized by their initial conditions~$(0,x_0) \in \{0\}\times M$.
In this section we interpret the paths generated by the SDE associated with the inflated dynamic Laplace operator $\Delta_{\Goa}$ in~\eqref{eq:LBab} on $\set{M}_0$.
These paths are driven by a pure diffusion process and in section \ref{ssec:pullback_SDE} we will show that these paths \emph{independently} jump \emph{along} and \emph{between} trajectories $\{(t,x_0)\,\vert\, t\in [0,\tau]\}$ of the deterministic flow in~$\set{M}_0$.
In a dynamical sense it is natural to first consider the dynamics on its ``true'', co-evolved space, so we begin our analysis with a process on~$\set{M}_1$ in sections \ref{sect:diff-time} and \ref{ssec:timediffusedM1}, and then pull these constructions back with $\Phi$ to $\set{M}_0$, in section~\ref{ssec:pullback_SDE}.

%%%
\subsection{Trajectory-based view}
\label{sect:diff-time}

Recall from the introduction that coherent sets as defined in \cite{F13,DJM} rely on the addition of diffusion to the deterministic dynamics;  this is so that large boundaries are penalised through greater diffusive mixing. The process we will consider, which gives rise to the Fokker--Planck (Kolmogorov forward) equation~\eqref{eqn:fp}, is a time-inhomogeneous It{\^o} diffusion process governed by the SDE
\begin{equation} \label{eq:SDE}
d\rv{x}_t = v(t,\rv{x}_t)dt +  \ep \, d\rv{b}_t,
\end{equation}
where $v$ is a smooth $d$-dimensional divergence-free velocity field, $ \ep  \ge 0$, and $\rv{b}_t$ is a $d$-dimensional standard Wiener process.
We assume $\rv{x}_0$ to be uniformly distributed, thus the process is stationary.
Let the set of initial conditions, $M$, be a \new{compact}, smooth, flat, $d$-dimensional manifold (usually a subset of $\R^d$ with smooth boundary, a cylinder, or a torus) equipped with a (Euclidean) metric.
If $M$ has a boundary, \eqref{eq:SDE} is equipped with reflecting boundary conditions on $\smash{\bigcup_{t\in[0,\tau]} \{t\}\times \partial \left(\phi_t(M)\right)}$ in space-time; maintaining stationarity. Note that the domain for the SDE co-evolves with the deterministic flow driven by the velocity field~$v$, thus~\eqref{eq:SDE} lives on~$\set{M}_1$.

Next we will view the temporal component $t$ of \eqref{eq:SDE} as an independent variable $\theta$ undergoing diffusion. Since this diffusion can move in both directions along a line, the time component of~\eqref{eq:SDE} will also evolve in positive and in negative directions.
We assume that the temporal parameter $\theta$ performs a Brownian diffusion with reflecting boundary conditions on $[0,\tau]$ and constant diffusion coefficient~$ a >0$:
\begin{equation} \label{eq:theta_dyn}
d\rv{\theta}_t =  a \, d\rv{w}_t,
\end{equation}
where $\rv{w}_t$ is a standard one-dimensional Wiener process independent of~$\rv{b}_t$.
Equation \eqref{eq:SDE} now becomes
%Substituting equation \eqref{eq:theta_dyn} into \eqref{eq:fwdbwd_aug} gives
\begin{equation} \label{eq:fwdbwd_augAlt}
d\rv{x}_t =  a \, v(\rv{\theta}_t,\rv{x}_t)\circ d\rv{w}_t +  \ep \, d\rv{b}_t,
\end{equation}
where we now assumed the Stratonovich form, and will explain the reason for this below. We call this the \emph{time-diffused process}.
An equivalent form of the system of equations \eqref{eq:theta_dyn} and \eqref{eq:fwdbwd_augAlt} can be written in~$\R^{d+1}$ as
\begin{equation}
\label{eq:difftimeproc}
d\rv{X}_t = \underbrace{\begin{pmatrix}
 a \, & 0^\top\\
 a \,\, v(\rv{\theta}_t,\rv{x}_t) &  \ep \, \mathrm{Id}_{d\times d}
\end{pmatrix}}_{=: \sigma( \rv{X}_t )} \circ\, d\rv{B}_t,
\end{equation}
with
\[
\rv{X}_t = \begin{pmatrix}
\rv{\theta}_t\\
\rv{x}_t
\end{pmatrix},
%\
%\sigma(\rv{X}_t) = \begin{pmatrix}
% a \, & 0^\top\\
% a \,\, v(\rv{\theta}_t,\rv{x}_t) &  \ep \, \mathrm{Id}_{d\times d}
%\end{pmatrix},
\
\rv{B}_t = \begin{pmatrix}
\rv{w}_t\\ \rv{b}_t
\end{pmatrix}.
\]
Note that we use upper case letters to denote the time-augmented version of a variable.
The main reason for the the Stratonovich interpretation in~\eqref{eq:fwdbwd_augAlt} and~\eqref{eq:difftimeproc} is satisfaction of the chain rule, which is important in the first of the following two situations.
\begin{enumerate}
\item
\textbf{Spatial deterministic limit: $\epsilon\to 0$.}
When $\ep=0$, there is no spatial noise, and so we would like the paths of \eqref{eq:theta_dyn}--\eqref{eq:fwdbwd_augAlt} to \emph{stay on trajectories $\{(\theta,x_{\theta})\,\vert\, \theta\in [0,\tau]\}$ of the deterministic ODE $\dot{x}_t = v(t,x_t)$.}
This is only guaranteed in the Stratonovich case~\cite[Proposition~1.2.8]{Hsu02}.
For a deterministic trajectory $(x_r)_{r\in [0,\tau]}$ parametrized by the random time parameter $r = \rv{\theta}_t$, i.e., $\rv{z}_t := x_{\rv{\theta}_t}$, we have
\[
d\rv{z}_t = \frac{d}{dr} x_r \big\vert_{r=\rv{\theta}_t} \circ d\rv{\theta}_t=  v(\rv{\theta}_t, \rv{z}_t ) \circ (  a \, d\rv{w}_t),
\]
exactly~\eqref{eq:fwdbwd_augAlt} for~$\ep=0$.
Since the solution of \eqref{eq:fwdbwd_augAlt} is unique (for a fixed realization of the process~$\rv{w}_t$), $\rv{z}_t = x_{\rv{\theta}_t}$ is this solution, and it clearly evolves along trajectories of the deterministic ODE.
\item \textbf{High temporal diffusion limit: $a\to\infty$.}
A similar situation occurs if instead we fix $\epsilon>0$ and let the temporal diffusion coefficient $a$ increase to infinity.
%At the other extreme is when the temporal diffusion magnitude $\epsilon$ increases to infinity.
     In the Lagrangian frame $\set{M}_0$, the drift $v$ in (\ref{eq:fwdbwd_augAlt}) is zero and $\Phi$-pullbacks of the paths of~$\rv{X}_t$ become increasingly aligned with the time axis because the stochastic variation in the temporal component (controlled by $a$) dominates the stochastic variation in space (controlled by $\epsilon$).
     In $\set{M}_1$, paths of~$\rv{X}_t$ therefore align with trajectories $\{(\theta,x_{\theta})\,\vert\, \theta\in [0,\tau]\}$ with overwhelming probability in the $a\to\infty$ limit.
\end{enumerate}
We will return to these situations in section~\ref{ssec:avging}, showing that they are effectively equivalent.
In the situation where $\epsilon$ is large relative to $a$, the stochastic trajectories of \eqref{eq:theta_dyn}--\eqref{eq:fwdbwd_augAlt} may significantly deviate from the deterministic trajectories, and this deviation will be crucial for relaxing the strictly material nature of FTCSs.

%%%
\subsection{The time-diffused process and Brownian motion on $\set{M}_1$}
\label{ssec:timediffusedM1}
% The inflated dynamic Laplace operator and the corresponding metric

So far the processes $\rv{b}_t$ and $\rv{w}_t$ have been standard Wiener processes with respect to the Euclidean metric on $\phi_t(M)$, $0\le t\le \tau$.
Next we wish to interpret certain processes as (standard) Brownian motions with respect to a suitable (Riemannian) metric. For clarity, in these cases the metric (or the entire Riemannian manifold) will be explicitly stated. For this interpretation, we will use the fact that standard Brownian motion on a Riemannian manifold is (up to equivalence in law) given by its generator, which is the \emph{one half} Laplace--Beltrami operator on the manifold~\cite[Chapter~3]{Hsu02}.

The process we would like to interpret as Brownian motion on a Riemannian manifold is $\rv{X}_t$, governed by the SDE~\eqref{eq:difftimeproc}.
This augmented process lives on the augmented spacetime manifold~$\set{M}_1$, defined by \eqref{eq:M1}, and so we need to find a metric on $\set{M}_1$ whose Laplace--Beltrami operator is the generator of this process.
One reason for seeking this connection is to link our construction to geometric characterisations of coherent sets~\cite{Fro15}.
A second reason is to further develop our formalism for coherent sets when the ``materialness requirement'' that all previous work relied on is relaxed.
In our derivation, we will rely on linking a SDE---in law---to a Riemannian metric via the associated Fokker--Planck (Kolmogorov forward) equation and the Laplace--Beltrami operator. Here, the Fokker--Planck equation identifies a SDE uniquely only in law, not pathwise.

We recall that we assumed $v$ to be divergence-free. We expect to be able to derive similar results to those below for the non-divergence-free case as well, however, at the cost of more technical exposition that would obscure the main points.
With this assumption, using the definition of $\sigma$ in \eqref{eq:difftimeproc}, it is immediate that $\nabla\cdot \sigma^\top  := \nabla_{(\theta,x)}\cdot \sigma^\top  \equiv 0$, where the divergence operator is applied to a matrix row-wise.
% see below~\eqref{eq:difftimeproc}.
We recall from~\cite[p.~63]{Pav14} that the It{\^o} form of a Stratonovich SDE $d\rv{z}_t = b\,dt + \sigma \circ d\rv{w}_t$, where the drift $b$ and the diffusion coefficient $\sigma$ depend on time and space as well, is given by
%	\label{foot:ItoStratSDE}
	$d\rv{z}_t = (b+ \frac{1}{2}(\nabla \cdot (\sigma \sigma^\top ) - \sigma \nabla\cdot\sigma^\top ))\,dt + \sigma\,d\rv{w}_t.$
Thus, the It{\^o} form of \eqref{eq:difftimeproc} is
\begin{equation}
\label{ito}
d\rv{X}_t = \frac12 \nabla\cdot\Sigma_1 (\rv{X}_t)dt + \sigma(\rv{X}_t)\, d\rv{B}_t, \quad\text{where }\Sigma_1 =\sigma\sigma^\top ,
\end{equation}
and the drift is $\frac12 \nabla\cdot\Sigma_1$.
From \cite[pp.~69 \& 71]{Pav14} we recall that an It{\^o} SDE $d\rv{z}_t = b\,dt + \sigma\,d\rv{w}_t$ has the Fokker--Planck operator (forward Kolmogorov generator, acting on the usual Sobolev space $H^2$, or in a variational characterisation described by a bilinear form on~$H^1$)
\begin{equation}
	\label{foot:ItoFPE}
	\cL^* f = \nabla\cdot\left( -bf + \tfrac12 \nabla\cdot (\sigma\sigma^\top  f) \right) = \nabla\cdot\left( \left(-b + \tfrac12 \nabla\cdot (\sigma\sigma^\top ) \right) f + \tfrac12 \sigma\sigma^\top  \nabla f \right).
\end{equation}
Thus, the Fokker--Planck operator corresponding to~\eqref{ito}, $\mathcal{L}_1^*:H^2(\set{M}_1)\to L^2(\set{M}_1)$,
becomes
\begin{equation} \label{eq:FP_Ito1}
\begin{aligned}
\cL_1^* F &= \nabla\cdot \left( -\tfrac12 \nabla\cdot\Sigma_1\, F + \tfrac12 \nabla\cdot \left(\Sigma_1 F\right)\right) =
\tfrac12 \nabla\cdot \left( -  \nabla\cdot\Sigma_1\, F +  \nabla\cdot\Sigma_1\, F + \Sigma_1\nabla F\right)\\
&= \tfrac12 \nabla\cdot \left( \Sigma_1\, \nabla F\right),
\end{aligned}
\end{equation}
with homogeneous Neumann boundary conditions both in space and time, corresponding to the reflecting boundary conditions for the SDE.
The subscripts in $\Sigma_1$ and $\cL_1$ indicate that these objects are naturally connected to~$\set{M}_1$, and we use $\cL_1^*$ to denote the Fokker--Planck operator, despite the above operator being self-adjoint.

\begin{proposition}
\label{prop:M1metric}
We have $\cL_1^* = \frac12 \Delta_{G_1}$, where $\Delta_{G_1}$ is the Laplace--Beltrami operator corresponding to the metric $G_1$ given by the metric tensor
\begin{equation}
	\label{eq:Sigma1}
	G_1(\theta,x) = \Sigma_1(\theta,x)^{-1} = \begin{pmatrix}
\frac{1}{a^2} + \frac{1}{\ep^2}v(\theta,x)^\top v(\theta,x)  & -\frac{1}{\ep^2} v(\theta,x)^\top \\[6pt] -\frac{1}{\ep^2} v(\theta,x) & \frac{1}{\ep^2} \mathrm{Id}
\end{pmatrix}
\end{equation}
in the Euclidean coordinates $(\theta,x)$ on $\set{M}_1$.
\end{proposition}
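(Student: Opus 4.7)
The plan is to match the self-adjoint divergence-form operator $\tfrac12\nabla\cdot(\Sigma_1\nabla\cdot)$ obtained in \eqref{eq:FP_Ito1} with the coordinate expression of the Laplace--Beltrami operator $\Delta_{G_1}$, where $G_1=\Sigma_1^{-1}$. The central observation that makes this work cleanly is that $\det\Sigma_1$ is constant in $(\theta,x)$, so no geometric volume factors survive.

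First I would verify the formula \eqref{eq:Sigma1} for $G_1$ directly. Using the block form of $\sigma$ in \eqref{eq:difftimeproc}, one computes
\begin{equation*}
\Sigma_1 = \sigma\sigma^\top = \begin{pmatrix} a^2 & a^2 v^\top\\ a^2 v & a^2 v v^\top + \ep^2\,\mathrm{Id}\end{pmatrix},
\end{equation*}
and a short calculation confirms that $\Sigma_1$ times the matrix in \eqref{eq:Sigma1} yields $\mathrm{Id}_{(d+1)\times(d+1)}$; the cross terms cancel because each row pairs $-v/\ep^2$ against $v/\ep^2$ after multiplication by $a^2$.

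Second, I would recall the local coordinate form of the Laplace--Beltrami operator for a metric with matrix $G_1$ and inverse $G_1^{-1}=\Sigma_1$,
\begin{equation*}
\Delta_{G_1} F \,=\, \frac{1}{\sqrt{\det G_1}}\,\nabla\!\cdot\!\bigl(\sqrt{\det G_1}\;\Sigma_1\,\nabla F\bigr),
\end{equation*}
and then compute $\det\Sigma_1$. The block-determinant formula with the Schur complement of the $(1,1)$-block gives
\begin{equation*}
\det\Sigma_1 \,=\, a^2 \cdot \det\bigl(a^2 vv^\top+\ep^2\,\mathrm{Id}-a^2 v\cdot a^{-2}\cdot a^2 v^\top\bigr) \,=\, a^2\,\ep^{2d},
\end{equation*}
so $\det G_1 = 1/(a^2\ep^{2d})$ is a constant independent of $(\theta,x)$. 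Consequently the density factors cancel and
\begin{equation*}
\Delta_{G_1} F \,=\, \nabla\!\cdot\!(\Sigma_1\,\nabla F),
\end{equation*}
which is exactly $2\mathcal{L}_1^*F$ by \eqref{eq:FP_Ito1}.

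The last step is to check that the function-space settings agree: homogeneous Neumann conditions for $\mathcal{L}_1^*$ (both in the spatial directions on $\partial\phi_t(M)$ and in time at $\theta\in\{0,\tau\}$) correspond, under the metric $G_1$ on $\set{M}_1$, to vanishing conormal derivatives, since the conormal with respect to $G_1$ and the Euclidean conormal coincide after a constant rescaling (again thanks to $\det G_1$ being constant). The only mildly subtle point is the boundary identification: the time faces $\{0,\tau\}\times\phi_t(M)$ and the co-evolved lateral boundary $\bigcup_t\{t\}\times\partial\phi_t(M)$ together form $\partial\set{M}_1$, and both carry reflecting (Neumann) conditions in the SDE sense, which are precisely the natural boundary conditions arising from the variational formulation of $\Delta_{G_1}$. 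Beyond that, the argument is a direct calculation; no estimate or limiting procedure is required.
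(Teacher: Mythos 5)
Your proposal is correct and follows essentially the same route as the paper's proof: both hinge on the constancy of $\det\Sigma_1=a^2\ep^{2d}$ so that the volume factors in the Laplace--Beltrami formula cancel, reducing $\Delta_{G_1}$ to $\nabla\cdot(\Sigma_1\nabla\cdot)=2\cL_1^*$. The only cosmetic differences are that you compute the determinant via a Schur complement where the paper uses $\det\Sigma_1=(\det\sigma)^2$, and you verify \eqref{eq:Sigma1} by direct multiplication where the paper invokes the block-inverse identity \eqref{eq:invMat}; your added remarks on the Neumann boundary conditions are a welcome supplement not present in the paper's proof.
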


\begin{proof}
For a general metric $g$ the Laplace--Beltrami operator is given by
\begin{equation}
\label{eq:LB}
\Delta_{g} f = \frac{1}{\sqrt{|g|}} \nabla\cdot \left(\sqrt{|g|}g^{-1} \nabla f \right),
\end{equation}
where $|g|$ denotes the absolute value of the determinant of the metric tensor, the latter also denoted by~$g$. In our particular situation, from~\eqref{eq:difftimeproc} we have that $\det \sigma =  a \ep^d$, and hence $\det \Sigma_1 = ( \det \sigma)^2 =  a^2 \ep^{2d}$. Since the determinant of $\Sigma_1$ is constant, the volume distortion $\sqrt{|G_1|}$ cancels out and we obtain $\smash{ \Delta_{G_1} F = \nabla\cdot (\Sigma_1\, \nabla F) }$ for a sufficiently smooth function~$F:\set{M}_1 \to \R$. A comparison with \eqref{eq:FP_Ito1} implies the claim. The explicit form can be seen from $\Sigma_1^{-1} = \sigma^{-\top}\sigma^{-1}$, (\ref{eq:difftimeproc}), and the identity
\begin{equation}
\label{eq:invMat}
\begin{pmatrix}
\mathrm{a} & 0^\top \\ \mathrm{v} & \mathrm{J}
\end{pmatrix}^{-1} = \begin{pmatrix}
\mathrm{a}^{-1} & 0^\top \\ -\mathrm{a}^{-1}\mathrm{J}^{-1}\mathrm{v} & \mathrm{J}^{-1}
\end{pmatrix}
\quad\text{for } \mathrm{a}\in\R\setminus \{0\},\mathrm{v}\in\R^d,\mathrm{J}\in\R^{d\times d} \text{ invertible}.
\end{equation}
\end{proof}

The following is an immediate consequence of Proposition \ref{prop:M1metric}. Note that we interchangeably use the metric and its metric tensor expressed in Euclidean coordinates.
\begin{cor} \label{cor:generator1}
The generator of the process $\rv{X}_t$ governed by  \eqref{eq:difftimeproc} on $\mathbb{M}_1$ is given by $\smash{ \cL_1^* = \frac12 \Delta_{\Sigma_1^{-1}} }$. Hence, $\rv{X}_t$ is equivalent in law to the standard Brownian motion on the Riemannian manifold~$(\set{M}_1,G_1)$, where $G_1 = \Sigma_1^{-1}$.
\end{cor}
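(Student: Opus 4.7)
The statement bundles together two assertions: that the infinitesimal (backward) generator of $\rv{X}_t$ is $\tfrac12\Delta_{\Sigma_1^{-1}}$, and that $\rv{X}_t$ agrees in law with the standard Brownian motion on $(\set{M}_1, G_1)$. My plan is to read both off Proposition~\ref{prop:M1metric} using two facts already highlighted in the surrounding text: the forward Kolmogorov (Fokker--Planck) operator of a Markov diffusion is the formal $L^2$-adjoint of its backward infinitesimal generator, and standard Brownian motion on a Riemannian manifold is characterized up to equivalence in law by having one half the Laplace--Beltrami operator as its generator (\cite[Chapter~3]{Hsu02}).

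For the first assertion, I would observe that Proposition~\ref{prop:M1metric} gives $\cL_1^* = \tfrac12 \Delta_{G_1}$ with $G_1 = \Sigma_1^{-1}$, and that $\sqrt{|G_1|}$ is constant in the Euclidean coordinates $(\theta,x)$, since $\det \Sigma_1 = a^2\ep^{2d}$. The operator therefore reduces to $\tfrac12 \nabla\cdot(\Sigma_1 \nabla\,\cdot\,)$, which is manifestly symmetric on $L^2(\set{M}_1)$ under the homogeneous Neumann boundary conditions inherited from the reflecting SDE. Symmetry forces $\cL_1 = \cL_1^* = \tfrac12\Delta_{\Sigma_1^{-1}}$, which is precisely the first assertion of the corollary.

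For the second assertion I would then compare generators on a core of smooth test functions: the backward generator of $\rv{X}_t$ equals $\tfrac12\Delta_{G_1}$ by the first step, while $\tfrac12\Delta_{G_1}$ is by definition the generator of the standard reflected Brownian motion on $(\set{M}_1,G_1)$. The well-posedness of the martingale problem for uniformly elliptic diffusions on a compact Riemannian manifold with reflecting boundary (equivalently, uniqueness in law of the associated Dirichlet form) then yields equivalence in law.

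The only subtlety I expect to have to spell out carefully is the boundary condition matching: one must confirm that the reflecting conditions on $\bigcup_{t\in[0,\tau]}\{t\}\times \partial(\phi_t(M))$ and at the temporal endpoints $\theta=0,\tau$ coincide with the conormal (Neumann) boundary conditions associated with $\Delta_{G_1}$ on $(\set{M}_1,G_1)$. This is geometrically natural because, with respect to $G_1$, the conormal direction at a boundary point agrees with the conormal direction for the diffusion matrix $\Sigma_1$, but it is worth making this identification explicit so that ``generator'' in the two settings really refers to operators with the same domain.
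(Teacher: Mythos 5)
Your proposal is correct and follows essentially the same route as the paper, which treats the corollary as an immediate consequence of Proposition~\ref{prop:M1metric} combined with the self-adjointness of $\tfrac12\nabla\cdot(\Sigma_1\nabla\,\cdot\,)$ (so forward and backward generators coincide) and the fact, cited from \cite[Chapter~3]{Hsu02}, that standard Brownian motion on a Riemannian manifold is characterised in law by having one half the Laplace--Beltrami operator as its generator. Your additional remarks on martingale-problem well-posedness and on matching the reflecting boundary conditions with the conormal Neumann conditions for $\Delta_{G_1}$ are sound elaborations of details the paper leaves implicit.
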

The following diagram summarises the one-to-one relationships we have used between objects associated with $\set{M}_1$, where the first one is merely in law:

\medskip
{

\centering

\begin{tikzpicture}
[auto,font=\footnotesize,
%every node/.style={node distance=4.75cm},
object/.style={
rectangle, thick, draw, fill=black!5, inner sep=5pt, text width=2.8cm, minimum height=24mm%, font=\bfseries\footnotesize\sffamily
}]

% draw objects

\node [object] (sde) {
Stochastic differential equation
\[
d\rv{X}_t = \sigma( \rv{X}_t ) \circ d\rv{B}_t
\]
};

\node [object, right=10mm of sde] (fpe) {
Fokker--Planck (Kolmogorov forward) equation
\[
\!\partial_t F = \tfrac12 \nabla\cdot \left( \Sigma_1\, \nabla F\right)
\]
};

\node [object, right=10mm of fpe] (LB) {
Laplace--Beltrami operator on Riemannian manifold
\[
\Delta_{G_1}=\Delta_{\Sigma_1^{-1}}
\]
};

\node [object, right=10mm of LB] (metric) {
Metric on Riemannian manifold
\[
G_1 = \Sigma_1^{-1}
\]
};

% draw relationships

\draw[implies-implies,double equal sign distance] (sde) -- (fpe) node[above,midway] {\footnotesize in law};
\draw[implies-implies,double equal sign distance] (fpe) -- (LB);
\draw[implies-implies,double equal sign distance] (LB) -- (metric);

\end{tikzpicture}

}
Next we will use the fact that these relationships remain valid under pullback of the respective objects from $\set{M}_1$ to $\set{M}_0$.

%%%
\subsection{The pullback of the time-diffused process on $\set{M}_0$}
\label{ssec:pullback_SDE}

\subsubsection{The pullback process}

A deterministic  trajectory $\{\phi_\theta (x)\}_{0\le \theta\le \tau}$ initialized at $x$ at time $\theta=0$ lies on the curve $\{( \theta,\phi_\theta (x)): 0\le \theta\le \tau\}\subset \set{M}_1$. As these curves are parametrized by their initial conditions in $M$, we will now pull back the time-diffused process \eqref{eq:difftimeproc} to the manifold $\set{M}_0$ consisting of time and initial-condition pairs.
This amounts to describing the process in what is often called the {Lagrangian frame}.

Recall from \eqref{eq:Phi} the canonical mapping $\Phi: (\theta,x) \mapsto (\theta, \phi_\theta(x))$ from $\set{M}_0$ to the trajectory manifold $\set{M}_1$. It has the Jacobian matrix
\begin{equation}
\label{jacmatrix}
D\Phi (\theta,x) = \begin{pmatrix}
1 & 0^\top  \\
v(\theta,\phi_{\theta} x) & J_{\theta}(x)
\end{pmatrix},
\end{equation}
where $J_\theta(x) := \partial_x \left(\phi_\theta(x)\right)$ is a shorthand for the Jacobian matrix of the time-$\theta$-flow map of the deterministic system (i.e., the ODE \eqref{eq:SDE} with~$\ep =0$).

If $\rv{Y}_t := \Phi^{-1}(\rv{X}_t)$ denotes the pulled-back time-diffused process~\eqref{eq:difftimeproc}, then this satisfies the SDE $d\rv{Y}_t = \left(D\Phi^{-1}\right)\!(\rv{X}_t) \sigma(\rv{X}_t)\circ d\rv{B}_t$, since SDEs in Stratonovich form obey the chain rule~\cite[Prop.~3.4]{Pav14}. Using that $\left(D\Phi^{-1}\right)\!(\Phi(Y)) = \left(D\Phi(Y)\right)^{-1}$ and the identity \eqref{eq:invMat} we obtain that
\begin{equation}
\label{eq:AugStratonPullback}
\begin{aligned}
d\rv{Y}_t &= \begin{pmatrix}
1 & 0^\top \\
-J_{\rv{\theta}_t} (\rv{y}_t)^{-1} v(\rv{\theta}_t,\rv{x}_t) &  \, J_{\rv{\theta}_t} (\rv{y}_t)^{-1}
\end{pmatrix}\begin{pmatrix}
a & 0^\top \\
a v(\rv{\theta}_t,\rv{x}_t) &  \ep \, \mathrm{Id}
\end{pmatrix} \circ d\rv{B}_t\\
&= \begin{pmatrix}
 a & 0^\top \\
0 &  \ep \, J_{\rv{\theta}_t} (\rv{y}_t)^{-1}
\end{pmatrix}\circ d\rv{B}_t\,,
\end{aligned}
\end{equation}
with
\[
\rv{Y}_t = \begin{pmatrix}
\rv{\theta}_t\\
\rv{y}_t
\end{pmatrix},
\
\rv{B}_t = \begin{pmatrix}
\rv{w}_t\\ \rv{b}_t
\end{pmatrix}.
\]
Hence, the pullback to $\set{M}_0$ (block-)diagonalises the diffusion coefficient (matrix) of the time-diffused process, so that the independent noise processes $\rv{w}_t$ and $\rv{b}_t$ only influence the time and space parameters $\theta$ and $y$, respectively. This is reflected in the Laplace--Beltrami operator~\eqref{eq:LB_G} below, where there are no mixed time and space-derivatives.

The dynamic interpretation of this process is straightforward. If we pull back the time-diffused process to $\set{M}_0$, where deterministic trajectories are straight lines, then
\begin{itemize}
\item the noise $\rv{w}_t$ drives the diffusion \emph{along} a single trajectory,
\item $\rv{b}_t$ drives the diffusion \emph{between} trajectories, and
\item the influence of the nonlinear dynamics on the diffusion between trajectories is encoded in the appearance of the Jacobian matrices~$J_{\theta}$.
\end{itemize}

\subsubsection{The pullback metric}

To obtain a characterisation of the pulled-back process $\rv{Y}_t$ in law, we will now pull back the metric $G_1$ to $\set{M}_0$ and consider the Brownian motion it generates thereon.

We briefly recall some general facts about pullback metrics. Let $\Psi: M\to N$ be a diffeomorphism between two smooth manifolds, and endow $N$ with a metric~$n$. The pullback metric $\Psi^*n$ on $M$ is defined by $(\Psi^*n)_x(v,w) = n_{\Psi(x)}(D\Psi\, v,D\Psi\, w)$, the subscripts referring to the point at which the metric is evaluated.
If in local coordinates at a point $\Psi(x)\in N$ the metric $n$ is expressed by the metric tensor $\mathcal{N}$, then the metric $\Psi^*n$ is expressed in local coordinates at $x\in M$ by $(D\Psi(x))^\top  \mathcal{N} D\Psi(x)$.

Note that for both $\set{M}_0$ and $\set{M}_1$, local coordinates coincide with the global Euclidean coordinates, and recall that $G_1=\Sigma_1^{-1}$.
With $D\Phi$ in (\ref{jacmatrix}), we obtain by Proposition~\ref{prop:M1metric} and the local coordinate expression for the pullback above that the pullback metric $G_0:= \Phi^* G_1$ with inverse metric tensor~$\Sigma_0$ satisfies
\begin{equation} \label{eq:G_as_pullback}
\begin{aligned}
\Sigma_0(\theta,y)^{-1} :\!\!&= G_0(\theta,y) = \left( \Phi^* G_1 \right) (\theta,y) = D\Phi(\theta,y)^{\top}\Sigma_1(\theta,\phi_{\theta}y)^{-1} D\Phi(\theta,y)\\
&=  \begin{pmatrix}
\frac{1}{ a ^2}  & 0^\top \\[3pt]
0 & \frac{1}{ \ep ^2} J_{\theta}(y)^\top  J_{\theta}(y)
\end{pmatrix}.
\end{aligned}
\end{equation}
We summarize this discussion in Figure~\ref{fig:metrics}.
\begin{figure}[htb]
	\centering
	
	\includegraphics[width = \textwidth]{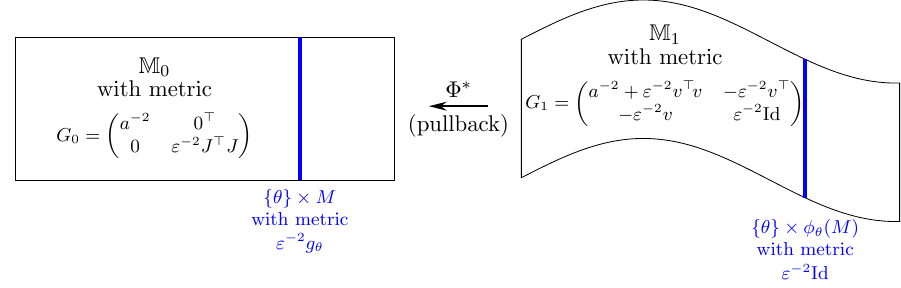}
	\caption{The metric $G_1$ associated with the time-diffused process on $\set{M}_1$ and its pullback~$G_0$ on~$\set{M}_0$. The induced metrics on the $\theta$-time-slices of the manifolds (blue vertical lines) are~$\frac{1}{\ep^2}g_{\theta}$ and $\frac{1}{\ep^2} \mathrm{Id}$, respectively. Note that the pullback decouples the metric---and hence the diffusion coefficient of the process---into purely temporal and spatial components.}
	\label{fig:metrics}
\end{figure}

Recall from the introduction that $g_\theta$ is the pullback of the Euclidean metric by $\phi_\theta$ to~$M$, namely $\phi_\theta^*e$.
In Euclidean coordinates we have~$g_{\theta}(y) = J_{\theta}(y)^\top  J_{\theta}(y)$.
Thus, denoting by $\Delta_{g_\theta}$ the Laplace--Beltrami operator with respect to the metric $g_\theta$ and using~\eqref{eq:LB} and \eqref{eq:G_as_pullback}, the Laplace--Beltrami operator of the Riemannian manifold $(\set{M}_0,G_0)$ is given by
\begin{equation} \label{eq:LB_G}
\Delta_{G_0} F(\theta,\cdot) =  a^2\, \partial_{\theta}^2 F(\theta,\cdot) +  \ep ^2\, \Delta_{g_{\theta}} F(\theta,\cdot) =: 2\cL_0^*F(\theta,\cdot),
\end{equation}
because $\mathrm{det}\,J_{\theta} \equiv 1$. Again, the associated boundary conditions are homogeneous Neumann.
Note that in analogy to the previous subsection and Proposition~\ref{prop:M1metric} we denote in~\eqref{eq:LB_G} the generator of the standard Brownian motion on~$(\set{M}_0, G_0)$ by~$\cL_0^* = \frac12 \Delta_{G_0}$. Similarly to Corollary~\ref{cor:generator1} we obtain:
\begin{cor} \label{cor:generator0}
The generator of the pulled-back time-diffused process $\rv{Y}_t$ governed by \eqref{eq:AugStratonPullback} on $\mathbb{M}_0$ is given by~$\smash{ \cL_0^* = \frac12 \Delta_{\Sigma_0^{-1}} }$. Hence, $\rv{Y}_t$ is equivalent in law to the standard Brownian motion on the Riemannian manifold~$(\set{M}_0, G_0)$, where $G_0 = \Sigma_0^{-1}$.
\end{cor}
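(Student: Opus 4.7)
The plan is to follow the template of Proposition~\ref{prop:M1metric} and Corollary~\ref{cor:generator1}, only now with the pulled-back data. Two routes are available: a direct computation that identifies the Fokker--Planck operator of \eqref{eq:AugStratonPullback} with $\tfrac12 \Delta_{G_0}$, or an isometry shortcut using that $G_0 = \Phi^* G_1$ by construction.

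\emph{Direct route.} From \eqref{eq:AugStratonPullback} the Stratonovich diffusion coefficient of $\rv{Y}_t$ is
\[
\sigma_0(\theta,y) = \begin{pmatrix} a & 0^\top \\ 0 & \ep\, J_\theta(y)^{-1}\end{pmatrix},
\]
so $\Sigma_0 := \sigma_0 \sigma_0^\top$ is the block-diagonal matrix whose inverse is precisely the right-hand side of \eqref{eq:G_as_pullback}. To apply the Stratonovich-to-It\^o conversion of footnote~\ref{foot:ItoStratSDE} I would first verify that $\nabla\cdot \sigma_0^\top \equiv 0$: the first row is constant and contributes nothing, while for the remaining rows the statement reduces to $\sum_k \partial_{y_k}(J_\theta^{-\top})_{ik} = 0$, which is precisely the Piola identity applied to the volume-preserving map $\phi_\theta$ (since $v$ is divergence-free, $\det J_\theta \equiv 1$). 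Consequently, the Stratonovich-to-It\^o drift equals $\tfrac12 \nabla\cdot \Sigma_0$, and footnote~\ref{foot:ItoFPE} gives that the Fokker--Planck operator telescopes, exactly as in \eqref{eq:FP_Ito1}, to $\cL_0^* F = \tfrac12 \nabla\cdot(\Sigma_0 \nabla F)$, with homogeneous Neumann boundary conditions corresponding to the reflection of $\rv{Y}_t$ on $\partial \set{M}_0$. The conversion of this into a Laplace--Beltrami operator proceeds as in the proof of Proposition~\ref{prop:M1metric}: since $\det\sigma_0 = a\ep^d$ is constant the volume distortion $\sqrt{|G_0|}$ cancels out of \eqref{eq:LB}, so $\Delta_{G_0} F = \nabla\cdot(\Sigma_0 \nabla F)$ and hence $\cL_0^* = \tfrac12 \Delta_{G_0} = \tfrac12 \Delta_{\Sigma_0^{-1}}$, consistent with \eqref{eq:LB_G}. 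The equivalence in law with standard Brownian motion on $(\set{M}_0, G_0)$ then follows from the characterisation of Brownian motion on a Riemannian manifold by one half of its Laplace--Beltrami operator as generator, cited from \cite[Chapter~3]{Hsu02}.

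\emph{Isometry shortcut, and main obstacle.} By \eqref{eq:G_as_pullback}, $\Phi: (\set{M}_0, G_0) \to (\set{M}_1, G_1)$ is a Riemannian isometry. Brownian motion on a Riemannian manifold is preserved in law under pullback by isometries (the generator $\tfrac12 \Delta_g$ is natural with respect to isometries), so from $\rv{Y}_t = \Phi^{-1}(\rv{X}_t)$ and Corollary~\ref{cor:generator1} we obtain that $\rv{Y}_t$ is Brownian motion on $(\set{M}_0, G_0)$, with generator $\tfrac12 \Delta_{G_0}$. This bypasses the direct computation entirely. The only nontrivial algebraic input in the direct route is the identity $\nabla\cdot\sigma_0^\top = 0$, which is where the hypothesis that $\phi_\theta$ is volume-preserving enters; every other step is a line-by-line transcription of the proof of Proposition~\ref{prop:M1metric}. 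In writing up the proof I would give the isometry argument as the main one-line proof, and optionally indicate the direct verification as a sanity check making explicit use of the Piola identity.
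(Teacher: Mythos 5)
Your proposal is correct and matches the paper's (implicit) argument: the paper derives the pulled-back SDE \eqref{eq:AugStratonPullback} and the pullback metric \eqref{eq:G_as_pullback} in the preceding text and then obtains the corollary ``similarly to Corollary~\ref{cor:generator1}'' by the naturality of the SDE--Fokker--Planck--Laplace--Beltrami correspondence under pullback, which is exactly your isometry shortcut. Your direct route, including the verification of $\nabla\cdot\sigma_0^\top\equiv 0$ via the Piola identity for the volume-preserving $\phi_\theta$, is a sound supplementary check consistent with \eqref{eq:LB_G}.
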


\begin{remark}
The pullback to the initial time $\theta=0$ is simply for convenience. Let $\phi_{\vartheta,\theta}$ denote the flow of the ODE $\dot{x}_t = v(t,x_t)$ from time $\vartheta$ to time~$\theta$, i.e., $\phi_{\vartheta,\theta} = \phi_{\theta}\circ \phi_{\vartheta}^{-1}$. We note that we could use the manifold of states at any time $\vartheta \in [0,\tau]$, $M_{\vartheta} := \phi_{0,\vartheta}(M)$, to define $\set{M}_{\vartheta} := \bigcup_{\theta\in [0,\tau]} \{\theta\} \times M_{\vartheta}$, and pull back the metric $G_1$ to $\set{M}_{\vartheta}$ using
\[
\Phi_{\vartheta}: (\theta,z) \mapsto (\theta, \phi_{\vartheta,\theta} (z)).
\]
Then the formulas \eqref{eq:G_as_pullback} and \eqref{eq:LB_G} hold with $J_{\theta} = \partial_z\left( \phi_{\vartheta,\theta}(z) \right)$ and $g_{\theta}$ modified accordingly. In particular, the spectra of the Laplace--Beltrami operators $\Delta_{G_0}$ and $\Delta_{\Phi_{\vartheta}^* G_1}$ coincide, and the associated eigenfunctions can be obtained by coordinate transformation with~$z = \phi_{0,\vartheta}(y)$.
\end{remark}

Next we will see how the time-diffused process connects to the currently established notion of coherent sets, and how it extends this notion to semi-material coherent sets.

%%%
\section{The inflated dynamic Laplace operator and connections to the dynamic Laplace operator}\label{sec:dynLapConns}

In the following, the dependence of the metric $G_0$ and of related objects on the parameter $a$ is going to be of central interest. Hence, we will explicitly denote this dependence by writing~$\Goa$.
The parameter $\epsilon$ is the strength of the spatial diffusion;  see (\ref{eqn:fp}) and (\ref{eq:SDE}).
As discussed in the introduction, this is usually taken to be small \cite{F13}, or in the vanishing limit when constructing the dynamic Laplace operator $\Delta^D$~\cite{Fro15}.
Our inflated dynamic Laplace operator \eqref{eq:LB_G} corresponding to the SDE \eqref{eq:AugStratonPullback} has the additional parameter~$a$.
In this section we will show that for any fixed $\epsilon>0$, in the limit as $a\to\infty$ the properties of $\Delta_{G_{0,a}}$ mimic those of~$\Delta^D$.
Therefore, for simplicity from now on we almost exclusively set $\epsilon=1$ and retain only the parameter~$a$.

%%%
\subsection{The dynamic Laplace operator}
\label{ssec:dynLap}

The dynamic Laplace operator $\Delta^D$ in (\ref{DL}) arose from the desire to find (material) coherent sets~\cite{Fro15} using purely geometric constructions.
Coherent sets $A\subset M$ maximally inhibit mixing due to their boundary size remaining small under evolution by $\phi_t$ relative to enclosed volume.
An evolving boundary remaining small is a common measure of advective mixing \cite{pierrehumbert91,ottino} because in the presence of small diffusion the dispersion of mass outside $\phi_t(A)$ is proportional to the boundary size of $\phi_t(A)$.
This small evolving boundary geometry has been encoded in statements for a new dynamic isperimetric theory \cite{Fro15}.
In particular, the classical Cheeger inequality, which relates the geometry of a Riemannian manifold to the first nontrivial eigenvalue of its Laplace--Beltrami operator \cite{cheeger70}, and the classical Federer--Fleming theorem \cite{FF}, which equates the Cheeger constant and the Sobolev constant, were each extended to the dynamic situation in \cite{Fro15,FrKw17} and applied to defining and discovering finite-time coherent sets.
One aspect we will repeatedly use is that leading nontrivial eigenvalues of both $\Delta^D$ and $\Delta_{G_{0,a}}$ are strongly connected with coherence in our dynamical system.
Links between these dynamic isoperimetric quantities and the spectrum of $\Delta^D$, and their counterparts in our time-expanded geometry will be taken up in this section.

%%%
\subsection{The dynamic Laplace operator as the averaging-limit}
\label{ssec:avging}

We show that the dynamic Laplace operator $\Delta^D$ arises from $\Delta_{\Goa}$ in the limit of infinite temporal diffusion.
The intuition from the metric point of view is to note that as $a\to\infty$ the distance according to $G_{0,a}$ between two points $(\theta_1,x)$ and $(\theta_2,x)$ on the same deterministic trajectory in $\set{M}_0$ goes to zero;  see the left-hand image in Figure~\ref{fig:metrics}.
Therefore, in the $a\to\infty$ limit the importance of temporal displacements vanishes and all relevant information is captured by the temporal average (\ref{DL}) defining $\Delta^D$.

To demonstrate this convergence formally we consider the statistical properties of the process \eqref{eq:AugStratonPullback} in the $ a\to \infty$ limit.
To this end recall that the generator~$\cL_0^*$ of \eqref{eq:AugStratonPullback} is the half Laplace--Beltrami operator~$\Delta_{\Goa}$ from~\eqref{eq:LB_G}.
 %cf.~\eqref{eq:G_as_pullback} and~\eqref{eq:LB_G}.
Using~\eqref{foot:ItoFPE}, we see that an SDE in It{\^o} form having generator~$\cL_0^*$~\eqref{eq:LB_G} is given by
\begin{equation} \label{eq:AugIto2slowtime}
\begin{aligned}
d\rv{\theta}_t &=  a\, d\rv{w}_t, \\
d\rv{y}_t &= \frac{1}{2} \nabla_y\cdot \left( J_{\rv{\theta}_t}^{-1} J_{\rv{\theta}_t}^{-\top} \right)(\rv{y}_t)\,dt + J_{\rv{\theta}_t}(\rv{y}_t)^{-1}\,d\rv{b}_t,
\end{aligned}
\end{equation}
%as a straightforward calculation using~\eqref{foot:ItoFPE} shows.
The temporal process $\rv{\theta}_t$ is much faster than the spatial process when $ a\gg 1$, and thus the results of averaging can be applied.
More precisely, it follows from \cite[Remark 10.2, Section 10.7, Section 17.4]{PaSt08} that the slow process $\rv{y}_t$ converges weakly (i.e., in distribution) to the averaged process
\begin{equation} \label{eq:avgprocess}
d\bar{\rv{y}}_t = \bar{v}(\bar{\rv{y}}_t)\,dt + \bar{\sigma}(\bar{\rv{y}}_t)\,d\rv{b}_t,
\end{equation}
as $ a\to \infty$, where $\bar{v} := \frac12 \nabla\cdot \bar{\Sigma}$,  $\bar{\sigma}$ is any matrix-valued function satisfying~$ \bar{\sigma}\bar{\sigma}^\top  = \bar{\Sigma}$, and
\begin{equation}
\label{barSigma}
\bar{\Sigma}(y) := \frac{1}{\tau}\int_0^{\tau}  \left( J_{\theta}^{-1} J_{\theta}^{-\top} \right) (y)\,d\theta.
\end{equation}
%(integration with respect to $\theta$ and differentiation with respect to $y$ can be interchanged)
Note that $\bar{\Sigma}$ is symmetric and positive definite as an integral of symmetric positive-definite matrices, thus one can find a (nonunique) $\bar{\sigma}$ such that~$ \bar{\sigma}\bar{\sigma}^\top  = \bar{\Sigma}$.

%In words, the slow process ``feels'' the averaged effect of the fast variable $\rv{\theta}_t$, where the ergodic average is taken with respect to the stationary distribution of the fast process---in our case, the uniform distribution on~$[0,\tau]$.
We see by~\eqref{foot:ItoFPE} that the limiting slow process $\bar{\rv{y}}_t$ has the forward generator $\bar{\cL}^*:H^2(M)\to L^2(M)$ given by
\[
\bar{\cL}^* f = \nabla\cdot\Big( (-\bar v + \underbrace{\tfrac12 \nabla\cdot \bar\Sigma}_{=\bar{v}} ) f + \tfrac12 \bar\Sigma\,  \nabla f \Big)   = \frac12 \nabla \cdot \left( \bar{\Sigma}\, \nabla f\right),
\]
which does not depend on the particular choice of~$\bar{\sigma}$.
Recall that $g_\theta(y)$ has matrix representation $J_{\theta}^{-1} J_{\theta}^{-\top}(y)$, so by (\ref{eq:LB}) we have $\Delta_{g_{\theta}} f = \nabla \cdot \big( J_{\theta}^{-1} J_{\theta}^{-\top}\,\nabla f \big)$.
We can now immediately see from~\eqref{DL} and (\ref{barSigma}) that $\frac12 \Delta^D = \bar{\cL}^*$.
In other words, the dynamic Laplace operator is (twice) the generator of the dominant spatial process $\rv{y}_t$ in (\ref{eq:AugIto2slowtime}) as~$a \to \infty$.
The expression for $\bar{\Sigma}$ appears as a harmonic mean of the metrics $g_t$ in \cite{KK};  here it arises naturally in the limit of speeding up time in the temporal diffusion.

Let us briefly re-introduce the parameter $\ep>0$ into \eqref{eq:AugIto2slowtime} to compare the two situations as at the end of section~\ref{sect:diff-time}, where we first considered the the limit $\ep\to 0$ for fixed $a>0$, and then the limit $a\to\infty$ for fixed $\ep>0$:
\begin{align*}
	d\rv{\theta}_t &=  a\, d\rv{w}_t, \\
d\rv{y}_t &= \frac{\ep^2}{2}\, \nabla_y\cdot \left( J_{\rv{\theta}_t}^{-1} J_{\rv{\theta}_t}^{-\top} \right)(\rv{y}_t)\,dt + \ep J_{\rv{\theta}_t}(\rv{y}_t)^{-1}\,d\rv{b}_t\,.
\end{align*}
This SDE is in law equivalent to the pullback of~\eqref{eq:difftimeproc}.
The theory of averaging~\cite{PaSt08} allows one to draw an equivalence between these two limits: the limiting evolution of the (slow) $y$-component is in law governed by~\eqref{eq:avgprocess} in both cases, for $\ep\to 0$ on the timescale $t = \mathcal{O}(\ep^{-2})$, and for $a\to\infty$ on the timescale $t = \mathcal{O}(1)$, respectively.
Hence, the two situations can be transformed into one another via a suitable rescaling of time.
We remark that $\Delta^D$ can also be obtained \cite{KaSch21} as the leading-order term for $\ep\to 0$ in the Fokker--Planck equation~\eqref{eqn:fp} viewed in Lagrangian coordinates~\cite{thiffeault2003advection} (i.e.\ where all times $t\in[0,\tau]$ are pulled back to $t=0$), using operator-averaging techniques adapted from~\cite{Krol91}.

%%%
\subsection{Cheeger and Sobolev inequalities}
\label{ssec:cheeger}

We now begin to analyse the Laplace--Beltrami operator $\Delta_{\Goa}$ and the associated metric~$\Goa$.
In the sequel we rarely consider time-evolution of the associated augmented process~\eqref{eq:AugStratonPullback}, and so we revert to denoting the temporal coordinate of the augmented manifold~$\set{M}_0$ by~$t$ (instead of~$\theta$), to stress its temporal character.
The purpose of this section is to link the Cheeger and Sobolev constants of our time-augmented manifold with the dynamic Cheeger and dynamic Sobolev constants of \cite{Fro15,FrKw17}.

Suppose that $\Gamma\subset M$ is a co-dimension 1 $C^\infty$ surface disconnecting $M$ into the disjoint union $M=A_1\cup\Gamma\cup A_2$, with $A_1, A_2$ connected submanifolds.
\new{Let $\ell$ denote Lebesgue measure on $M$,} let $\iota:\Gamma\xhookrightarrow{}M$ denote the inclusion map, $\iota^*g_t$ the induced metric on $\Gamma$ arising from $g_t$, and $V_{\iota^*g_t}$ the corresponding volume form on $\Gamma$.
Recall the dynamic Cheeger constant \cite[equation (20)]{Fro15} or \cite[equations (3.6)--(3.7)]{FrKw17}, which we can write as
\begin{equation}
\label{dynche}
h^D:=\inf_\Gamma\frac{\frac{1}{\tau}\int_0^\tau V_{\iota^*g_t}(\Gamma)\ dt}{\min\{\ell(A_1),\ell(A_2)\}}.
\end{equation}
In the expression~(\ref{dynche}), we select a $\Gamma$ disconnecting $M$ and follow its forward evolution under the nonlinear dynamics~$\phi_t$.
We wish to find the initial disconnector $\Gamma$ whose average evolved size is least, relative to the volumes of the two connected components of $M$, as this represents the potential boundary of a finite-time coherent set.
The dynamic Cheeger inequality \cite{Fro15,FrKw17} states that
\begin{equation}
\label{dynamiccheeger}
h^D\le 2\sqrt{-\lambda^D_2},
\end{equation}
where $\lambda^D_2$ is the first nontrivial eigenvalue of $\Delta^D$.

To motivate the next construction, we note the evolution of any surface $\Gamma$ in (\ref{dynche}) that disconnects $M$ will trace out a surface $\bbGamma':=\bigcup_{t\in [0,\tau]} (\{t\} \times \phi_t\Gamma)$ that disconnects our ``forward-evolved'' time-augmented manifold~$\mathbb{M}_1$.
With $\Phi$ from~\eqref{eq:Phi} we may pull back such a traced-out surface to obtain a surface $\bbGamma=\Phi^{-1} \bbGamma'$ disconnecting~$\mathbb{M}_0$.
Of course by the construction of $\bbGamma'$ and the definition of $\Phi$, the surface $\bbGamma$ has a constant section on each time fibre, namely~$\Gamma$.
That is, $\bbGamma\cap (\{t\}\times M)=\Gamma$ for each~$t\in [0,\tau]$.
In summary, the minimising disconnector $\Gamma$ from (\ref{dynche}) provides a particular ``constant-in-time'' disconnector $\bbGamma$ of our augmented Riemannian manifold~$(\mathbb{M}_0,G_{a})$.
However, we may also consider more general disconnectors of $(\mathbb{M}_0,G_{a})$, which would represent a relaxation of materialness in the standard definition of coherent sets~\cite{Fro15}.
This can be accomplished using the standard Cheeger constant for the manifold $(\mathbb{M}_0,G_{a})$ for suitable~$a$.
Let $\mathbb{\incmap}:\bbGamma\xhookrightarrow{}\set{M}_0$ denote the inclusion map, $\mathbb{\incmap}^*\Goa$ the induced metric on $\bbGamma$, and $V_{\mathbb{\incmap}^*\Goa}$ the corresponding volume form on~$\bbGamma$. Set
\begin{equation}
\label{augche}
H_a:=\inf_{\footnotesize\bbGamma}\frac{V_{\mathbb{\incmap}^*\Goa}(\bbGamma)}{\min\{V_{\Goa}(\mathbb{A}_1),V_{\Goa}(\mathbb{A}_2)\}},
\end{equation}
where $\bbGamma\subset \mathbb{M}_0$ is a co-dimension 1 $C^\infty$ surface disconnecting $\mathbb{M}_0$ into the disjoint union $\mathbb{M}_0=\mathbb{A}_1\cup\bbGamma\cup \mathbb{A}_2$, with $\mathbb{A}_1, \mathbb{A}_2$ connected submanifolds.
Such a $\bbGamma$ is a potential boundary of a semi-material coherent set with parameter $a$.
The standard Cheeger inequality in this situation reads
\begin{equation}
\label{cheegeraug}
H_a\le 2\sqrt{-\Lambda_2},
\end{equation}
where $\Lambda_2$ is the first nontrivial eigenvalue of $\Delta_{G_{0,a}}$.
We will shortly address the relationship between $h^D$ and $H_a$, and the behaviour of $H_a$ with increasing $a$.

Similarly, we recall the dynamic Sobolev constant \cite[equation (21)]{Fro15} or \cite[\S3.2]{FrKw17}:
\begin{equation}
\label{dynsob}
s^D:=\inf_{f\in C^\infty(M)} \frac{\frac{1}{\tau}\int_0^\tau \int_M \|\nabla_{g_t}f\|_{g_t}\ d\ell dt}{\inf_{\alpha\in\mathbb{R}}\int_M |f-\alpha|\ d\ell},
\end{equation}
where $\nabla_g f$ is the unique vector field on $M$ satisfying
\begin{equation}
\label{eq:grad}
g(\nabla_g f,w)=w(f)
\end{equation}
for all vector fields $w:M\to\mathbb{R}^d$, and $w(f)$ is the Lie derivative of $f:M\to\mathbb{R}$.
Let us \new{interpret} these definitions in coordinates.
In coordinates, $w(f)(x)=\sum_{i=1}^d w_i(x)\cdot \frac{\partial f}{\partial x_i}(x)$, the directional derivative at $x$ of $f$ in the direction $w(x)$;  we will denote by $[\partial f(x)]$ the matrix of partial derivatives evaluated at $x$.
We will occasionally, but not universally, use square brackets around objects like $\partial f$, $g$, and $G$ to emphasise that an object is to be interpreted as a matrix.
The expression $g(\nabla_g f,w)(x)$ in coordinates is $\nabla_g f(x)^\top [g(x)]w(x)$, where $[g(x)]$ is the coordinate matrix representation of $g$ at $x\in M$.
Thus, in coordinates \eqref{eq:grad} becomes $\nabla_g f(x)^\top [g(x)]w(x) = [\partial f(x)]^\top w(x)$.
Since this holds for all vector fields $w$ we have
\begin{equation}
\label{eq:grad2}
\nabla_g f(x)=[g(x)]^{-1} \partial f (x),
\end{equation}
 using symmetry and invertibility of $[g(x)]$.
Finally we have
\begin{eqnarray}
\nonumber \|\nabla_{g}f(x)\|^2_{g} & = & g(\nabla_g f(x),\nabla_g f(x)) \\
\nonumber & = & \nabla_g f(x)^\top [g(x)] \nabla_g f(x) \\
\label{eq:normgrad}
& = & [\partial f(x)]^\top [g(x)^{-1}] [\partial f(x)]\qquad\mbox{by (\ref{eq:grad2})}.
\end{eqnarray}
We may also define a Sobolev constant for the Riemannian manifold $(\mathbb{M}_0,\Goa)$ in the usual way:
\begin{equation}
\label{augsob}
S_a:=\inf_{F\in C^\infty(\mathbb{M}_0)} \frac{\int_{\mathbb{M}_0} \|\nabla_{\Goa}F\|_{\Goa}\ dV_{\Goa}}{\inf_{\alpha\in\mathbb{R}}\int_{\mathbb{M}_0} |F-\alpha|\ dV_{\Goa}},
\end{equation}
where $V_{\Goa}$ denotes the volume measure with respect to~$\Goa$. Note that by~\eqref{eq:G_as_pullback} we have~$dV_{\Goa} = \frac{1}{a} dt\,d\ell$.

\begin{proposition}
\label{isoperprop}
One has
\begin{enumerate}
\item $H_a=S_a\le s^D=h^D$ for all $a\ge 0$,
\item $H_a$ and $S_a$ are nondecreasing in $a\ge 0$.
%\item  $\lim_{a\to\infty}H_a=h^D$ and $\lim_{a\to\infty}S_a=s^D$.
\end{enumerate}
\end{proposition}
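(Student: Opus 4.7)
The plan is to route everything through the Federer--Fleming equivalences, which reduce both parts of the proposition to statements about the Sobolev constant $S_a$. The equality $h^D = s^D$ is the dynamic Federer--Fleming theorem established in \cite{FrKw17}, while $H_a = S_a$ is the classical Federer--Fleming theorem \cite{FF} applied to the Riemannian manifold $(\mathbb{M}_0, G_{0,a})$. With both identifications in hand, part 1 reduces to the single inequality $S_a \le s^D$, and the monotonicity assertion in part 2 reduces to monotonicity of $S_a$ alone.

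For the comparison $S_a \le s^D$, I would plug time-independent test functions $F(t,x) := f(x)$ with $f \in C^\infty(M)$ into \eqref{augsob}. Because $\partial_t F \equiv 0$, the block-diagonal form of $\Sigma_0$ in \eqref{eq:G_as_pullback} (with $\epsilon = 1$) yields
\[
\|\nabla_{G_{0,a}} F(t,x)\|_{G_{0,a}}^2 = [\partial_x f(x)]^\top [g_t(x)]^{-1} [\partial_x f(x)] = \|\nabla_{g_t} f(x)\|_{g_t}^2 .
\]
The common factor $dV_{G_{0,a}} = \tfrac{1}{a}\, dt\, d\ell$ then cancels between numerator and denominator, and integration of $|f-\alpha|$ over $t\in[0,\tau]$ contributes a factor of $\tau$ in the denominator. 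The resulting quotient is precisely the $s^D$-quotient \eqref{dynsob} evaluated at $f$. Since the infimum in \eqref{augsob} runs over the strictly larger class $C^\infty(\mathbb{M}_0)$, we conclude $S_a \le s^D$.

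For monotonicity, the general gradient computation gives
\[
\|\nabla_{G_{0,a}} F\|_{G_{0,a}}^2 = a^2 (\partial_t F)^2 + \|\nabla_{g_t} F(t,\cdot)\|_{g_t}^2 ,
\]
and once more the factor $\tfrac{1}{a}$ in the volume form cancels in the ratio defining $S_a$, leaving
\[
S_a = \inf_{F} \frac{\int_0^\tau\!\!\int_M \sqrt{a^2 (\partial_t F)^2 + \|\nabla_{g_t} F(t,\cdot)\|_{g_t}^2}\, d\ell\, dt}{\inf_\alpha \int_0^\tau\!\!\int_M |F-\alpha|\, d\ell\, dt}.
\]
For each fixed $F$ the numerator is pointwise nondecreasing in $a$ while the denominator is independent of $a$, so the ratio is nondecreasing in $a$; infimising over $F$ preserves this monotonicity. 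Monotonicity of $H_a$ then follows immediately from $H_a = S_a$.

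The main point requiring care is the classical Federer--Fleming equivalence on $(\mathbb{M}_0, G_{0,a})$, which is a compact Riemannian manifold with boundary (with corners, in fact, at the temporal endpoints). One must check that the version being quoted is compatible with the mean-deviation denominator $\inf_\alpha \int |F-\alpha|$ and with hypersurfaces $\bbGamma$ allowed to meet $\partial \mathbb{M}_0$. This is precisely the convention already in force for the dynamic versions in \cite{Fro15,FrKw17}, so no new analytic ingredients are needed; everything else reduces to direct substitution.
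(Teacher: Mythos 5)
Your proposal is correct and follows essentially the same route as the paper: both parts are reduced via the classical and dynamic Federer--Fleming theorems to statements about $S_a$, the inequality $S_a\le s^D$ is obtained by testing \eqref{augsob} on time-independent functions $F(t,x)=f(x)$ (the paper phrases this with near-minimisers $f_\epsilon$, you phrase it as restriction of the infimum to a subclass, which is the same argument), and monotonicity follows from the cancellation of the $\tfrac1a$ volume factor and the pointwise monotonicity of $\sqrt{a^2(\partial_tF)^2+\|\nabla_{g_t}F\|_{g_t}^2}$ in $a$. Your explicit cancellation of $\tfrac1a$ before invoking monotonicity of the integrand is, if anything, slightly cleaner than the paper's wording.
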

\begin{proof}
\,
\begin{enumerate}
\item The fact that $H_a=S_a$ follows from the Federer--Fleming Theorem (e.g.\ \cite[p.~131]{chavel_eigenvalues}).  The fact that $s^D=h^D$ follows from the dynamic Federer--Fleming Theorem (Theorem 3.1 \cite{Fro15}, Theorem 3.3 \cite{FrKw17}).

    We now treat the inequality $S_a\le s^D$.
    For $f\in C^\infty(M)$, denote by $s^D(f)$ the infimand of (\ref{dynsob}).
For $\epsilon>0$ let $f_\epsilon\in C^\infty(M)$ be such that $s^D(f_\epsilon)\le s^D+\epsilon$.
Define $F_\epsilon\in C^\infty(\mathbb{M}_0)$ by $F_\epsilon(t,x)=f_\epsilon(x)$ for $(t,x)\in \mathbb{M}_0$.
Then
\begin{eqnarray*}
S_a(F_\epsilon) &:=& \frac{\int_{\mathbb{M}_0} \|\nabla_{\Goa}F_\epsilon \|_{\Goa}\ dV_{\Goa}}{\inf_{\alpha\in\mathbb{R}}\int_{\mathbb{M}_0} |F_\epsilon-\alpha|\ dV_{\Goa}}\\
&=&\frac{\int_{[0,\tau]}\int_M \left( a^2(\partial_tF_\epsilon)^2+[\partial_xF_\epsilon]^\top(J_t^\top J_t)^{-1}[\partial_x F_\epsilon] \right)^{1/2}\, \frac{1}{a}dt\ d\ell}{\inf_{\alpha\in\mathbb{R}}\int_{[0,\tau]}\int_M |F_\epsilon-\alpha|\ \frac{1}{a}dt\ d\ell}\\
&=&\frac{\frac{1}{\tau}\int_{[0,\tau]}\int_M \left([\partial_xF_\epsilon]^\top(J_t^\top J_t)^{-1}[\partial_x F_\epsilon] \right)^{1/2}\, dt\ d\ell}{\frac{1}{\tau}\left(\int_{[0,\tau]}\ dt \right) \inf_{\alpha\in\mathbb{R}}\int_M |F_\epsilon-\alpha|\  d\ell}\\
&=&s^D(f_\epsilon)\le s^D+\epsilon.
\end{eqnarray*}
Since $\epsilon$ was arbitrary, this implies~$S_a=\inf_{F\in C^\infty(\mathbb{M}_0)} S_a(F)\le s^D$.
\item By (\ref{eq:G_as_pullback}), (\ref{eq:normgrad}), and (\ref{augsob}) one has
\[
S_a=\inf_{F\in C^\infty(\mathbb{M}_0)} \frac{\int_{[0,\tau]}\int_M \left( a^2(\partial_tF)^2+[\partial_xF]^\top(J_t^\top J_t)^{-1}[\partial_x F] \right)^{1/2}\, \frac{1}{a}dt\ d\ell}{\inf_{\alpha\in\mathbb{R}}\int_{[0,\tau]}\int_M |F-\alpha|\ \frac{1}{a}dt\ d\ell}.
\]
The result follows by noting that the integrand is nondecreasing in~$a$.
By the equality $H_a=S_a$ in Part 1 we obtain that $H_a$ is nondecreasing in~$a$.
\end{enumerate}
\end{proof}

\begin{remark}
\label{cheegerremark}
The values $H_a$ and $S_a$ quantify the maximum level of coherence present:  low $H_a=S_a$ indicates strong coherence.
Proposition \ref{isoperprop} says that increasing $a$ leads to greater boundary lengths relative to volume on $\set{M}_0$ and therefore lower coherence.
Referring to (\ref{eq:G_as_pullback}), with increasing $a$, the numerator in (\ref{augche}) can be reduced by aligning the tangent spaces of $\bbGamma$ with the time axis (recall we are always working in $\set{M}_0$).
Thus, as one increases $a$, we expect the minimising $\bbGamma$ to become increasingly material;   for example, in the lower left panel of Figure \ref{fig:topsum}) the boundary of the pale blue set  will become more horizontal.
In summary, there is a trade-off between materiality and coherence, with the former increasing and the latter decreasing with increasing $a$.
\end{remark}

\subsection{Spectrum and eigenfunctions}
\label{ssec:eigenvalues}

The spectrum of the dynamic Laplace operator and our proposed inflated dynamic Laplace operator characterises the strength of coherence and suggests natural numbers of coherent sets.
Eigenvalues near to zero indicate the presence of strong coherence, and their corresponding eigenfunctions encode the location of coherent sets in the phase space.

\subsubsection{Spectrum of the dynamic Laplace operator}

We begin by recalling the variational characterisation of eigenvalues of the dynamic Laplacian and then link these to our inflated dynamic Laplace operator $\Delta_{\Goa}$ on augmented space.
We consider the dynamic Laplacian $\Delta^D$ as defined in \cite[equation (28)]{Fro15} and~\cite[equation (4.12)]{FrKw17}.
The boundary condition on $M$ is the natural one for the dynamic Laplacian and corresponds to a ``dynamic Neumann boundary condition''; see \cite[equation (30)]{Fro15} for an explicit representation.

By \cite[Theorem 4.1]{Fro15} and \cite[Remark 4.2]{Fro15} or \cite[Theorem 4.4]{FrKw17} and the discussion in \cite[Section 4.2 ]{FrKw17}, the dynamic Laplacian has a countable discrete spectrum $0=\lambda^D_1>\lambda^D_2\ge\lambda^D_3\cdots$ with the corresponding eigenfunctions denoted~$\mathbf{1}_M \equiv f_1,f_2,\ldots \in C^\infty(M)$.
Let $S_0=L^2(M)$ and for $k\ge 1$ let
$S_{k}=\{f\in L^2(M):\langle f,f_i\rangle=0, 1\le i\le k\}$.
By \cite[equation (34)]{Fro15},
%or \cite[equation (4.17)]{FrKw17},
one has the following variational representation of $\lambda^D_k$ for $k\ge 1$:
\begin{equation}
\label{eq:dl-lamk}
\lambda^D_k
%=-\max_{S\subset H^1({M}), \dim S=k-1}\inf_{f:M\to\mathbb{R}, f\in S^\perp} \frac{\int_0^\tau \int_M \|\nabla_{g_t}f\|^2_{g_t}\ d\ell(x)dt}{\int_0^\tau\int_M f^2\ d\ell(x)dt}\\
=-\inf_{f:M\to\mathbb{R}, f\in S_{k-1}} \frac{\int_0^\tau \int_M \|\nabla_{g_t}f(x)\|^2_{g_t}\ d\ell(x) \, dt}{\int_0^\tau\int_M f(x)^2\ d\ell(x) \,dt}.
\end{equation}

\subsubsection{Spectrum of the inflated dynamic Laplace operator}
\label{sec:specaug}
%\paragraph{Distinction between modes}
We recall that $\Delta_{\Goa}$ is  equipped with homogeneous Neumann boundary conditions.
By standard theory (e.g.\ \cite{lablee}) $\Delta_{\Goa}$ has a discrete spectrum $0=\Lambda_{1,a}>\Lambda_{2,a}\ge\Lambda_{3,a}\cdots$ with eigenfunctions $\mathbf{1}_{\set{M}_0} \equiv F_1,F_2,\ldots\in C^\infty(\mathbb{M}_0)$.
Some of the eigenfunctions are easily identifiable: for $k\ge 1$, the functions $F_k^\temp(t,\cdot) := \cos ( k\pi t/\tau)$ are clearly eigenfunctions with eigenvalue $\smash{ \Lambda_{k,a}^{\mathrm{temp}} := -(a\pi k / \tau)^2 }$.
We call these eigenfunctions \emph{temporal modes} or \emph{temporal eigenfunctions} because they are constant in space and vary only in time.
Define $W_0=\{f\in L^2([0,\tau]): \int_0^\tau f(t)\, dt=0\} = \mathbf{1}_{[0,\tau]}^{\perp}$ and $\mathbb{S}_0^\temp:=\left\{ f\mathbf{1}_M \, : \, f\in W_0 \right\} \subset L^2(\set{M}_0)$,  a subspace containing all temporal eigenfunctions.

The operator $\Delta_{\Goa}$ is symmetric on its domain in $L^2(\set{M}_0)$ and so its eigenfunctions are $L^2$-orthogonal.
Therefore if $F \in L^2(\set{M}_0)$ is a non-temporal eigenfunction, then $F \perp \big( (t,x) \mapsto \cos (k\pi t/\tau) \big)$ for all $k\ge 1$.
As the temporal eigenfunctions are dense in $\mathbb{S}_0^\temp$, we have that $F$ is orthogonal to every function in $\mathbb{S}_0^\temp$. One thus has $0 = \int_{\set{M}_0}F\, f\mathbf{1}_M\, d\ell dt = \int_0^{\tau} f(t) \int_M F(t,\cdot)\,d\ell\, dt$ for all $f\in W_0$, giving $\int_M F(\cdot,x)\,d\ell(x) \in W_0^{\perp} = \mathrm{span}( \mathbf{1}_{[0,\tau]} )$, which implies that $F$ has constant spatial means:
\begin{equation}
\label{eq:spatial_meanzero}
\int_M F(\cdot,x)\,d\ell(x) = \text{const} \text{ a.e.\ on } [0,\tau].
\end{equation}
We denote the subspace of all such functions by $\mathbb{S}_0^\spat=(\mathbb{S}_0^\temp)^\perp$.
The non-temporal eigenfunctions will be called \emph{spatial eigenfunctions} or \emph{spatial modes}. In general, they will vary both in space and in time. The associated eigenvalues will be denoted by $0=\Lambda_{1,a}^{\spat}>\Lambda_{2,a}^{\spat}\ge \Lambda_{3,a}^{\spat}\cdots$. The spatial and temporal eigenvalues partition the spectrum $\sigma(\Delta_{\Goa})$.
By~\eqref{eq:spatial_meanzero}, an eigenfunction $F$ is a spatial mode if and only if its spatial mean $t\mapsto \int_M F(t,\cdot)\,d\ell$ is an a.e.\ constant function of time.
We will later use this distinct behavior to numerically distinguish between temporal and spatial modes.

\subsubsection{Behaviour of the spectrum of $\Delta_{\Goa}$ with increasing $a$}
We next address the behavior of the eigenvalues of $\Delta_{\Goa}$ with increasing $a>0$, linking them to the eigenvalues of the dynamic Laplace operator.
Let $\mathbb{S}_0=L^2(\mathbb{M}_0)$ and for $k\ge 1$ let $\mathbb{S}_{k}=\{F\in L^2(\mathbb{M}_0):\langle F,F_i\rangle=0, 1\le i\le k\}$.
For $k\ge 1$, one has the standard variational characterisation of eigenvalues of Laplace--Beltrami operators (recall that the volume form $V_{\Goa}$ is given by $dV_{\Goa} = \frac{1}{a}d\ell$)
\begin{equation}
\label{eq:LB-lamk}
\Lambda_{k,a}=-\inf_{F:\mathbb{M}_0\to\mathbb{R}, F\in \mathbb{S}_{k-1}} \frac{\int_{\mathbb{M}_0} \left\|\nabla_{\Goa}F\right\|^2_{\Goa}\ dV_{\Goa}}{\int_{\mathbb{M}_0} F^2\ dV_{\Goa}}.
\end{equation}
Further, denoting the eigenfunction corresponding to $\Lambda^\spat_{k,a}$ by $F^\spat_{k}$ let us denote $\mathbb{S}^\spat_k=\{F \in \mathbb{S}_0^\spat:\langle F,F^\spat_i\rangle=0, 1\le i\le k\}$.
We then have the variational characterisation of spatial eigenfunctions:
\begin{equation}
\label{eq:LB-lamkspat}
\Lambda_{k,a}^\spat
%-\max_{\mathbb{S}\subset H^1(\mathbb{M}_0)\cap \mathbb{S}_0^\spat, \dim \mathbb{S}=k-1}\inf_{F:\mathbb{M}_0\to\mathbb{R}, F\in \mathbb{S}^\perp} \frac{\int_{\mathbb{M}_0} \left\|\nabla_{G}F(X)\right\|^2_{G_{a}}\ d\ell(X)}{\int_{\mathbb{M}_0} F(X)^2\ d\ell(X)}\\
=-\inf_{F:\mathbb{M}_0\to\mathbb{R}, F\in \mathbb{S}_{k-1}^\spat} \frac{\int_{\mathbb{M}_0} \left\|\nabla_{\Goa}F\right\|^2_{\Goa}\ dV_{\Goa}}{\int_{\mathbb{M}_0} F^2\ dV_{\Goa}}.
\end{equation}

Recall from \eqref{eq:G_as_pullback} that the matrix representation of $\Goa$ is
\begin{equation}
\label{eq:Gmat}
[\Goa(t,x)]=\left(
         \begin{array}{cc}
           1/a^2 & 0 \\
         0 & g_t(x) \\
         \end{array}
       \right).
\end{equation}
Thus, using (\ref{eq:normgrad}) we have
\begin{eqnarray}
\nonumber\|\nabla_{\Goa}F(t,x)\|^2_{\Goa}&=&[\partial F(t,x)]^\top[\Goa(t,x)]^{-1}[\partial F(t,x)]\\
\nonumber&=&a^2(\partial_t F(t,x))^2 + [\partial_x F(t,x)]^\top[g_t(x)]^{-1}[\partial_x F(t,x)]\\
\label{eq:Gdecomp}&=&a^2(\partial_t F(t,x))^2 + \|\nabla_{g_t}F(t,x)\|^2_{g_t}\,.
\end{eqnarray}

\new{Before stating our main result for this subsection, we note the following: for a fixed $a$, because the spectrum of $\Delta_{\Goa}$ (counting multiplicity) can be written as the union $\smash{ \sigma(\Delta_{\Goa})=\bigcup_{k\ge 1}\Lambda_{k,a}^\temp\cup\bigcup_{k\ge 1}\Lambda_{k,a}^\spat }$,} the indexing of the elements of $\sigma(\Delta_{\Goa})$ immediately yields $\Lambda_{k,a}^\temp\le \Lambda_{k,a}$ and $\Lambda_{k,a}^\spat\le \Lambda_{k,a}$ for $k\ge 1$.

\begin{theorem}
\label{thm:eigbounds}
\quad
\begin{enumerate}
\item For each $k\ge 1$ and $a > 0$ one has $\lambda_k^D \le \Lambda_{k,a}^{\mathrm{spat}}$.
\item For each $k\ge 1$, $\Lambda_{k,a}$, $\Lambda_{k,a}^{\mathrm{temp}}$, and $\Lambda_{k,a}^{\mathrm{spat}}$ are nonincreasing in $a\ge 0$,
\item For each $k\ge 1$, $\lim_{a\to\infty} \Lambda_{k,a}^\temp \to-\infty$.
\item For each $k\ge 1$, $\lim_{a\to\infty} \Lambda_{k,a}^{\mathrm{spat}} = \lim_{a\to\infty} \Lambda_{k,a} = \lambda_k^D$.
\end{enumerate}
\end{theorem}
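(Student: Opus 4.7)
My plan is to handle the four claims in order, with only (4) requiring significant work. For (1), I use the Courant--Fischer min-max characterisation applied to~\eqref{eq:LB-lamkspat}. Given $f \in L^2(M)$, the lift $F(t,x) := f(x)$ lies in $\mathbb{S}_0^\spat$ (its spatial mean $\int_M f\,d\ell$ is a constant function of $t$) and satisfies $\partial_t F = 0$. By \eqref{eq:Gdecomp}, and since $dV_\Goa = (1/a)\,dt\,d\ell$ causes the factor $1/a$ to cancel, the $\Delta_{\Goa}$-Rayleigh quotient of $F$ equals the dynamic Laplacian Rayleigh quotient of $f$ in \eqref{eq:dl-lamk}. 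Lifting the span of the first $k$ dynamic Laplacian eigenfunctions to a $k$-dimensional subspace of $\mathbb{S}_0^\spat$ and applying min-max yields $-\Lambda_{k,a}^\spat \le -\lambda_k^D$, i.e.\ $\lambda_k^D \le \Lambda_{k,a}^\spat$.

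For (2), the same cancellation gives the Rayleigh quotient
\[
R_a(F) = \frac{a^2\|\partial_t F\|_{L^2(\set{M}_0)}^2 + \int_0^\tau\!\!\int_M \|\nabla_{g_t}F\|_{g_t}^2\,d\ell\,dt}{\|F\|_{L^2(\set{M}_0)}^2},
\]
which is manifestly nondecreasing in $a$ for each fixed $F$. Since the subspaces $\mathbb{S}_0^\spat$ and $\mathbb{S}_0^\temp$ are $a$-independent, min-max over these (or over all of $L^2(\set{M}_0)$) shows that $-\Lambda_{k,a}$, $-\Lambda_{k,a}^\temp$, and $-\Lambda_{k,a}^\spat$ are nondecreasing in $a$. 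Part (3) is immediate from $\Lambda_{k,a}^\temp = -(a\pi k/\tau)^2$.

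For (4), parts (1) and (2) already show that $\Lambda_{k,a}^\spat$ decreases to a limit $L_k^* \ge \lambda_k^D$, and by (3), whenever $a\pi/\tau > \sqrt{-\lambda_k^D}$ all temporal modes fall below $\Lambda_{k,a}^\spat$, so $\Lambda_{k,a} = \Lambda_{k,a}^\spat$ and it suffices to prove $L_k^* \le \lambda_k^D$. I argue by induction on $k$; the base case $k=1$ is trivial since constants are spatial eigenfunctions with eigenvalue $0=\lambda_1^D$. For the inductive step, normalise the $k$-th spatial eigenfunction $F_{k,a}$ in $L^2(\set{M}_0)$. The eigenvalue equation combined with \eqref{eq:Gdecomp} gives
\[
a^2\|\partial_t F_{k,a}\|_{L^2}^2 + \|\nabla_{g_t}F_{k,a}\|_{L^2}^2 = -\Lambda_{k,a}^\spat \le -\lambda_k^D,
\]
so $\|\partial_t F_{k,a}\|_{L^2}\to 0$ and $\{F_{k,a}\}$ is bounded in $H^1(\set{M}_0)$ (using that $g_t$ is uniformly equivalent to the Euclidean metric on the compact interval $[0,\tau]$). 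By Rellich compactness a subsequence satisfies $F_{k,a}\to F^*$ strongly in $L^2$ and weakly in $H^1$; since $\partial_t F^* = 0$, $F^*(t,x) = f^*(x)$ with $\|f^*\|_{L^2(M)}^2 = 1/\tau$.

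The main obstacle is then the passage to the limit in the eigenvalue equation. Testing the weak form with $\phi(t,x) = \phi_0(x)$ annihilates the $a^2\partial_{tt}$ term, leaving
\[
\int_0^\tau\!\!\int_M g_t(\nabla F_{k,a},\nabla \phi_0)\,d\ell\,dt = -\Lambda_{k,a}^\spat\!\int_{\set{M}_0} F_{k,a}\phi_0\,dt\,d\ell.
\]
Passing to the limit using the strong $L^2$ and weak $H^1$ convergences, and recalling $\det J_t\equiv 1$ (so $\Delta_{g_t}$ is in divergence form with coefficient $g_t^{-1}$) together with \eqref{DL}, one obtains the weak form of $\Delta^D f^* = L_k^* f^*$. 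Hence $f^*$ is a dynamic Laplacian eigenfunction with eigenvalue $L_k^*$. By the inductive hypothesis applied along a further refined subsequence, the limits of $F_{i,a}^\spat$ for $i<k$ span the first $k-1$ dynamic Laplacian eigenspaces, and the $L^2$-orthogonalities $F_{k,a}\perp F_{i,a}^\spat$ pass to the limit to give $f^*\perp f_1,\ldots,f_{k-1}$ in $L^2(M)$. The variational characterisation of $\lambda_k^D$ then forces $-L_k^* \ge -\lambda_k^D$, i.e.\ $L_k^* \le \lambda_k^D$, completing the induction. The subtle point requiring care is that when $\lambda_k^D$ has multiplicity greater than one, one must verify that the inductive limits actually span the full lower eigenspaces, which a standard diagonal subsequence argument achieves.
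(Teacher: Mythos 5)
Parts 1--3 of your argument coincide with the paper's proof: the same lift of the dynamic-Laplacian eigenfunctions into the time-constant subspace of $\mathbb{S}_0^\spat$ combined with Courant--Fischer for part 1, monotonicity of the Rayleigh integrand in $a$ for part 2, and the explicit formula for part 3. For part 4 you take a genuinely different route. The paper extracts only a weak-$H^1$ accumulation point via Banach--Alaoglu, shows its time derivative vanishes, and then remains entirely inside the variational framework: a Cauchy--Schwarz chain yields $\Lambda_{k,*}\ge\lim_a\Lambda_{k,a}$, and a min--max comparison using the span of the weak limits $F_{1,*},\ldots,F_{k,*}$ inside the subspace of time-constant functions yields the reverse bound. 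You instead upgrade to strong $L^2$ convergence via Rellich (which gives $f^*\ne 0$ for free, a point the weak-limit route must handle separately) and pass to the limit in the weak eigenvalue equation tested against time-independent functions, thereby identifying $f^*$ as an actual eigenfunction of $\Delta^D$ with eigenvalue $L_k^*$. This is a clean and transparent alternative; what it buys is an exact identity $R(f^*)=-L_k^*$ rather than inequalities between Rayleigh quotients, at the price of the inductive bookkeeping at the end.

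That closing step is the one place where your write-up does not hold as stated. If $\lambda_{k-1}^D=\lambda_k^D$, then orthogonality of $f^*$ to $f_1^*,\dots,f_{k-1}^*$ combined with the min--max characterisation applied to $\mathrm{span}\{f_1^*,\dots,f_{k-1}^*,f^*\}$ does \emph{not} force $-L_k^*\ge-\lambda_k^D$: the maximum of the Rayleigh quotient over that span already exceeds $-\lambda_k^D$ because of the term $-\lambda_{k-1}^D$, so no constraint on $L_k^*$ results. Your proposed repair also cannot work literally: the $k-1$ limits $f_1^*,\dots,f_{k-1}^*$ cannot ``span the first $k-1$ eigenspaces'' when those eigenspaces have total dimension exceeding $k-1$, which is exactly the multiplicity case at issue, and no diagonal subsequence changes this. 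The correct way to close the induction is spectral: by the inductive hypothesis $V:=\mathrm{span}\{f_1^*,\dots,f_{k-1}^*\}$ is spanned by mutually orthogonal eigenfunctions of $\Delta^D$ with eigenvalues $\lambda_1^D,\dots,\lambda_{k-1}^D$, hence $V$ and $V^\perp$ are $\Delta^D$-invariant and the spectrum of $\Delta^D$ restricted to $V^\perp$ is the multiset $\{\lambda_k^D,\lambda_{k+1}^D,\dots\}$. Consequently every normalised $f\in V^\perp$ has Rayleigh quotient (as in \eqref{eq:dl-lamk}) at least $-\lambda_k^D$; applying this to $f^*\in V^\perp$ and using $R(f^*)=-L_k^*$ (or weak lower semicontinuity of the Dirichlet energy together with the strong $L^2$ convergence of the denominator) gives $L_k^*\le\lambda_k^D$. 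With that substitution your argument is complete and correct.
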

\begin{proof}
See Appendix \ref{app:spectrum}.
\end{proof}

As $a$ increases, part 2 of Theorem~\ref{thm:eigbounds} states that $\Lambda_{k,a}^{\mathrm{temp}}$ and $\Lambda_{k,a}^{\mathrm{spat}}$ monotonically decrease.
This is intuitive because a larger $a$ leads to larger a value of
$\|\nabla_{\Goa} F\|_{\Goa}^2$ in \eqref{eq:LB-lamk} as we increasingly penalise variation of $F$ in the temporal direction.
We note that as $a$ increases, the ordering of eigenvalues $\Lambda_{k,a}$ in the full spectrum will change, and therefore the index $k$ is implicitly a function of~$a$.
% as $a$ increases in the sense that the ordering of the eigenvalues of $\Delta_{\Goa}$ are continually changing with increasing $a$.
Temporal eigenvalues are demoted to lower positions in the full spectrum as $a$ increases, leaving only spatial eigenvalues in the leading part of the full spectrum for sufficiently large~$a$.
Parts 1, 2, and 4 of Theorem~\ref{thm:eigbounds} are illustrated numerically in Figure~\ref{fig:ChiSow_gy-mxmx_evals_vs_a} for the Childress--Soward system from section~\ref{ssec:ChiSow_gy-mxmx}.
\begin{figure}[htb]
\centering
\includegraphics[width=0.5\textwidth]{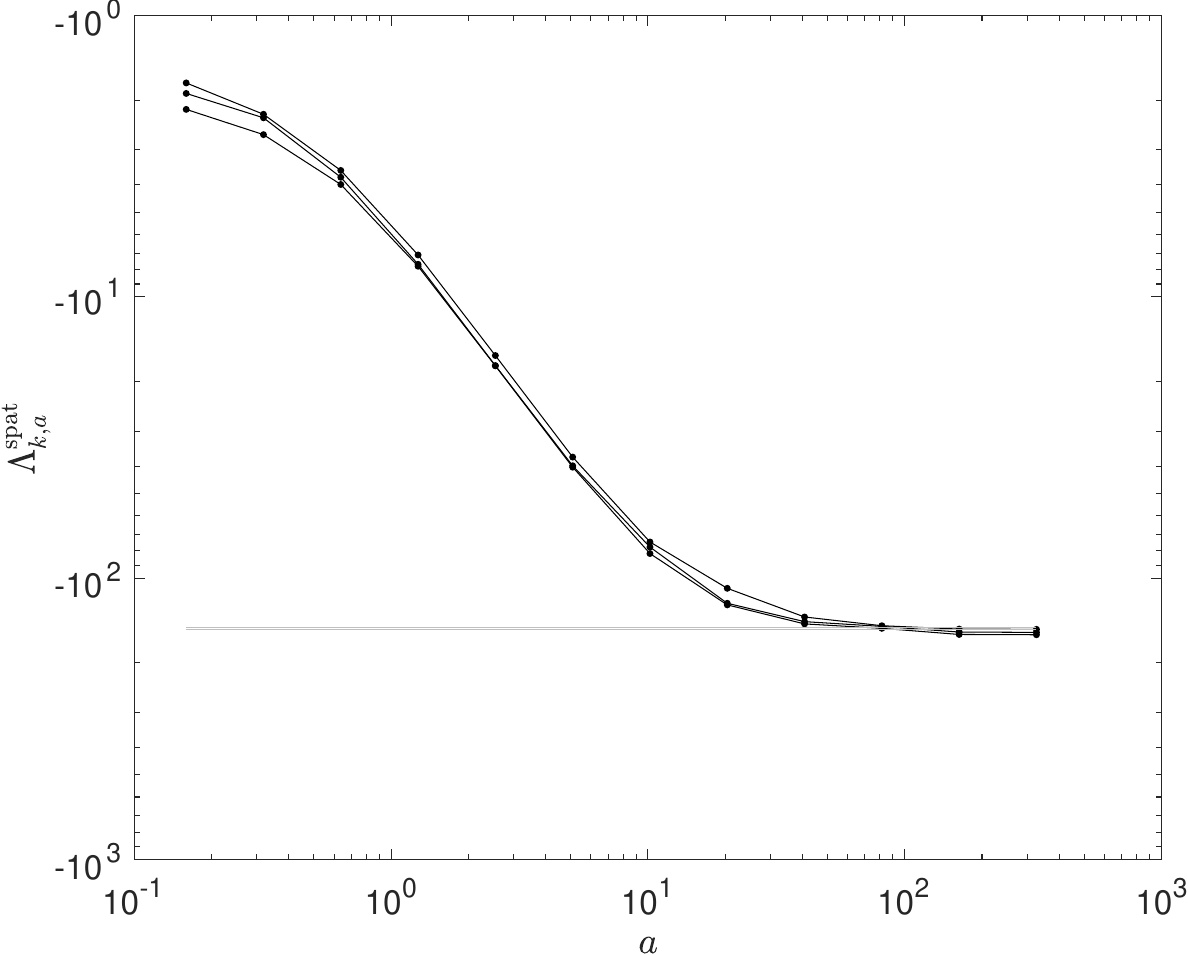}
\caption{The three subdominant spatial eigenvalues $\Lambda^\spat_2,\Lambda^\spat_3,\Lambda^\spat_4$ of $\Delta_{\Goa}$ versus~$a$. The associated system is discussed in section~\ref{ssec:ChiSow_gy-mxmx}. The grey horizontal lines (indistinguishable in this figure) indicate the values of the three subdominant eigenvalues of the dynamic Laplace operator~$\Delta^D$ for this system.}
\label{fig:ChiSow_gy-mxmx_evals_vs_a}
\end{figure}

For fixed $k$ and increasing $a$, we expect the eigenfunction $F_{k,a}^{\spat}$ to become more regular in the temporal direction as the infimum in (\ref{eq:LB-lamkspat}) seeks to reduce the combination of gradients in the temporal and spatial directions.
In the limit as $a\to\infty$, there will be vanishing variation in the temporal direction and we will recover the $k^{\mathrm{th}}$ eigenfunction of the dynamic Laplacian $\Delta^D$, copied across time.
In the other direction, as $a\to 0$, and the penalisation of the temporal variation diminishes, we expect $\smash{ F_{k,a}^{\spat}(t,\cdot) }$ to simply encode the spatial structure of $(M,g_t)$;  that is, $\smash{ F_{k,a}^{\spat}(t,\cdot)\approx f_{k,t} }$, where $f_{k,t}$ is the $k^{\mathrm{th}}$ eigenfunction of~$\Delta_{g_t}$.

From the above discussion we see that level sets of the eigenfunctions $F_{k,a}^{\spat}$, from which we will create our semi-material finite-time coherent sets, will interpolate from being strictly material (in the $a\to\infty$ limit) to rather non-material (for small~$a$).
This is consistent with the discussion of the behaviour of stochastic trajectories at the conclusion of section \ref{sect:diff-time}, and the behaviour of the minimising disconnectors $\bbGamma$ (which form boundaries of FTCS or part thereof) in Remark \ref{cheegerremark}.

%%%
\section{Semi-material coherent sets from the inflated dynamic Laplacian}
\label{ssec:practical}

We shall now discuss how we can utilize the previous theoretical considerations to identify coherent sets and their lifetimes.
In particular, we explain how we can identify different dynamical regimes---existence of coherent sets or global mixing---from the eigenmodes of the inflated dynamic Laplace operator~$\Delta_{\Goa}$.
We illustrate these ideas using the partially coherent Childress--Soward system described in full detail in section~\ref{ssec:ChiSow_gy-mxmx}.

\subsection{Choosing the temporal diffusion parameter $a$}
\label{sssec:choosing_a}

For large $a$, temporal diffusion in $\Delta_{\Goa}$ will dominate and because of the variational (minimisation) characterisation of the eigenvalues, any temporal variation in the eigenfunctions will be heavily penalised.
Therefore we expect eigenfunctions corresponding to eigenvalues early in the spectrum to be purely spatial.
More precisely, from section~\ref{ssec:avging} and Theorem~\ref{thm:eigbounds}, for large $a$ we expect spatial eigenfunctions of $\Delta_{\Goa}$ to be approximately ``time-copied'' versions of the eigenfunctions of the dynamic Laplace operator.
In the other direction, for small $a$ there is very low temporal diffusion and different time fibres of eigenfunctions $F(t,\cdot)$ will approximately decouple and depend almost entirely on the spatial metric $g_t$ on the $t^{\mathrm{th}}$ time fibre.
If one were to attempt to extract coherent sets through level sets of $F$ in this small $a$ regime, the coherent sets could be highly non-material.
%Another intuition for this behaviour is that on a sufficiently short timescale (governed by the temporal ``reach'' of the small temporal diffusion) everything is coherent.

We aim for a sweet spot for $a$ somewhere in between these extremes.
We would like to have the dominant eigenfunctions of $\Delta_{\Goa}$ consisting mostly of spatial eigenfunctions, because it is these we are primarily interested in, but also including a small number of temporal eigenfunctions, so that such an $a$ allows some temporal variation in the spatial eigenfunctions.
The latter point is crucial for being able to discriminate between coherent and mixing regimes over our full time domain.

We now discuss a heuristic to select a lower bound for $a$.
The largest nonzero eigenvalue from the purely temporal component of $\Delta_{\Goa}$ is $\Lambda_1^\temp=-a^2\pi^2/\tau^2$, where $\tau$ is the flow duration.
Assuming a rectangular domain $M$ with (maximal) side length $l$, the largest nontrivial eigenvalue of the Laplace--Beltrami operator on $(M,e)$ is $-4\pi^2/l^2$ for periodic boundary conditions, and $-\pi^2/l^2$ for homogeneous Neumann boundary conditions, respectively.
The spatial eigenvalues of $\Delta_{\Goa}$ will in general be larger in magnitude (more negative) than these values because of the presence of dynamics.
Thus, if we desire the contribution from the temporal component to be about the same as the spatial component (with no dynamics), in the periodic case we want $a^2\pi^2/\tau^2\approx 4\pi^2/l^2$, so we set $a_{\min}=2\tau/l$ as the lower bound for $a$.
Similarly, for homogeneous Neumann boundary conditions, we set $a_{\min} = \tau/l$.
In section \ref{ssec:ChiSow_gy-mxmx}, this leads to $a_{\min}=2/\pi$ for our numerical example.
We recommend beginning with $a_{\min}$ computed in this way and then increasing~$a_{\min}$.
Using this heuristic for the partially coherent Childress--Soward system in Section \ref{sec:examples}, one obtains a spectrum as shown in Figure~\ref{fig:ChiSow_gy-mxmx_evals}.
As predicted, we see that $\Lambda_2^\spat<\Lambda_1^\temp$
\begin{figure}[htb]
\centering
\includegraphics[width = 0.5\textwidth]{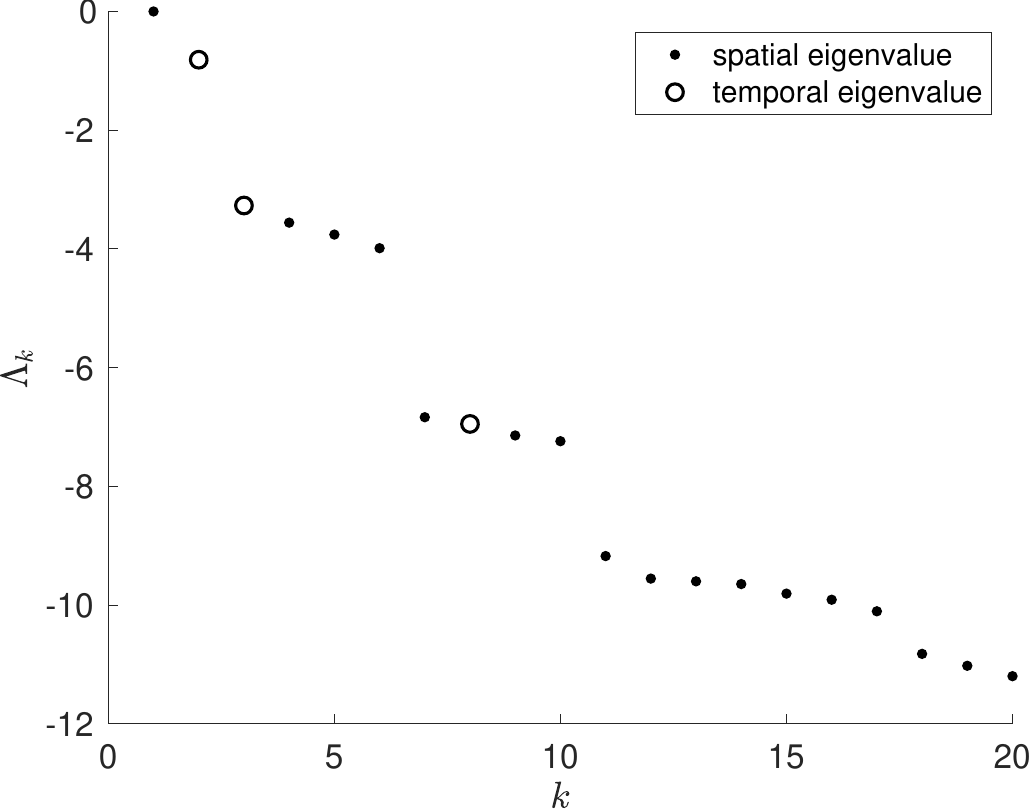}
\caption{Eigenvalues of $\Delta_{\Goa}$ for the partially coherent Childress--Soward system for ${ a = 2 / \pi }$, estimated by the FEM-based discretisation dscribed in section~\ref{sec:FEMDL}. Spatial modes are indicated by dots, temporal ones by circles.}
\label{fig:ChiSow_gy-mxmx_evals}
\end{figure}

The above discussion suggests that in theory one may make $\tau$ as large as one likes and the temporal parameter $a_{\min}$ can be scaled linearly with $\tau$ to compensate.
For numerical reasons it is better to choose $\tau$ larger than, but not much larger than, the expected maximal lifetime of the finite-time coherent sets of interest.
We pick up this point again in Section~\ref{ssec:a_timescale}.
%add here something about if tau increased this can compensated in theory by increasing a_min commensurately but for numerical reasons discussed in section.... it is better to choose a tau not much larger than ....

\subsection{Distinguishing spatial and temporal eigenfunctions}
\label{sssec:distinguishing}

From section~\ref{sec:specaug} we know that spatial eigenfunctions $F$ of $\Delta_{\Goa}$ have time fibres $F(t,\cdot)$ with constant mean, cf.~\eqref{eq:spatial_meanzero}.
The temporal eigenfunctions have the form $F_k(t,x)=C\cos(k\pi t/\tau)$, $k\ge 1$.
We will numerically compute the variance of the means of the time fibres $F(t,\cdot)$;  if this variance is close to zero, the eigenfunction is spatial.
We now analytically determine the variance of the temporal eigenfunctions $F_k$, $k\ge 1$.
Let us normalise so that $\|F_k\|^2_{L^2(\mathbb{M}_0)}=\tau\ell(M)=\|\mathbf{1}\|^2_{L^2(\mathbb{M}_0)}$;
% to $L^2$ norm 1 on our domain $[0,\tau]\times M$
this implies $C = \sqrt{2}$.
The mean $s(t)$ of $F(t,\cdot)$ on the $t^{\mathrm{th}}$ time fibre is
\[
s(t):=\frac{1}{\ell(M)}\int_M C\cos(k\pi t/\tau)\ d\ell=\sqrt{2}\cos(k\pi t/\tau).
\]
Toward computing the temporal variance of the spatial means $s(t)$,  we note the mean of $s(t)$ is $\frac{1}{\tau}\int_0^\tau s(t)\ dt=0$.
Thus, the variance of $s(t)$ is
\[
\frac{1}{\tau} \int_0^\tau (\sqrt{2}\cos(k\pi t/\tau))^2\ dt = \frac{1}{\tau}\cdot 2\frac{\tau}{2} = 1.
\]
Therefore, with the above normalisation we have a simple numerical procedure for distinguishing spatial from temporal eigenfunctions by computing the variance of $s(t)$.
If the variance of $s(t)$ is zero (or near zero), the eigenfunction is spatial and if the variance of $s(t)$ is 1 (or near 1), the eigenfunction is temporal.
This scheme was used to categorise the spectrum shown in Figure~\ref{fig:ChiSow_gy-mxmx_evals}.

\subsection{Distinguishing coherent flow regimes from mixing regimes}
\label{sssec:fibrenorms}

%Suppose we normalise a spatial eigenfunction $F$ so that $\|F\|_{L^2(\mathbb{M}_0)}=1$.
The (signed) mass of a spatial mode $F$ has to distribute itself over its time fibres $F(t,\cdot)$ because
\[
\|F\|_{L^2(\mathbb{M}_0)}^2=\int_0^\tau \|F(t,\cdot)\|_{L^2(M)}^2\ dt=1.
\]
If the temporal diffusion coefficient $a$ is suitably chosen, we will be able to distinguish temporal regions where coherent dynamics is present or absent using the $L^2$ norms of time fibres of \emph{subdominant} eigenfunctions $F(t,\cdot)$, $t\in[0,\tau]$.
It is important to recall that $\int_M F(t,\cdot)\ d\ell=0$ for a.e.~$t\in[0,\tau]$, which we have by~\eqref{eq:spatial_meanzero} and the fact that subdominant spatial modes are also orthogonal to~$\mathbf{1}_{\set{M}_0}$. This implies that the only way for $F(t,\cdot)$ to be constant on the $t^{\mathrm{th}}$ time fibre is~$F(t,\cdot)\equiv 0_{\{t\}\times M}$.

For $t$ in intervals where coherent dynamics is present, the norm of $F(t,\cdot)$ may be relatively large, with $F(t,\cdot)$ taking large positive (say) values in the coherent region in space and negative values in the complement of the coherent region.
Within each coherent region, $F(t,\cdot)$ should be approximately constant to achieve small values of $\|\|\nabla_{g_t}F(t,\cdot)\|_{g_t}\|_{L^2(M)}$.
On the other hand, during periods of intense global mixing in space, for sufficiently large $a$ it is likely that $\|F(t,\cdot)\|_{L^2(M)}^2$ will be small.
This is because the metric $g_t$ is rapidly varying in time and in order to achieve a minimal eigenvalue in the variational characterisation of eigenvalues (i.e.\ low values of $\|\|\nabla_{g_t}F(t,\cdot)\|_{g_t}\|_{L^2(M)}$) the eigenfunction $F$ should also be rapidly varying in time to adapt to~$g_t$.
In opposition to this effect, if $a$ is large enough, rapid variation of $F$ in time will be costly in the temporal direction (i.e.\ large values of $|\partial_t F|$).
The way out is for $F(t,\cdot)$ to be constant (i.e.\ zero) when strong globally mixing is present.
This pushes
%The only remaining option to achieve small eigenvalues is to push
the (signed) mass of $F$ onto the most coherent time fibres and minimises the $L^2$ norm on strongly mixing time fibres; see Figure~\ref{fig:ChiSow_gy-mxmx_slicenorms_vs_a}.
%This would however lead to large values of $\|\nabla_{g_t}F(t,\cdot)\|_{L^2(M)}$;  the way out is for $F(t,\cdot)$ to be zero when strong globally mixing is present.
Of course, the above analysis is strictly for spatial eigenfunctions $F$ because the $L^2$ norms of time-fibres of temporal eigenfunctions vary dramatically in time.
\begin{center}
\begin{minipage}{0.8\textwidth}
\emph{In summary, as a basic indicator to discriminate between coherent vs mixing regimes we use the relative values of the $L^2$ norms of the time fibres of dominant spatial eigenfunctions.}	
\end{minipage}
\end{center}
This intuition is further formalized section~\ref{sec:characterisation}.

\begin{figure}[htb]
\centering
\begin{subfigure}{0.45\textwidth}
\includegraphics[width = \textwidth]{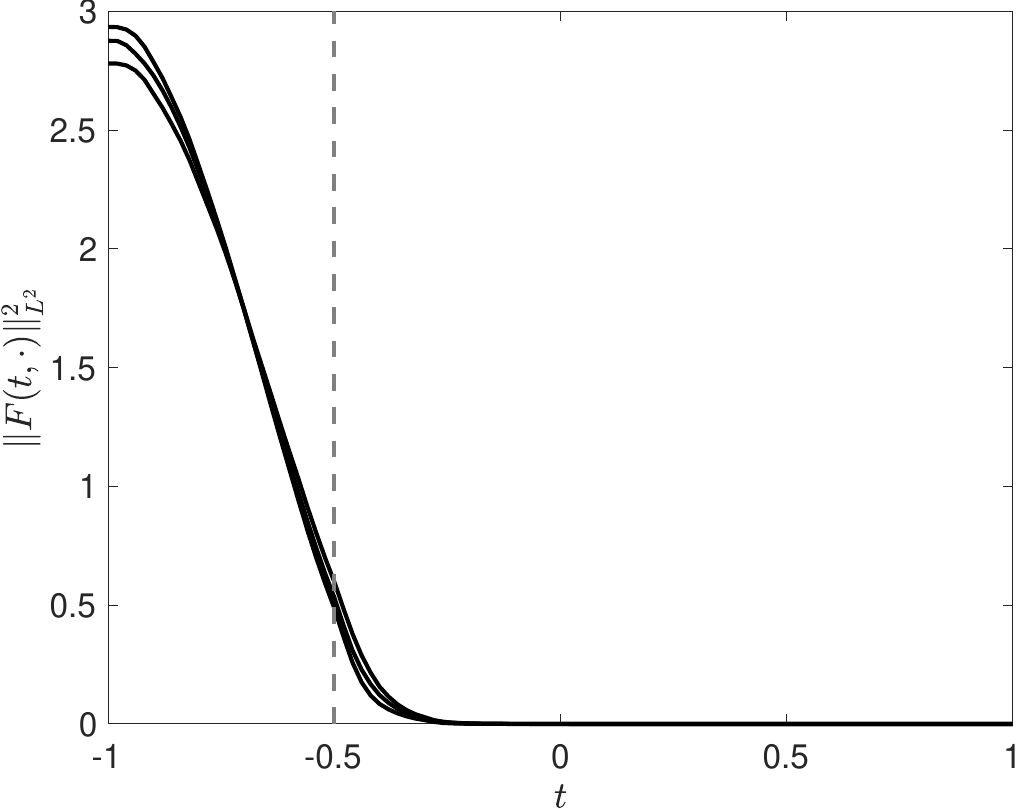}
\subcaption{}
\end{subfigure}
\begin{subfigure}{0.45\textwidth}
\includegraphics[width=\textwidth]{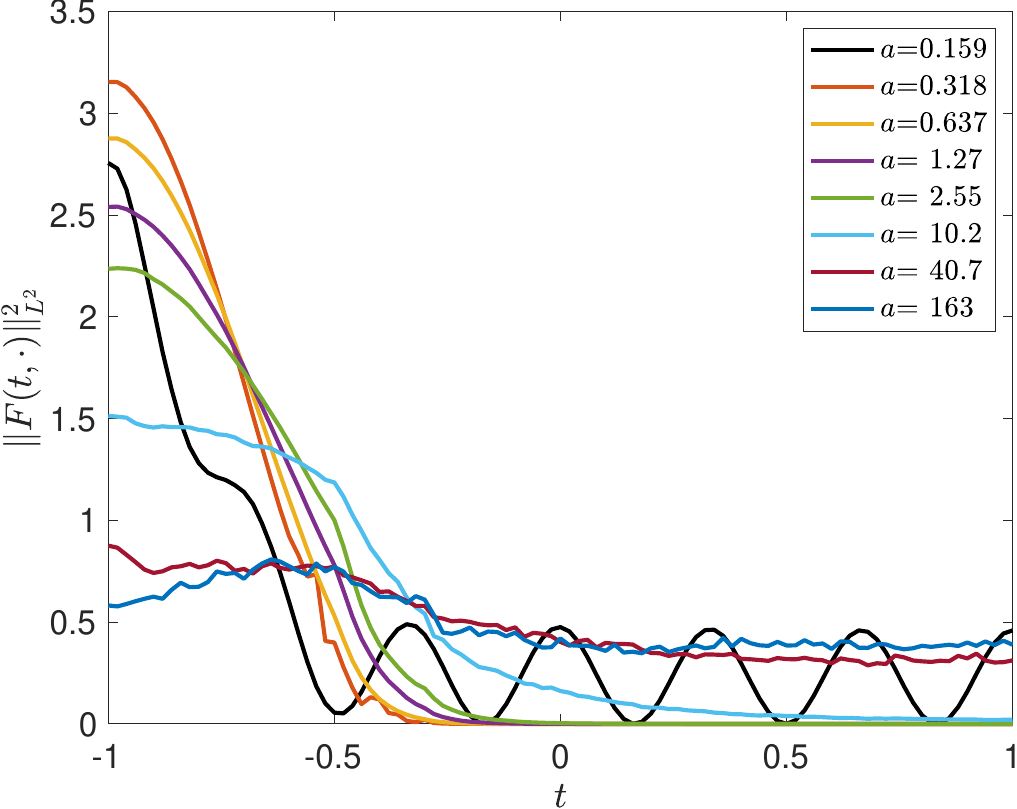}
\subcaption{}
\end{subfigure}
\caption{Slicewise squared $L^2$ norms of subdominant spatial eigenfunctions for the partially coherent Childress--Soward system, which has a coherent regime for~$t\in [-1,-0.5]$ and is mixing for $t\in [-0.5,1]$. (a) The function $t \mapsto \|F(t,\cdot)\|_{L^2}^2$ for the first three subdominant spatial modes~$F$ for $a= \frac{2}{\pi}$, having the eigenvalues~$\Lambda_k^{\mathrm{spat}} = -3.5517, -3.7559, -3.9847$, $k=2,3,4$. (b) Slicewise squared $L^2$ norms of the $4^{\mathrm{th}}$ spatial eigenmode $\Lambda_4^{\mathrm{spat}}$ of $\Delta_{\Goa}$ for several different choices of the temporal diffusion strength~$a$. We observe that the best distinction between the coherent and mixing regimes is obtained between $a = \frac{1}{\pi} \approx 0.32$ and~$a = \frac{4}{\pi} \approx 1.27$ (recall our heuristic from subsection \ref{sssec:choosing_a} suggested increasing $a$ from the value $a_{\min}=\frac{2}{\pi}$). For $a \gtrsim \frac{8}{\pi}$, the temporal variation of $F$ is too small and for $a \lesssim \frac{1}{2\pi}$ it is too large.}
\label{fig:ChiSow_gy-mxmx_slicenorms_vs_a}
\end{figure}

\subsection{A posteriori considerations regarding $a$}
\label{ssec:a_timescale}

The previous three subsections are sufficient to select a reasonable value for the parameter $a$, to separate temporal and spatial eigenfunctions, and to use the spatial eigenfunctions to find regimes of coherent behaviour.
In this final subsection we take a closer look at the relationship between $a$ and where in the spectrum a coherent set of a certain spatial regularity and temporal duration might be expected to appear.
We will do this by comparing the dynamic Cheeger constant of a specified finite-time coherent set with the Cheeger constants of sets extracted from level sets of temporal eigenfunctions.

Consider a set $A\subset M$ that remains coherent in the time interval $0<\tau_1<\tau_2<\tau$.
By volume-preservation of the dynamics we write $\ell$ for the volume on $M$ and later also for the volume on each time slice $\{t\}\times M$.
Assuming that $\ell(A)\le \ell(M)/2$ and following (\ref{dynche}), but without taking the infimum, the dynamic Cheeger constant of the disconnector $\Gamma=\partial A$ on the interval $[\tau_1,\tau_2]$ is:
\[
h^D(\partial A) = \frac{\frac{1}{\tau_2-\tau_1}\int_{\tau_1}^{\tau_2} V_{i^*g_t}(\partial A)\,dt}{\ell(A)},
\]
where $i:A\hookrightarrow M$ is the inclusion map.
% and by volume-preservation of the dynamics we write $\ell$ for the volume on each time slice.
The set $A$ naturally defines a space-time set of the form~$\set{A} = [\tau_1,\tau_2]\times A \subset \set{M}_0$.
Because $\ell(A)\le \ell(M)/2$, we have $V_{\Goa}(\set{A})\le V_{\Goa}(\set{M}_0)/2$, and thus the Cheeger constant of $\bbGamma = \partial\set{A}$ is:
\begin{equation}
	\label{eq:Cheeger_bbA}
	H_a(\partial\set{A}) = \frac{2\ell(A) + \frac{1}{a}\int_{\tau_1}^{\tau_2}V_{i^*g_t}(\partial A) \,dt}{(\tau_2-\tau_1)\ell(A)/a} = \frac{2a}{\tau_2-\tau_1} + h^D(\partial{A}).
\end{equation}

We now turn to the temporal eigenfunctions.
We wish to construct a superlevel set from the $k^{\mathrm{th}}$ temporal eigenfunction $F^\temp_k(t,x)=\cos(k\pi t/\tau)$ that has least Cheeger constant;  this will occur for the level set at 0.
We therefore define the superlevel set $\set{A}_k^\temp:=\{(t,x)\in\set{M}_0:F^\temp_k(t,x)>0\}$.
The boundary $\partial\set{A}_k^\temp = (F^\temp_k)^{-1}(0)$, consists of $k$ copies of~$M$. The associated spatiotemporal Cheeger constant is
\[
H_a(\partial\set{A}_k^\temp) = \frac{k\ell(M)}{(\tau/2)\ell(M) / a)} = \frac{2ka}{\tau}.
\]
%Sets extracted from spatial modes will have a spatial boundary, and this will be reflected in their Cheeger constants.
We now wish to compare $H_a(\partial\set{A})$ with $H_a(\partial\set{A}_k^\temp)$ and so we equate these two values.
Solving the resulting equality for $k$ yields
%How should we choose $a$ such that we observe $\set{A}$ as more coherent than the sets enclosed by~$\bbGamma^\temp_k$? We can answer this by resolving $H_a(\bbGamma) < H_a( \bbGamma^\temp_k )$, yielding
\begin{equation}
	\label{eq:a_bound}
k=\tau\left(\frac{1}{\tau_2-\tau_1}+\frac{h^D(\partial A)}{2a}\right).
%	h^D(\partial A) < 2a \left(\frac{k}{\tau} - \frac{1}{\tau_2-\tau_1} \right).
\end{equation}
We note a few points for fixed $a$.
\begin{itemize}
\item Coherent sets with shorter duration will tend to appear further down the spectrum because the term $\frac{\tau}{\tau_2-\tau_1}$ increases with shrinking duration~$\tau_2-\tau_1$.
\item A coherent set with a larger dynamic Cheeger constant on $[\tau_1,\tau_2]$ will appear further down the spectrum.
     %than needed....as remarked at the end of section ....
\item Because eigenfunctions with larger index $k$ are more difficult to accurately estimate numerically, we see from \eqref{eq:a_bound} that it is better to choose $\tau$ not too much larger than~$\tau_2-\tau_1$.
\end{itemize}
If we consider varying $a$:
\begin{itemize}
\item When $a$ is small it is predominantly the dynamic Cheeger constant that governs where the set appears in the spectrum, and when $a$ is larger, the temporal duration of the coherence is the important factor.
\item Rearranging (\ref{eq:a_bound}) to solve for $a$, we obtain
$$a=\frac{h^D(\partial A)}{2}\left(\frac{k}{\tau}-\frac{1}{\tau_2-\tau_1}\right)^{-1}.$$
This provides a rough indication of a choice of $a$ to pick up the coherent set $A$ in a spatial eigenfunction appearing approximately nearby the $k^{\mathrm{th}}$ temporal eigenfunction in the eigenvalue ordering;  note that smaller duration $\tau_2-\tau_1$ will force larger $k$ to maintain positivity of the second term above.
\end{itemize}

For the Childress--Soward flow from section~\ref{ssec:ChiSow_gy-mxmx} let us consider the second spatial mode, which identifies two vortices next to each other as a coherent set $A$;  see Figure \ref{fig:ChiSow_gy-mxmx_EVs}(a).
This set has perimeter $4\pi$ because the domain is periodic, and area $2\pi^2$; therefore $h^D(\partial A) = 2/\pi$.
Further, $\tau=2$ and $\tau_2-\tau_1=1/2$, and we note that the boundary of $\partial\set{A}$ at $t=-1$ does not enter the Cheeger constant calculations because of the Neumann boundary conditions on the temporal faces of $\set{M}_0$.
Thus, $H_a(\bbGamma) = \frac{a}{\tau_2 - \tau_1} + h^D(\partial A)$ in \eqref{eq:Cheeger_bbA} and in~\eqref{eq:a_bound} we replace $\tau_2-\tau_1$ by~$2(\tau_2-\tau_1)$.
The equality \eqref{eq:a_bound} becomes $k = 2+\frac{2}{a\pi}$. This can be satisfied for $k\ge 3$, and with $k=3$ it yields~$a = 2/\pi$. This is the same value the heuristic from section \ref{sssec:choosing_a} suggested.

%%%
\section{A one-dimensional surrogate model}
\label{sec:characterisation}

In this section we construct a reduced one-dimensional eigenproblem from the inflated dynamic Laplace eigenproblem by integrating out the spatial dynamics.
% onto a single scalar function governing the strength of ``time-local'' mixing.
The analysis of this reduced problem further formalizes our intuition from the previous section on how to deduce regimes of coherence and mixing.

%%%
\subsection{Derivation of a surrogate 1D model}
\label{ssec:surr_deriv}

Let $\Delta_{\Goa}F=\Lambda F$ with $\|F\|_{L^2(\set{M}_0)}=1$. Using (\ref{eq:LBab}), we multiply both sides by $F$ and integrate over the $t^{\mathrm{th}}$ fibre $\{t\}\times M$:
\begin{align*}
\Lambda\,\|F(t,\cdot)\|_{L^2(M)}^2 = \Lambda\int_{M}F^2\ d\ell &= a^2\int_M(\partial_{tt}F)\cdot F\ d\ell+\int_M\Delta_{g_t}F\cdot F\ d\ell\\
&= a^2\int_M \frac12 \partial_{tt}(F^2)-(\partial_t F)^2\ d\ell-\int_M \|\nabla_{g_t}F\|^2_{g_t}\ d\ell \\
&= \frac{a^2}{2}\partial_{tt}\int_M F^2\ d\ell-a^2\int_M(\partial_t F)^2\ d\ell - \int_M \|\nabla_{g_t}F\|^2_{g_t}\ d\ell
\end{align*}
which we write, using $u(t)=\|F(t,\cdot)\|_{L^2(M)}^2$, as
\begin{equation*}
	\frac{a^2}{2}u''-\left(a^2\frac{\int_M(\partial_t F)^2\ d\ell}{\|F(t,\cdot)\|_{L^2(M)}^2} + \frac{\int_M \|\nabla_{g_t}F\|^2_{g_t}}{\|F(t,\cdot)\|_{L^2(M)}^2}\right) u = \Lambda u.
\end{equation*}

Making the obvious substitutions for the (time-dependent) temporal and spatial Rayleigh-type coefficients, we write this as
\begin{equation}
\label{eq:surrogate}
\frac{a^2}{2}u''(t)=\left(\Lambda+\left[a^2\rho^\temp(t)+\rho^\spat(t)\right]\right)u(t),\quad\mbox{for $t\in (0,\tau)$}, \quad u'(0)=u'(\tau)=0.
\end{equation}
Our reduced equation (\ref{eq:surrogate}) describes the expected behaviour of $u(t)=\int_M F(t,\cdot)^2\ d\ell$, the square of the spatial norm of the eigenfunction $F$ on the $t^{\mathrm{th}}$ time slice.
On the $t^{\mathrm{th}}$ time fibre, the decay experienced due to the irregularity of $F$ is $a^2\rho^\temp(t)+\rho^\spat(t)$.
We interpret $\Lambda$ as the average space-time decay that the eigenfunction $F$ experiences on all of $\set{M}_0$.
Indeed, by the variational form~\eqref{eq:LB-lamk} we have that
\[
-\Lambda = \int_0^{\tau} \left(  a^2\rho^\temp(t)+\rho^\spat(t)\right) u(t)\, dt,
\]
and $\int_0^{\tau} u(t)\,dt=1$ by our choice of normalization of~$F$.

Recall that in section \ref{sssec:fibrenorms} we used the relative size of the fibre norms $u(t)$ to distinguish coherent flow regimes from incoherent ones.
There are two fundamental regimes:
\begin{enumerate}
\item $t\in[0,\tau]$ for which $\Lambda+(a^2\rho^\temp(t)+\rho^\spat(t))<0$.
    For such $t$ the \emph{local decay is less than the average decay}, indicative of $F$ encoding relatively \emph{coherent dynamics}.
    Because $u''<0$ and $u>0$, $u$ has a local maximum.  In other words there is a local peak in the norm of $\|F(t,\cdot)\|_{L^2(M)}$, consistent with the discussion in section \ref{sssec:fibrenorms}.
\item $t\in[0,\tau]$ for which $\Lambda+(a^2\rho^\temp(t)+\rho^\spat(t))>0$.
    For such $t$ the \emph{local decay is greater than the average decay}, indicative of $F$ encoding \emph{relatively mixing dynamics}.
      Because $u''>0$ and $u>0$, $u$ (and therefore $\|F(t,\cdot)\|_{L^2(M)}$) has a local minimum, consistent with the discussion in section \ref{sssec:fibrenorms}.
\end{enumerate}

The above two regimes partition $[0,\tau]$ into time intervals where the eigenfunction $F$ encodes dynamics that is more coherent or less coherent, respectively, than the average coherence over all of $[0,\tau]$.
One could also define subintervals of $\tau$ with more extreme coherence relative to $F$ by introducing a threshold $c>0$.
For example, the sets $\{t\in[0,\tau]: \Lambda+(a^2\rho^\temp(t)+\rho^\spat(t))<-c\}$ and $\{t\in[0,\tau]: \Lambda+(a^2\rho^\temp(t)+\rho^\spat(t))>c\}$ indicate stronger coherence and stronger mixing, respectively, with increasing $c$.
On the former interval, $\|F(t,\cdot)\|_{L^2(M)}$ has a local maximum and on the latter, $\|F(t,\cdot)\|_{L^2(M)}$ has a local minimum.
In the next subsection we analyse the shape of, and transitions between, these maxima and minima.

\subsection{Analysis of a surrogate 1D model}
\label{ssec:surrogate}

In the previous subsection, the coefficient function $a^2\rho^\temp(t)+\rho^\spat(t)$ arose directly from the eigenfunction~$F$.
We now heuristically investigate replacing this exact coefficient function with a function denoted simply $\rho(t)$, whose form is suggested by properties of the flow, in an effort to infer something about $F$.
%begin to interpret \eqref{eq:surrogate} in a way that assumes more about the \emph{flow} than directly about an eigenfunction of~$\Delta_{\Goa}$; and hence \emph{infer} something about $F$ itself.
To this end, we write a schematic version of (\ref{eq:surrogate}),
\begin{equation}
\label{eq:surrogate2}
\frac{a^2}{2}u''(t)-\rho(t)u(t) = \nu\, u(t),\quad\mbox{for $t\in (0,\tau)$}, \quad u'(0)=u'(\tau)=0,
\end{equation}
where $\rho(t)\ge 0$  is meant to describe the ``relative mixing strength'' (larger $\rho$, greater mixing) that the flow inflicts on the supposed unknown function~$F$ on the $t^{\mathrm{th}}$ time fibre.

We assume that all we know in \eqref{eq:surrogate2} is $a$ and~$\rho$, and so this equation amounts to a \emph{Sturm--Liouville eigenproblem}.
By the theory of Sturm--Liouville eigenproblems~\cite[\S5.3--\S5.4, pp. 153 and 164, and Thm.~5.17]{Teschl2012}, if $\rho$ is integrable, \eqref{eq:surrogate2} has a countable spectrum of distinct eigenvalues $0 \ge \nu_0 > \nu_1 > \cdots$ all having multiplicity one, and the associated (up to constant scaling unique) eigenfunctions~$u_i$ are mutually orthogonal in $L^2([0,\tau])$ and have exactly $i$ zeros,~$i\ge 0$.
Since $u$ models the squared norm of time slices of $F$, only solutions~$u_i\ge 0$ are of interest, which leaves $u_0$ as the unique meaningful solution.
We note that if we have an eigenfunction $F$, applying the above remarks to (\ref{eq:surrogate}), which we obtain by substituting $\rho =  a^2\rho^\temp+\rho^\spat$ and $\nu=\Lambda$ into~\eqref{eq:surrogate2}, shows that the solution $u(t)=\|F(t,\cdot)\|_{L^2(M)}^2$ is the unique solution.

Returning to our heuristic discussion, for the Childress--Soward flow, from the discussion in section \ref{sssec:fibrenorms} and Figure \ref{fig:ChiSow_gy-mxmx_slicenorms_vs_a}, we expect $\rho(t)$ arising from eigenfunctions $F$ that highlight the coherent sets from time $-1$ to $-0.5$ to be small until the mixing regime begins at $t=-0.5$, after which $\rho(t)$ should rise to a much larger value.
%This expectation stems from the considerations in section~\ref{sssec:fibrenorms}, where we argue that for moderate values of $a$ the temporal variation of $F$ in a mixing regime cannot keep up with the variation of~$g_t$, resulting in large $\rho^\spat$.
A simple approximation of such a $\rho$ is a step function with two values $Z\gg z>0$ in the coherent and mixing regimes, respectively.
This step function form of $\rho(t)$ permits finer analysis of the surrogate model~\eqref{eq:surrogate2}.

For simplicity, in the following we set~$\tau=1$.
It is straightforward to compute a one-to-one correspondence between the following two homogeneous Neumann boundary value problems, one on $[0,\tau]$, and one on~$[0,1]$:
\begin{equation}
\label{eq:surrogate_scaling}
\frac{\tilde{a}^2}{2} \tilde{u}'' - \tilde{\rho} \tilde{u} = \nu\, \tilde{u} \text{ on } (0,\tau) \quad \raisebox{-15pt}{\tikz \draw[<->, thick] (-2,0) -- (2,0) node[below, pos=0.5] {\scriptsize  $\tilde{a} = a\tau, \, \tilde{\rho}(t) := \rho(t/\tau)$} node[above, pos=0.5] {\scriptsize $\tilde{u}(t):= u(t/\tau)$};} \quad \frac{a^2}{2} u'' - \rho u = \nu\, u \text{ on } (0,1)	
\end{equation}
The solution of the problem on $[0,\tau]$ is obtained from the solution on $[0,1]$ with scaled variables, if the the temporal diffusion strength is also scaled by~$\tau$.

As in the Childress--Soward flow in Section \ref{sec:examples} we assume that the velocity field of the system is such that there is coherence in the first $0<p<1$ fraction of the time interval, and then there is strong mixing.
Of course, the coherent regime could be located wherever in the time interval, our choice is merely for simplicity.
We set
\begin{equation}
	\label{eq:rho}
	\rho(t) = \left\{
\begin{array}{ll}
z, & t\in [0,p],\\
Z, & t\in (p,1],
\end{array}
\right.
\end{equation}
with $Z \gg z > 0$.
We recognise that replacing the a priori unknown coefficient function $\rho$ in~\eqref{eq:surrogate} by a $\rho$ taking only two values is a strong simplification.
The solutions of~\eqref{eq:surrogate2} can now be determined analytically, and the numerical results in Figure~\ref{fig:surrogate_example}(a) and Figure~\ref{fig:ChiSow_gy-mxmx_EV4} show that the profile of $u$ predicted by the surrogate model with this idealised $\rho$ is surprisingly accurate.
\begin{proposition} \label{prop:int_cohmix}
The solutions to \eqref{eq:surrogate2} with mixing rate function $\rho$ as in~\eqref{eq:rho} are
\begin{equation*}
u(t) = \left\{
\begin{array}{ll}
\alpha \cosh ( \omega_z t), & t\in [0,p],\\
\alpha \frac{\cosh (\omega_z p)}{\cosh(\omega_Z(1-p))}\cosh (\omega_Z (1-t)), & t\in [p,1],
\end{array}
\right.
\end{equation*}
where $\alpha$ is an arbitrary scaling constant, $\omega_z = \omega_z(\nu) = \frac{\sqrt{2(\nu+z)}}{a}$, $\omega_Z = \omega_Z(\nu) = \frac{\sqrt{2(\nu+Z)}}{a}$, and $\nu$ is the eigenvalue satisfying
\begin{equation} \label{eq:int_cohmix_eval}
f(\nu): = \frac{\omega_z(\nu)}{\omega_Z(\nu)} \tanh (\omega_z(\nu)\, p) + \tanh (\omega_Z(\nu)\, (1-p)) = 0.
\end{equation}
\end{proposition}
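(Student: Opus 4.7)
The plan is to treat \eqref{eq:surrogate2} as a piecewise constant-coefficient linear ODE on the two subintervals $[0,p]$ and $[p,1]$, and then match values and derivatives at the breakpoint $t=p$. Since $\rho$ is constant on each piece, \eqref{eq:surrogate2} reduces to
\[
u''(t)=\frac{2(\rho(t)+\nu)}{a^{2}}\, u(t),
\]
which is $u''=\omega_z^{2}u$ on $(0,p)$ and $u''=\omega_Z^{2}u$ on $(p,1)$, with $\omega_z,\omega_Z$ as in the statement. I regard these formally as hyperbolic ODEs even when $\omega_z^{2}$ or $\omega_Z^{2}$ is negative, using the identifications $\cosh(is)=\cos s$, $\sinh(is)=i\sin s$ to analytically continue the formulas; the transcendental relation \eqref{eq:int_cohmix_eval} remains meaningful in either case.

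First I would write the general solution on $[0,p]$ as $u(t)=A\cosh(\omega_z t)+B\sinh(\omega_z t)$ and impose the left Neumann condition $u'(0)=0$, which forces $B=0$ and leaves $u(t)=\alpha\cosh(\omega_z t)$ for a free constant $\alpha$. Next, on $[p,1]$ I would write the general solution in the shifted basis $u(t)=C\cosh(\omega_Z(1-t))+D\sinh(\omega_Z(1-t))$ adapted to the right endpoint, so that the right Neumann condition $u'(1)=0$ forces $D=0$ (the derivative of $\cosh(\omega_Z(1-t))$ at $t=1$ vanishes, while the derivative of $\sinh(\omega_Z(1-t))$ at $t=1$ equals $-\omega_Z\neq 0$). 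Thus $u(t)=C\cosh(\omega_Z(1-t))$ on $[p,1]$.

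The remaining step is matching at $t=p$. Continuity of $u$ there gives $\alpha\cosh(\omega_z p)=C\cosh(\omega_Z(1-p))$, which yields the formula for $C$ stated in the proposition. Continuity of $u'$ at $t=p$ yields
\[
\alpha\,\omega_z\sinh(\omega_z p)=-C\,\omega_Z\sinh(\omega_Z(1-p)),
\]
where the minus sign on the right comes from differentiating $\cosh(\omega_Z(1-t))$ through the inner $-1$. Substituting the expression for $C$ and dividing through by $\alpha\,\omega_Z\cosh(\omega_z p)$ (assumed nonzero, else one is already at a trivial configuration that can be treated as a limiting case) gives
\[
\frac{\omega_z}{\omega_Z}\tanh(\omega_z p)+\tanh(\omega_Z(1-p))=0,
\]
which is exactly $f(\nu)=0$.

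There is no real obstacle here; the only thing demanding attention is sign bookkeeping when moving between the two $\cosh$-bases (one anchored at $0$, one anchored at $1$) and the sanity check that the derived dispersion relation genuinely has solutions. For instance, when $Z\gg z$, one expects the leading eigenvalue $\nu_{0}$ to lie in $(-Z,-z)$, so that $\omega_Z$ is real while $\omega_z$ is imaginary; under the substitution $\omega_z=i\tilde\omega_z$, $f(\nu)=0$ becomes $\tilde\omega_z\tan(\tilde\omega_z p)=\omega_Z\tanh(\omega_Z(1-p))$, whose positive solution in $\tilde\omega_z$ exists by an elementary intermediate-value argument. Uniqueness of the nonnegative solution $u_{0}$ (remarked after \eqref{eq:surrogate2}) then pins down both $\nu$ and, up to the overall constant $\alpha$, the displayed $u$.
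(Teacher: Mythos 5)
Your proposal is correct and follows essentially the same route as the paper: solve the constant-coefficient ODE on each subinterval, eliminate one basis function per piece using the Neumann conditions, and match $u$ and $u'$ at $t=p$ to obtain the interface constant and the dispersion relation $f(\nu)=0$ (the paper works in an exponential basis rather than your endpoint-anchored $\cosh$/$\sinh$ bases, which is purely cosmetic). The one step you take for granted that the paper justifies explicitly is the legitimacy of the $C^1$-matching itself: since $\rho$ is discontinuous at $t=p$, one must first argue---as the paper does via elliptic regularity and the one-dimensional Sobolev embedding $H^2\hookrightarrow C^1$---that eigenfunctions of the weak formulation are continuously differentiable across the interface.
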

\begin{proof}
See Appendix \ref{app:proof_int_cohmix}.
\end{proof}

Recall from above that we are only interested in the dominant mode of~\eqref{eq:surrogate2}, i.e., the eigenfunction associated with~$\nu_0$, the largest eigenvalue. Properties of the eigenproblem are discussed in Appendix~\ref{app:int_cohmix_evals}.
In particular, we show that $\nu_0 \in (-Z,-z)$, and this implies $\omega_z(\nu_0) \in \mathrm{i}\R$ and $\omega_Z(\nu_0) \in \R$. Consequently, for the associated eigenfunction~$u$ we obtain a cosine-profile on the first (coherent) part $[0,p]$ of the time interval because $\cosh(\omega_z t) = \cos(|\omega_z|t)$, and an exponentially decaying cosh-profile on the second (mixing) part~$[p,1]$.

The differences in magnitude of~$u$ will, ideally, be indicative for the difference between the regimes. In particular, we expect strong exponential decay of $u$ in a strongly mixing regime. Nevertheless, ambiguity for the intermediate times when the system is shifting from coherent to mixing, is still expected, as for most (realistic) systems this is a continuous and not an abrupt transition.

We show the dominant eigenfunction $u$ from Proposition~\ref{prop:int_cohmix} for parameters $a^2/2 \in \{ 1/\pi^2, 1/100\pi^2\}$, $p=0.25$, $z=2$, $Z=40$ in Figure~\ref{fig:surrogate_example}(a).
\begin{figure}[htb]
\centering

\begin{subfigure}{0.4\textwidth}
\includegraphics[width=\textwidth]{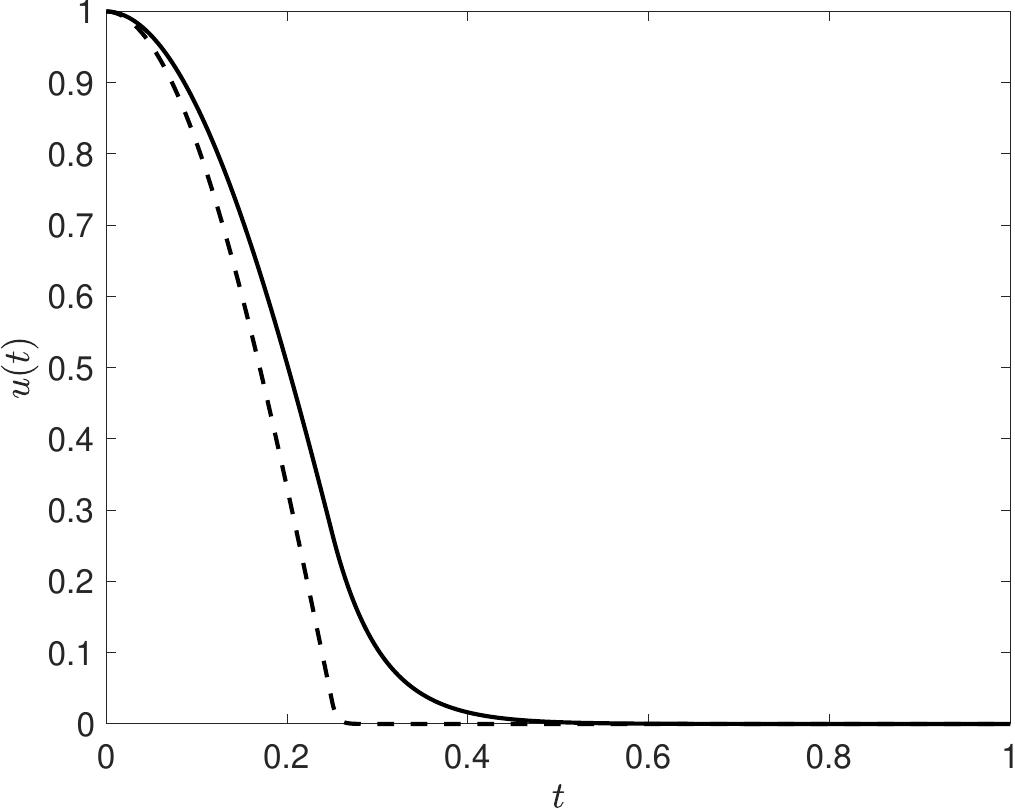}
\subcaption{}
\end{subfigure}
\hfill
\begin{subfigure}{0.465\textwidth}
\includegraphics[width=\textwidth]{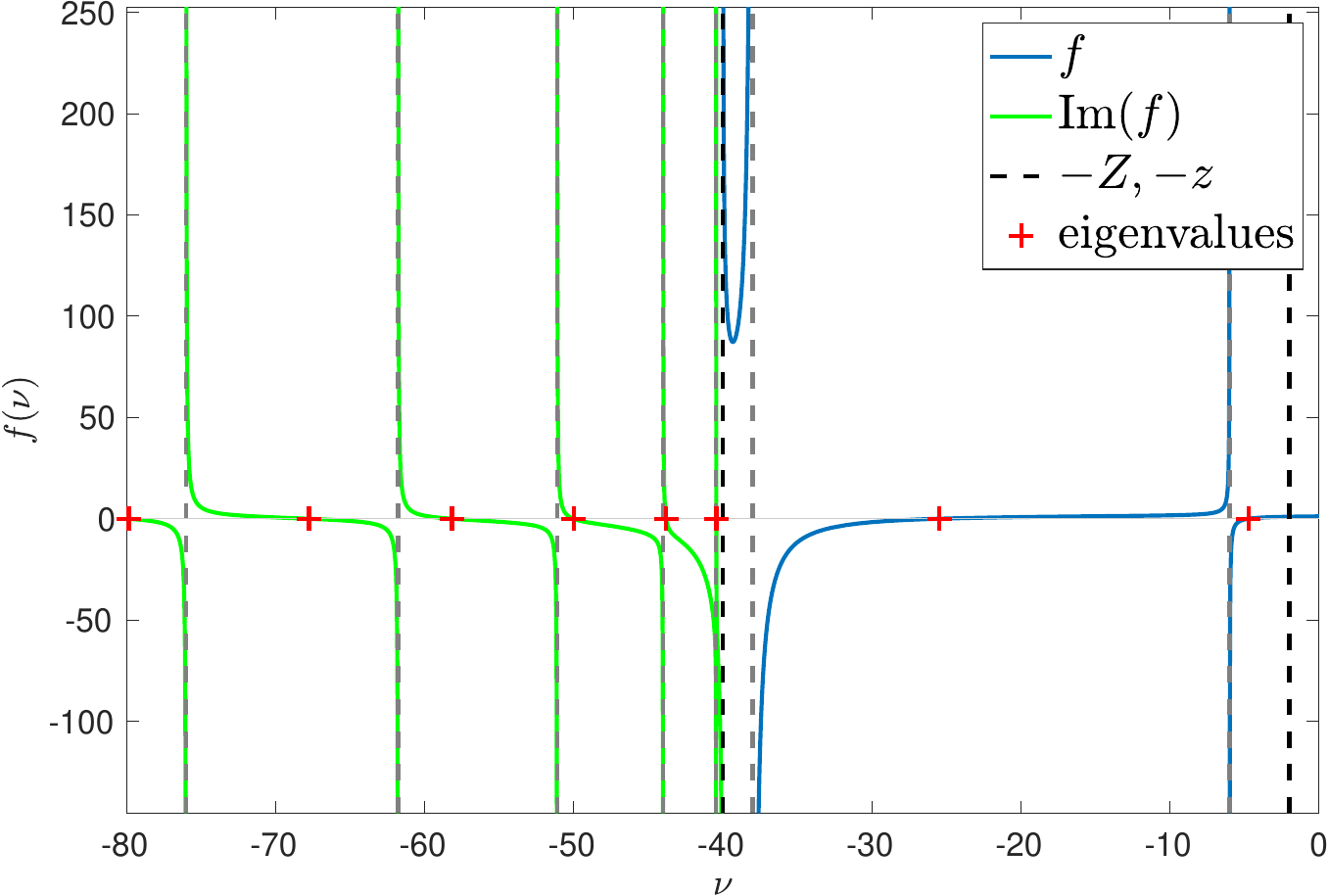}
\subcaption{}
\end{subfigure}

\caption{(a) Example solution of~\eqref{eq:surrogate2}, as in Proposition~\ref{prop:int_cohmix} below, for parameters $a^2/2 = 1/\pi^2$, $p=0.25$, $z=2$, $Z=40$ (solid).  This matches well with the full solutions shown in Figure~\ref{fig:ChiSow_gy-mxmx_slicenorms_vs_a};  note that the time axis has been linearly rescaled from $[-1,1]$ to $[0,1]$ in this figure (see (\ref{eq:surrogate_scaling})).  Decreasing $a$ to $a^2/2 = 1/100\pi^2$ (dashed) leads to a sharper transition in the surrogate solution. (b) Eigenvalues of~\eqref{eq:surrogate2} as zeros of~\eqref{eq:int_cohmix_eval} for~$a^2/2 = 1/\pi^2$. The black dashed lines indicate the values~$-Z=-40$, $-z=-2$. Gray dashed lines indicate singularities of~$f$.}
\label{fig:surrogate_example}
\end{figure}

We find that the decay of $u$ is increasingly rapid on~$[p,1]$ as $a$ decreases.
Note that this analysis assumes that $\rho$ is independent of $a$ in~\eqref{eq:surrogate}.

The surrogate model gives a good qualitative approximation, as shown in Figure~\ref{fig:ChiSow_gy-mxmx_EV4}. Subfigure (a) tests the cosine-profile by looking at whether $\smash{ \arcsin\left(\|F(t,\cdot)\|^2_{L^2}\right) }$ vs~$t$ is indeed linear for the coherent regime. We use arcsin instead of arccos to map near zero values to near zero values. Subfigure (b) tests approximately exponential decay of $\|F(t,\cdot)\|^2_{L^2}$ vs~$t$ in the mixing regime.
\begin{figure}[htb]
\centering

\begin{subfigure}{0.45\textwidth}
\includegraphics[width=\textwidth]{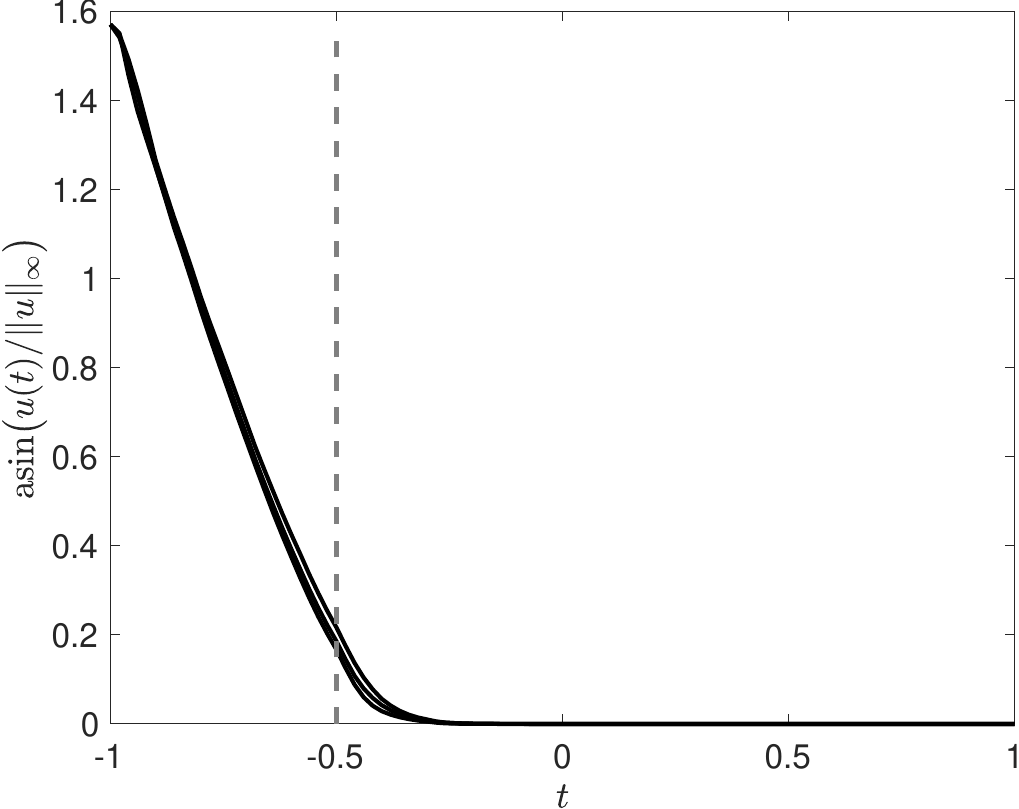}
\subcaption{}
\end{subfigure}
\hfill
\begin{subfigure}{0.45\textwidth}
\includegraphics[width=\textwidth]{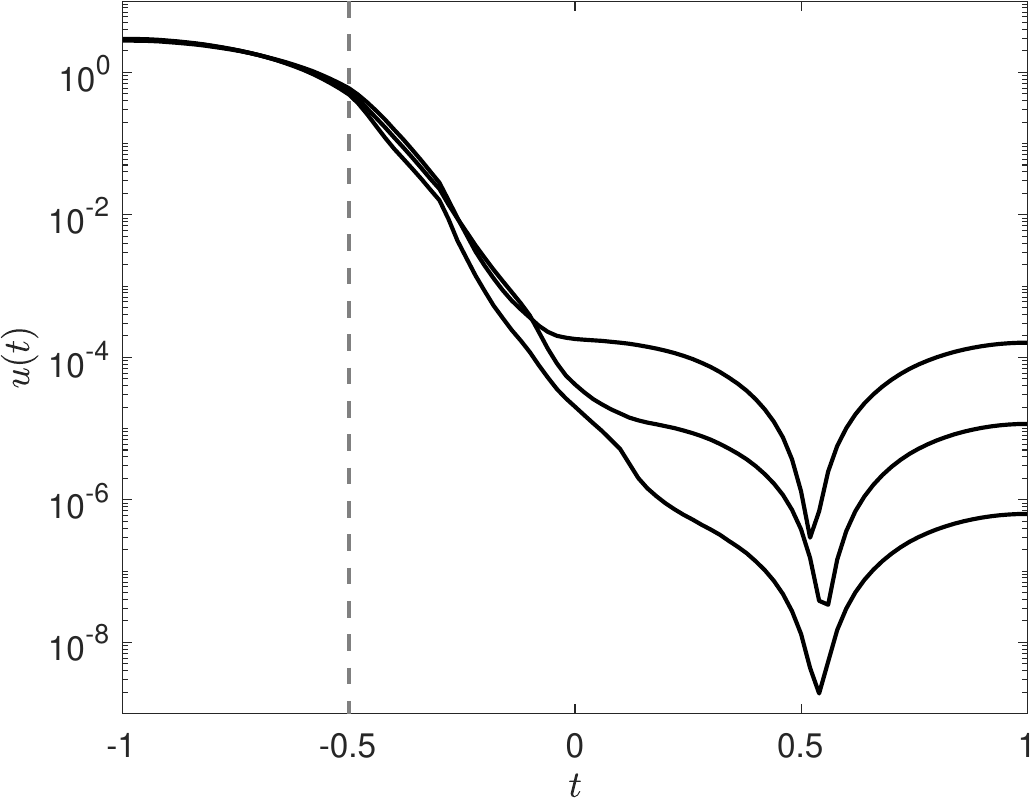}
\subcaption{}
\end{subfigure}

\caption{Analysis of the function $\smash{ u: t \mapsto \|F(t,\cdot)\|_{L^2(M)}^2 }$, for the first three spatial eigenmodes $F^\spat_k$, $k=2,3,4$, of~$\Delta_{\Goa}$ for the partially coherent Childress--Soward system with~$a=\frac{2}{\pi}$. (a) $t\mapsto \arcsin(u(t)/ \|u\|_{\infty})$. Dashed lines indicate the time $t=-0.5$, where the mixing regime starts. The approximately constant slope on $[-1,-0.5]$ indicates that $u$ is approximately a cosine on this interval where the velocity field is in the coherent regime. (b) The function $u$ depicted on a semilogarithmic scale. The strong exponential drop in the graph over $[-0.5,-1]$ indicates the onset of mixing. Compare also with Figure~\ref{fig:ChiSow_gy-mxmx_slicenorms_vs_a}.}
\label{fig:ChiSow_gy-mxmx_EV4}
\end{figure}

%%%
\section{FEM-based numerical discretisation of the inflated dynamic Laplacian}
\label{sec:FEMDL}

In this section we describe how to numerically approximate $\Delta_{\Goa}$ in (\ref{eq:LBab}).
We adopt and extend the approach of \cite{FJ18}, which derived a finite-element method discretisation of the dynamic Laplacian eigenproblem.
For $f:M\to\mathbb{R}$ we define the pushforward and pullback of $f$ under $\phi_t$ by $(\phi_t)_*f:=f\circ (\phi_t)^{-1}$ and $\phi_t^*f:=f\circ \phi_t$, respectively;  recall that~$g_t=\phi_t^*e$.
The right-hand side of (\ref{eq:LBab}) has two main components and we begin by discussing~$\Delta_{g_t}$.
We note that one has the alternative representation \cite{Fro15}, $\Delta_{g_t}= \phi_t^*\circ\Delta_{(\phi_t (M),e)}\circ(\phi_t)_*$,
where we have made explicit the fact that the Laplace operator on the right-hand side is acting on the ``future'' Riemannian manifold~$(\phi_t(M),e)$.
Similarly for $F\in H^1(\set{M}_0)$ (resp.\ $F\in H^1(\set{M}_1)$) we have $\Phi_*F=F\circ\Phi^{-1}\in H^1(\set{M}_1)$ (resp.\ $\Phi^*F=F\circ\Phi\in H^1(\set{M}_0)$).
We are now ready to construct a weak-form approximation of the eigenproblem $\Delta_{\Goa} F = \Lambda F$, where for the moment we assume homogeneous Neumann conditions on~$M$ ($M$ may also be boundaryless).
Neumann boundary conditions \cite{Fro15,FJ18} on $M$ allow us to find finite-time coherent sets that may share a boundary with $M$, while Dirichlet boundary conditions \cite{FJ18} on $M$ force the finite-time coherent sets to have boundaries away from the boundary of~$M$.
In the weak form, the only change required to solve the Dirichlet case is to change $H^1(\set{M}_0)$ to $H^1_0(\set{M}_0)$.
For $F,\tilde F\in H^1(\set{M}_0)$, multiplying our eigenproblem by $\tilde F$ and integrating both sides we obtain
\begin{equation}
\label{weakeig}
\int_0^\tau\int_M \Delta_{\Goa}F\cdot \tilde F\ d\ell\ ds=\Lambda\int_0^\tau\int_M F\cdot \tilde{F}\ d\ell\ ds.
\end{equation}
The left-hand side of (\ref{weakeig}) is
\begin{eqnarray}
\nonumber\lefteqn{ \hspace*{-5em}
\int_0^\tau \int_M \left(a^2\partial_{tt}|_{t=s}F(\cdot,x)\right)\tilde{F}(\cdot,x)\ d\ell(x)\, ds + \int_0^\tau \int_M \left(\Delta_{g_s}F(s,\cdot)\right) \tilde{F}(\cdot,s)\ d\ell\, ds}\\
\nonumber &=& \int_0^\tau \int_M \left(a^2\partial_{tt}|_{t=s}F(\cdot,x)\right)\tilde{F}(\cdot,x)\ d\ell(x)\, ds \\
\nonumber & & + \int_0^\tau \int_{\phi_s(M)}\left(\Delta_{(\phi_s(M),e)}\circ(\Phi_*F)(s,\cdot)\right)\cdot(\Phi_*\tilde{F})(s,\cdot)\ d\ell\, ds\\
%\Phi_s^*\circ\Delta_{(\phi_s(M),e)}\circ(\phi_s)_*)F\cdot \tilde{F}\ d\ell\ ds\\
%\nonumber&=&\int_0^\tau \int_M a^2\partial_{tt}|_{t=s}F\cdot \tilde{F}\ d\ell\ ds +
\label{femhalfleft1}&=&-a^2\int_0^\tau \int_M \left(\partial_{t}|_{t=s}F(\cdot,x)\right)\cdot\left( \partial_{t}|_{t=s}\tilde{F}(\cdot,x)\right) \ d\ell(x)\, ds \\
\label{femhalfleft2} & & - \int_0^\tau \int_{\phi_s(M)} \nabla_x(\Phi_*F)\cdot \nabla_x(\Phi_*\tilde{F})\ d\ell\, ds.
\end{eqnarray}
Similarly, the right-hand side of (\ref{weakeig}) is
\begin{equation}
\label{femhalfright}
\Lambda\int_0^\tau \int_{\phi_s(M)} \Phi_*F\cdot \Phi_*\tilde{F}\ d\ell\ ds.
\end{equation}
Because of the differing roles of time and space we assume that our approximating basis $V\subset H^1([0,\tau]\times M)$ contains functions of the form $\xi(t)\eta(x)$, $t\in[0,\tau], x\in M$.
This ansatz enables a convenient decomposition across time, where at each time fibre we can leverage the spatial constructions from \cite{FJ18}.
It also allows for simple adjustment of the parameter $a$, without having to recompute any integrals.
We suppose our approximation space is built in this way using a finite number of basis elements $\xi_i$, $i=0,\ldots,T$ and $\eta_k$,~$k=1,\ldots,N$.
Inserting $F(t,x) = \xi_i(t)\eta_k(x), \tilde{F}(t,x) = \xi_j(t) \eta_l(x)$ into (\ref{femhalfleft1}) and \eqref{femhalfleft2}, we have
\begin{eqnarray}
\nonumber \eqref{femhalfleft1} + \eqref{femhalfleft2} &=& -a^2\int_0^\tau \int_M \eta_k\,\partial_{t}|_{t=s} \xi_i\cdot \eta_l\,\partial_{t}|_{t=s}\xi_j \ d\ell\, ds\\
	& & - \int_0^\tau \int_{\phi_s(M)} \xi_i(s)\nabla_x((\phi_s)_*\eta_k)\cdot \xi_j(s)\nabla_x((\phi_s)_*\eta_l)\ d\ell\, ds \\
\label{femfulltime}&=&a^2\int_0^\tau -\xi'_i(s)\cdot \xi'_j(s) \underbrace{\int_{\phi_s(M)} (\phi_s)_*\eta_k\cdot(\phi_s)_*\eta_l \ d\ell}_{=:M^s_{kl}}\, ds \\
\label{femfullspace}\qquad &&+ \int_0^\tau \!\! \xi_i(s)\xi_j(s) \underbrace{\int_{\phi_s(M)} \!\!\!\!-\nabla_x((\phi_s)_*\eta_k)\cdot \nabla_x((\phi_s)_*\eta_l)\ d\ell}_{=:D^s_{kl}}\, ds=:\mathbf{D}_{ij,kl}
\end{eqnarray}
Similarly, we have
\begin{equation}
\label{femfullright}
\frac{1}{\Lambda} (\ref{femhalfright}) =\int_0^\tau \xi_i(s) \xi_j(s) \underbrace{\int_{\phi_s(M)} ((\phi_s)_*\eta_k)\cdot ((\phi_s)_*\eta_l)\ d\ell}_{=M^s_{kl}}\, ds=:\mathbf{M}_{ij,kl}
\end{equation}
We now fix the $\xi_i$, $i=0,\ldots,T$ and $\eta_k$, $k=1,\ldots,N$ to be the standard piecewise linear one-dimensional and $d$-dimensional hat functions, respectively.
More precisely, we partition $[0,\tau]$ into intervals with endpoints $0=t_0 < t_1 < \cdots < t_T=\tau$ and let $\xi_i$ be the nodal hat function centred at node~$t_i$.
Given vertices $x_k\in M$, we mesh $M$ into simplices and define $\eta_k$ as the nodal hat function with node $x_k$;  this mesh is used to create $M^{t_0}_{kl}$ and~$D^{t_0}_{kl}$.
To define $M^{t_i}$ and $D^{t_i}$, $i=0,\ldots,T$ we refer the reader to \cite{FJ18}.
For each $i=0,\ldots,T$, the matrices $M^{t_i}_{kl}$ are the standard mass matrices from the finite-element method, and the $D^{t_i}_{kl}$ are the modified stiffness matrices discussed in \cite{FJ18}, and can be efficiently computed.

To estimate $\mathbf{D}_{ij,kl}$ and $\mathbf{M}_{ij,kl}$, for $s\in [t_i,t_{i+1}]$ we linearly interpolate to estimate $M^s\approx M^{t_i}+((s-t_i)/(t_{i+1}-t_i))(M^{t_{i+1}}-M^{t_{i}})$, similarly $D^s\approx D^{t_i}+((s-t_i)/(t_{i+1}-t_i))(D^{t_{i+1}}-D^t_i)$.
Using these estimates, all that remains is to analytically evaluate the one-dimensional integrals (\ref{femfulltime})--(\ref{femfullspace}) and (\ref{femfullright}).
We omit the elementary, but lengthy details and present here the resulting formulae, where we specialise to $t_i=i\tau/T$ and set $h=\tau/T$.
Note that because the one-dimensional functions $u_i$ only overlap when $i=j$ or $|i-j|=1$, it is only for these combinations of $i$ and $j$ that we obtain nonzero integrals.

\begin{equation}
\label{boldDeqn}
\renewcommand{\arraystretch}{1.25}
\mathbf{D}_{ij,kl}=\left\{
  \begin{array}{ll}
    -\frac{a^2}{2h}(M^{t_0}_{kl}+M^{t_1}_{kl})+\frac{h}{12}(3D^{t_0}_{kl}+D^{t_{i+1}}_{kl}), & \hbox{$i=j=0$;} \\
    -\frac{a^2}{2h}(M^{t_{i-1}}_{kl}+2M^{t_i}_{kl}+M^{t_{i+1}}_{kl}) & \multirow{2}{*}{$1 \le i=j \le T-1$;} \\
    \phantom{+} + \frac{h}{12}(D^{t_{i-1}}_{kl}+6D^{t_i}_{kl}+D^{t_{i+1}}_{kl}), & \\
    -\frac{a^2}{2h}(M^{t_T}_{kl}+M^{t_{T-1}}_{kl})+\frac{h}{12}(3D^{t_T}_{kl}+D^{t_{T-1}}_{kl}), & \hbox{$i=j=T$;} \\
    \frac{a^2}{2h}(M^{t_i}_{kl}+M^{t_{i+1}}_{kl})+\frac{h}{12}(D^{t_i}_{kl}+D^{t_{i+1}}_{kl}), & \hbox{$j=i+1, i\le T-1$.}
  \end{array}
\right.
\end{equation}
and
\begin{equation}
\label{boldMeqn}
\renewcommand{\arraystretch}{1.25}
\mathbf{M}_{ij,kl}=\left\{
  \begin{array}{ll}
    \frac{h}{12}(3M^{t_0}_{kl}+M^{t_{i+1}}_{kl}), & \hbox{$i=j=0$;} \\
    \frac{h}{12}(M^{t_{i-1}}_{kl}+6M^{t_i}_{kl}+M^{t_{i+1}}_{kl}), & \hbox{$1 \le i=j \le T-1$;} \\
    \frac{h}{12}(3M^{t_T}_{kl}+M^{t_{T-1}}_{kl}), & \hbox{$i=j=T$;} \\
    \frac{h}{12}(M^{t_i}_{kl}+M^{t_{i+1}}_{kl}), & \hbox{$j=i+1, i\le T-1$.}
  \end{array}
\right.
\end{equation}
The values for $j=i-1, i=1,\ldots,T$ are identical to the values for $j=i+1$ by symmetry.
Thus, numerically we solve the sparse, symmetric eigenproblem $\mathbf{D}\mathbf{w}=\lambda\mathbf{M}\mathbf{w}$, where $\mathbf{w}\in\mathbb{R}^{(T+1) N}$.
An approximate eigenfunction $F$ is then reconstructed as $F(t,x)=\sum_{i=0}^T\sum_{k=1}^N \mathbf{w}_{i,k}\xi_i(t)\eta_k(x)$.

%%%
\section{Numerical example}
\label{sec:examples}

%%%
\subsection{The Childress--Soward ``cat's-eye'' flow}
\label{ssec:ChiSow}

We consider the two-dimensional velocity field~\cite{ChSo89} $v:\mathbb{T}^2\to\mathbb{R}^2$, parameterised by $A>0$ and $-1\le r\le 1$:
\begin{equation} \label{eq:ChiSow}
v = A\cdot \left(\frac{\partial \psi}{\partial y}, -\frac{\partial \psi}{\partial x}\right), \mbox{ with streamfunction } \psi(x,y) = \sin x \sin y + r \cos x \cos y,
\end{equation}
where $\mathbb{T}^2$ is identified with~$2\pi S^1\times 2\pi S^1$, and $S^1$ is the circle of circumference 1.
For $|r| \approx 1$ the flow is a diagonal shear and for $r \approx 0$ the flow has four vortices; otherwise it possesses an intermediate ``cat's-eye'' structure.
Figure~\ref{fig:ChiSow_streamfcn} shows streamfunctions of the flow for different values of~$r$.

\begin{figure}[htb]
\centering

\includegraphics[width = 0.24\textwidth]{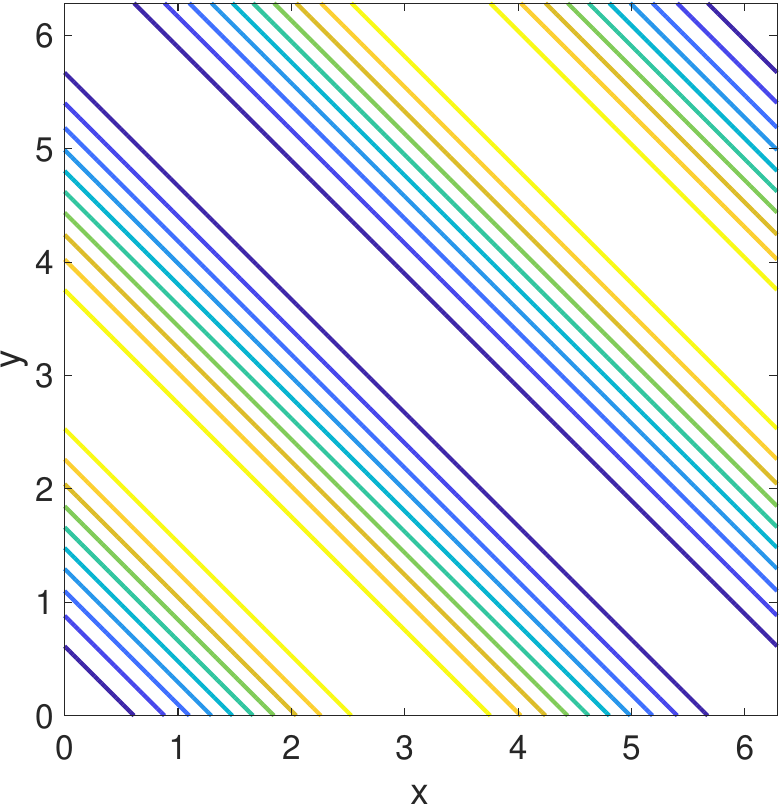}
\hfill
\includegraphics[width = 0.24\textwidth]{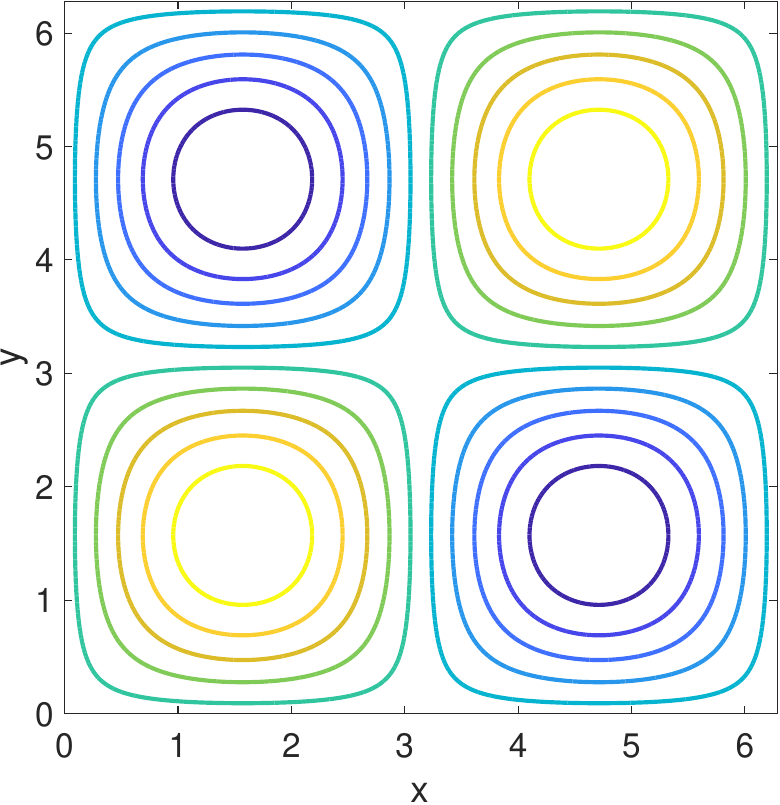}
\hfill
\includegraphics[width = 0.24\textwidth]{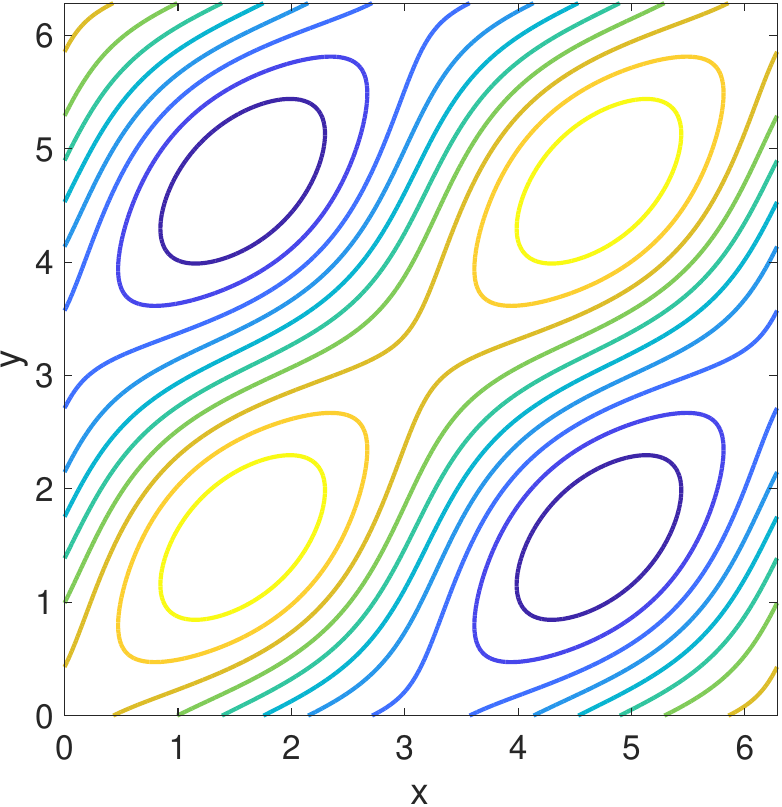}
\hfill
\includegraphics[width = 0.24\textwidth]{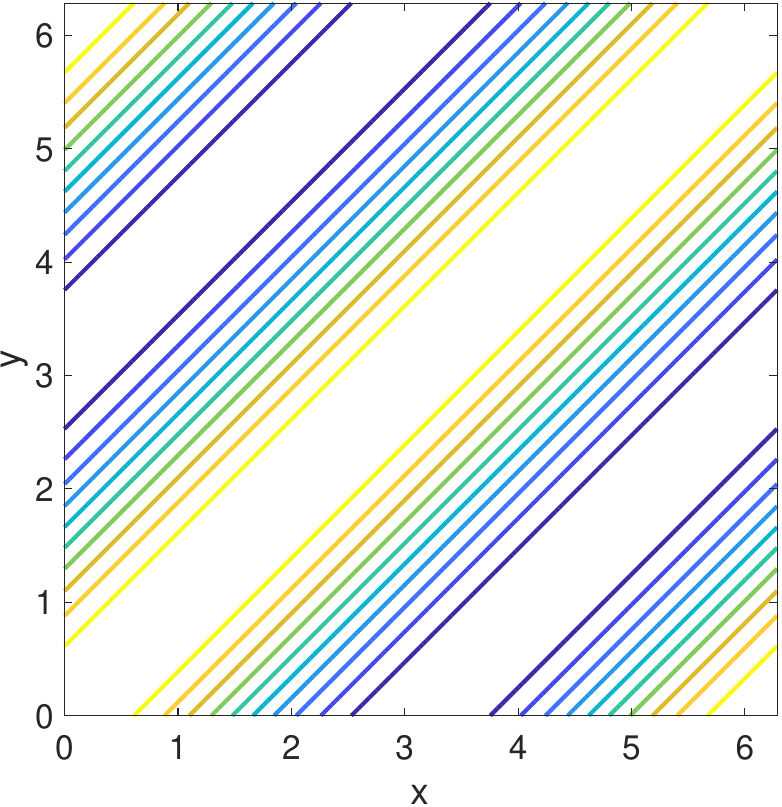}

\caption{Streamfunction contour lines of the Childress--Soward flow~\eqref{eq:ChiSow} for $r = -1,0,0.5,1$, left to right. Light yellow colors indicate larger values. At any point, the velocity field $v$ is tangential to contour lines of the streamfunction.}
\label{fig:ChiSow_streamfcn}
\end{figure}
%If the parameter $r$ is varied in time, the flow can exhibit behavior with different levels of complexity. In particular, if it is varied back and forth between $\pm 1$, the resulting flow becomes mixing. Throughout, we will consider this flow on the time interval~$[-1,1]$.

%%%
\subsection{Extracting semi-material coherent sets from a flow exhibiting both coherent and incoherent regimes}
\label{ssec:ChiSow_gy-mxmx}

We consider a non-autonomous Childress--Soward flow~\eqref{eq:ChiSow} with time-dependent amplitude $A(t) = 40\, \mathds{1}_{[-1,-0.5]}(t) + 30\, \mathds{1}_{(-0.5,1]}(t)$ and time-dependent parameter modulation
$$
r(t) = \left\{
\begin{array}{ll}
0, & t\in [-1,-0.5], \\
\mathrm{tanh}\left( 100 \cos\left( 5\pi t \right) \right), & t\in (-0.5,1].
\end{array}\right.
$$
%\new{The resulting vector field is smooth in space and piecewise continuous in time, leading to flow maps $\phi_t$ that are piecewise $C^1$ in time and vary smoothly in space.  As we do not require $g_t$ to be differentiable with respect to time, our theory fits this example; see also Remark~\ref{rem:vsmooth}.}
This flow shows coherent behavior in the four vortices throughout the time interval $[-1,-0.5]$, while four alternating (perpendicularly) shearing periods on $[-0.5,1]$, each of length approximately $0.4$, result in a mixing regime on this latter time interval.

Standard LCS methods of coherent structure detection will \emph{fail to detect the coherent behaviour} because it only lasts for the first quarter of the time duration $[-1,1]$, and is then destroyed.
We will show that \emph{we can detect the coherent regime and corresponding coherent sets using the spectrum and eigenfunctions of $\Delta_{\Goa}$}.

%We will carry out a spectral analysis of the operator $\Delta_{\Goa} = a^2\partial_{tt} + \Delta_{g_t}$.
We generate trajectories of the system on the time interval $[-1,1]$ sampled on a uniform grid of $101$ time instances, with a $35\times 35$ regular spatial grid of initial conditions. The approximation of $\Delta_{\Goa}$ is carried out by the FEM-based method described in section~\ref{sec:FEMDL}.
We select $a = 2/\pi$, based on the heuristic in section \ref{sssec:choosing_a}, and compute the leading 20 eigenvalues and corresponding eigenfunctions.
Next, we separate the temporal eigenfunctions from the spatial ones, as described in section~\ref{sssec:distinguishing}. Figure~\ref{fig:ChiSow_gy-mxmx_evals} shows the eigenvalues together with their types (spatial/temporal).
Our choice of $a$ seems to be appropriate because the leading few eigenvalues contain a small number of temporal eigenvalues within several spatial eigenvalues, the latter being our main interest.
From Figure~\ref{fig:ChiSow_gy-mxmx_evals} we immediately see that in addition to $F_1$, which is always spatial, the next spatial eigenfunctions are $F_4, F_5, F_6$, followed by a clear gap in the spectrum to the next spatial eigenfunction $F_7$.
Thus, we expect to see four dominating coherent sets of similar coherence strength due to the similarity of the values of $\Lambda_4, \Lambda_5, \Lambda_6$.

Following the discussion in section~\ref{sssec:fibrenorms}, the relative size of the $L^2$ norm between different timeslices of our three subdominant spatial eigenfunctions $F_k(t,\cdot)$, $k=4,5,6$ gives our first indication of the time durations over which we have coherent dynamics.
% in the leading spatial eigenfunctions.
Figure \ref{fig:ChiSow_gy-mxmx_slicenorms_vs_a}(a) shows relatively large and approximately equal values for these norms for each $k=4,5,6$, in a time interval approximately equal to $[-1,-0.6]$, indicating possible coherence during this time interval.
A precise time interval for coherence is not clear, but we can confidently say (i) the flow contains highly coherent sets at $t=-1$, (ii) there are likely 4 highly coherent sets encoded in $F_1,F_4,F_5,F_6$, and (iii) that the coherence of these sets is lost by $t=-0.5$.
Figure~\ref{fig:ChiSow_gy-mxmx_slicenorms_vs_a}(b) shows the slicewise $L^2$ norms $t\mapsto u(t)$ of the dominant dynamic eigenmode for the choices $a = 2^l / \pi$, $l=-1,0,1,2,3,5,7,9$, illustrating that the best choices of $a$ lie between $1/\pi$ and $4/\pi$, consistent with our heuristics in sections \ref{sssec:choosing_a} and~\ref{sssec:distinguishing}.

Figure~\ref{fig:ChiSow_gy-mxmx_EVs} shows timeslices of the first three subdominant spatial eigenmodes $F^\spat_k$, $k=2,4,5$. The results for $k=3$ look analogously to those for $k=2$, only rotated by 90 degrees.
In this example, these correspond to the indices~$k=4,5,6$ in the global order; i.e.\ $F_2^\spat=F_4$, $F_4^\spat=F_6$, and~$F_5^\spat=F_7$.
\begin{figure}[htbp]
\centering
\begin{subfigure}{\textwidth}
\includegraphics[width=0.19\textwidth]{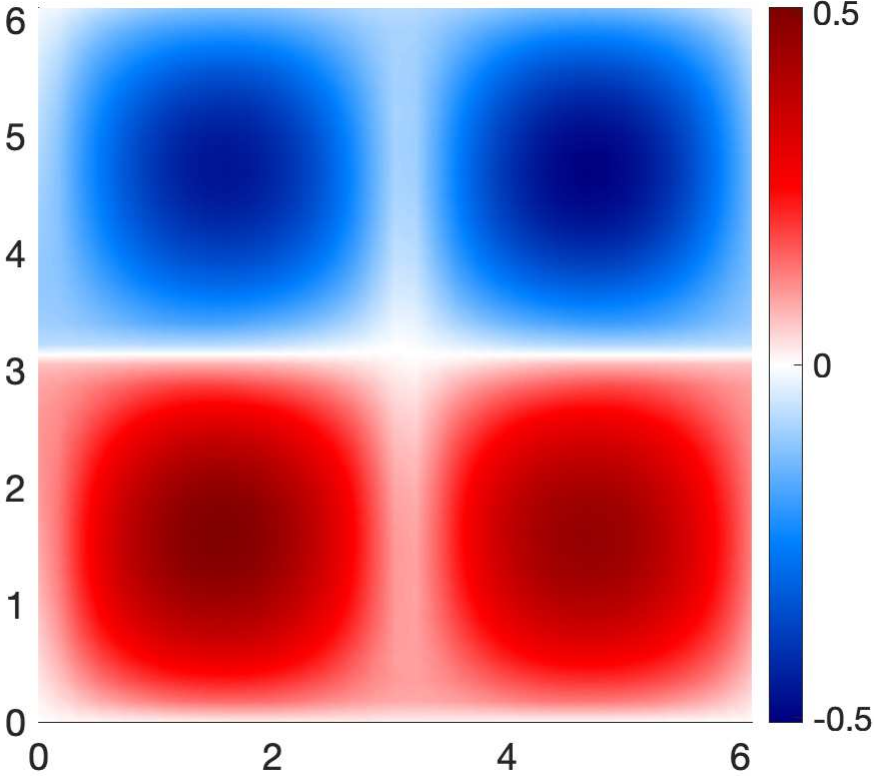}
\hfill
\includegraphics[width=0.19\textwidth]{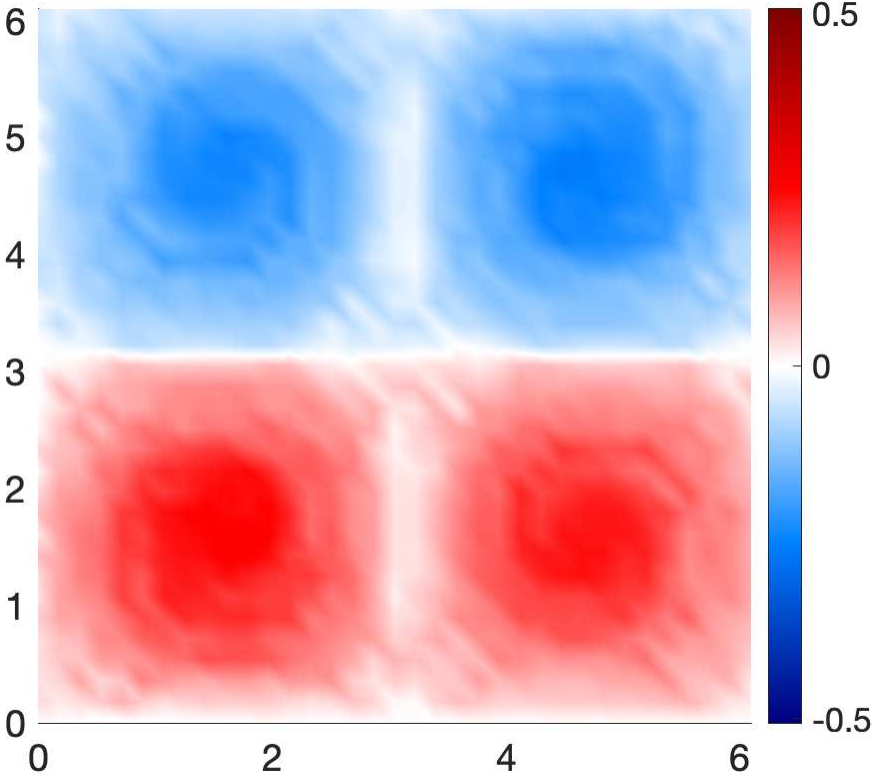}
\hfill
\includegraphics[width=0.19\textwidth]{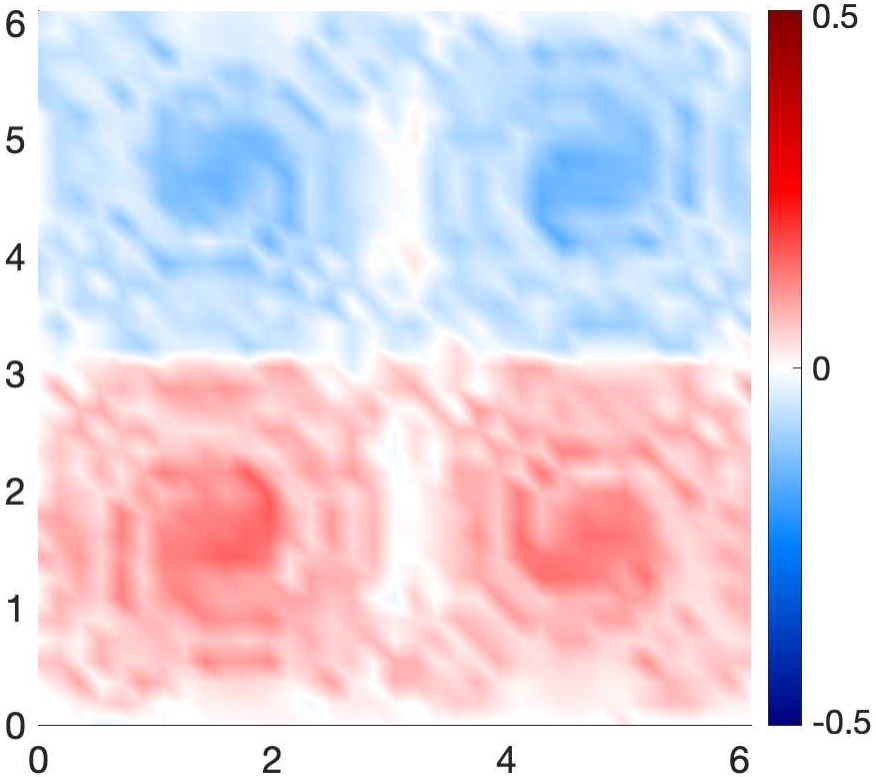}
\hfill
\includegraphics[width=0.19\textwidth]{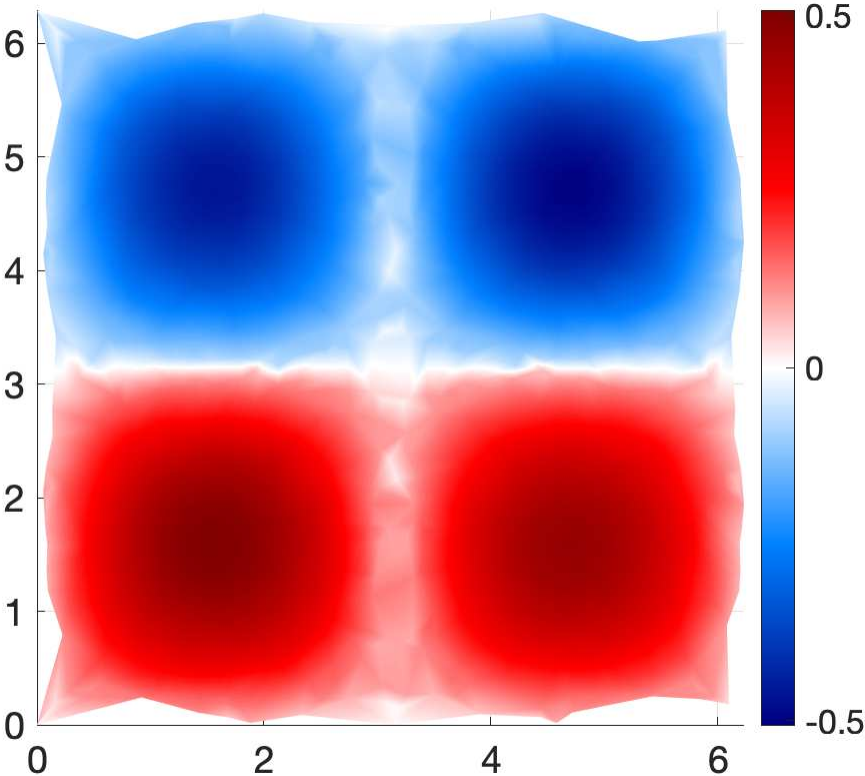}
\hfill
\includegraphics[width=0.19\textwidth]{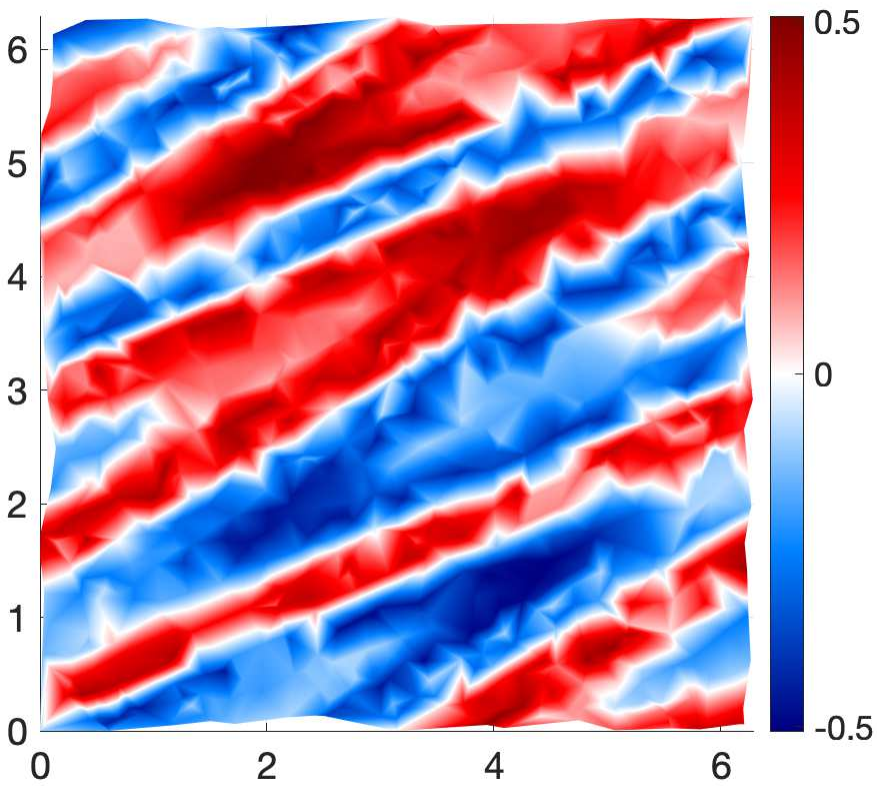}

\subcaption{Spatial mode $k=2$}
\end{subfigure}

\begin{subfigure}{\textwidth}
\includegraphics[width=0.19\textwidth]{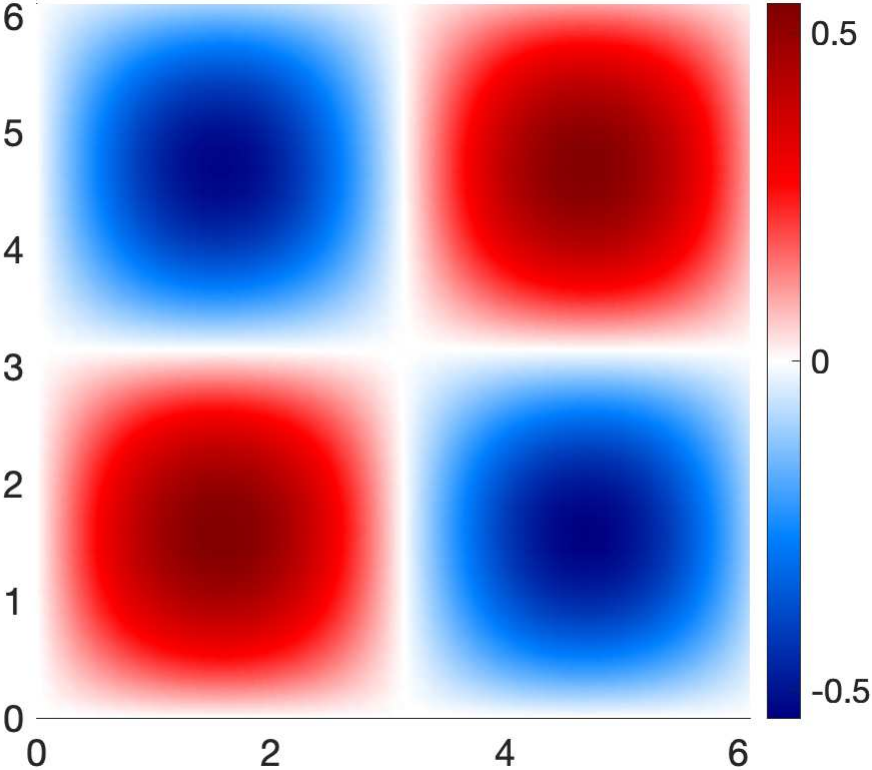}
\hfill
\includegraphics[width=0.19\textwidth]{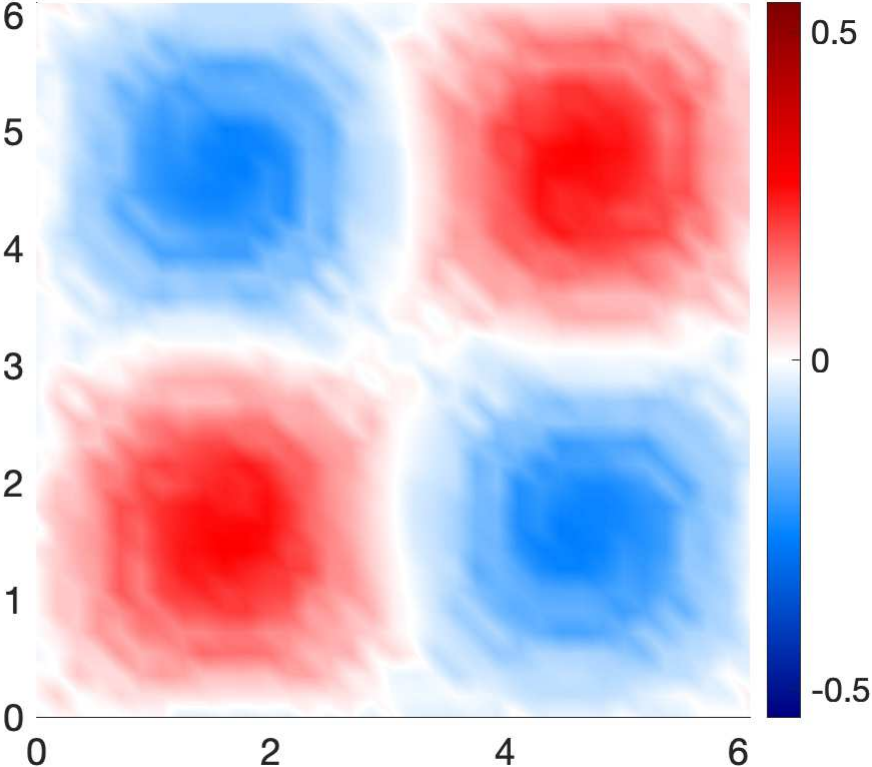}
\hfill
\includegraphics[width=0.19\textwidth]{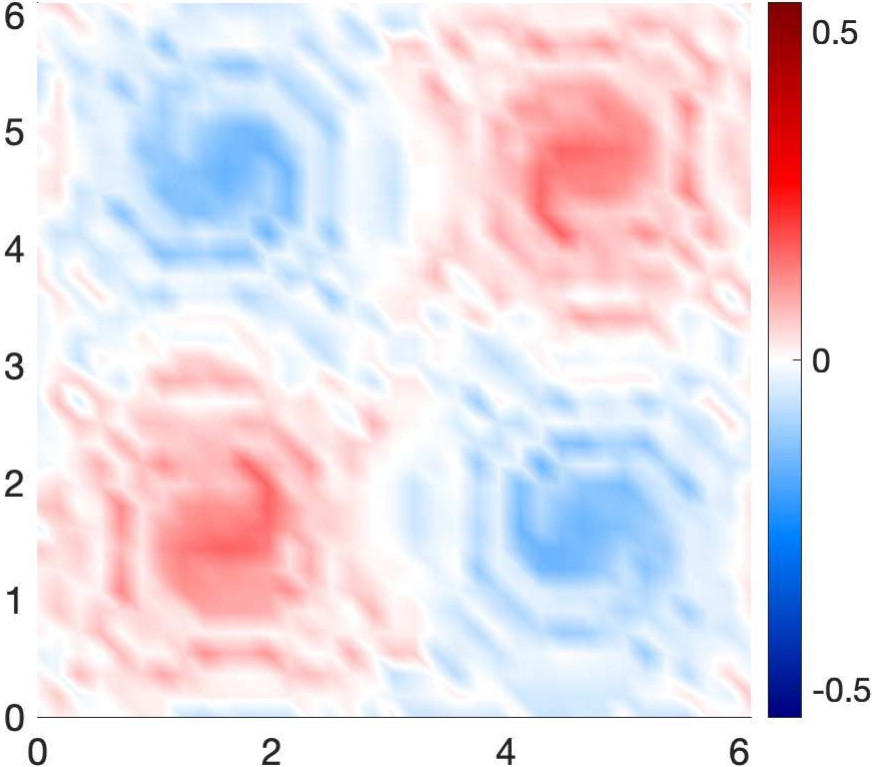}
\hfill
\includegraphics[width=0.19\textwidth]{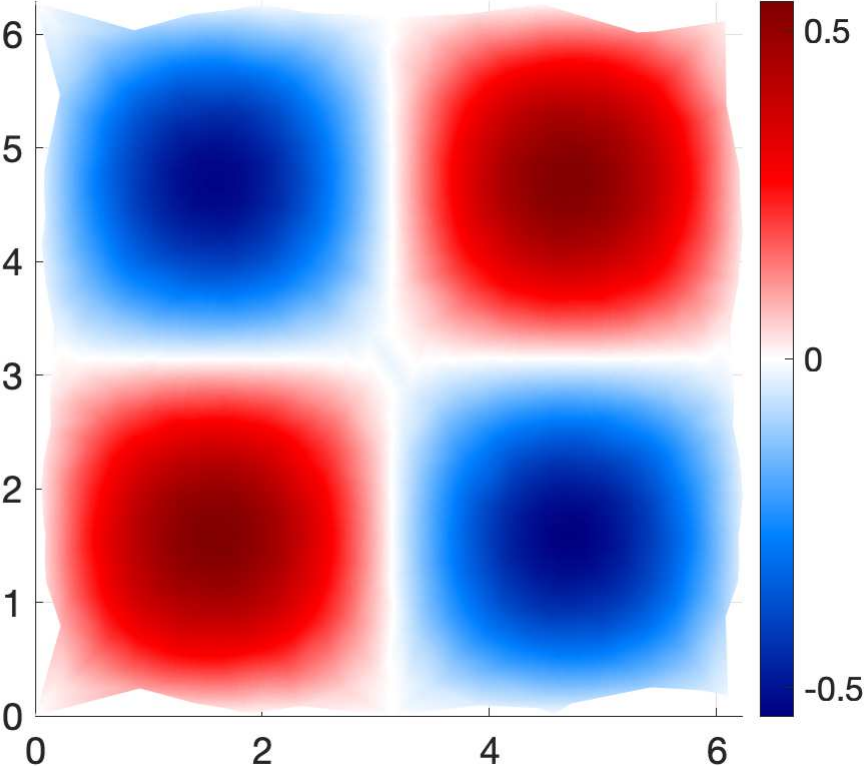}
\hfill
\includegraphics[width=0.19\textwidth]{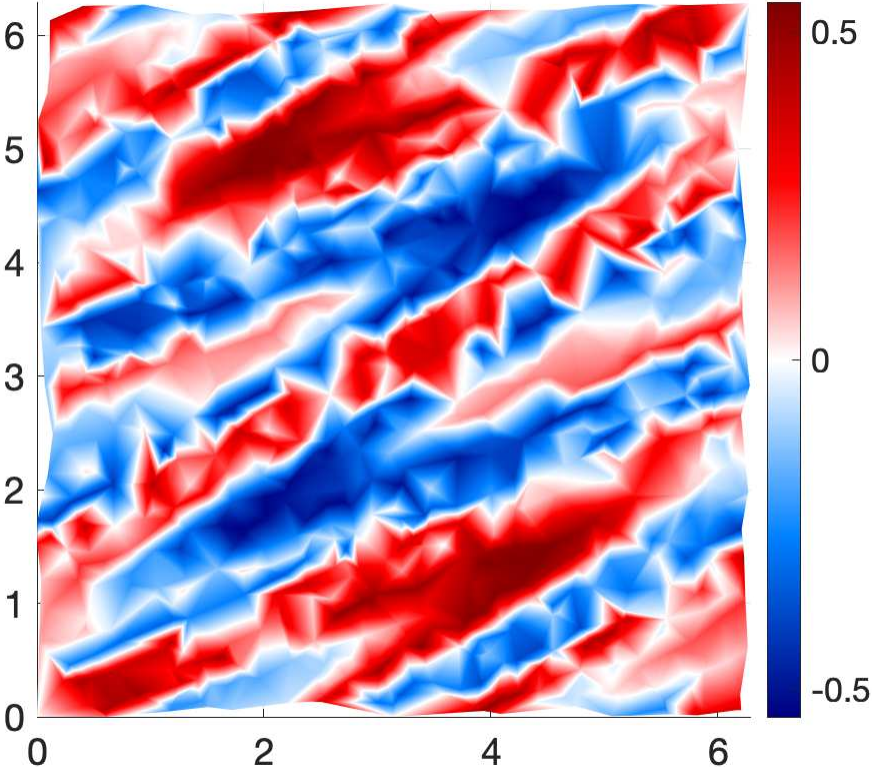}

\subcaption{Spatial mode $k=4$}
\end{subfigure}

\begin{subfigure}{\textwidth}
\includegraphics[width=0.19\textwidth]{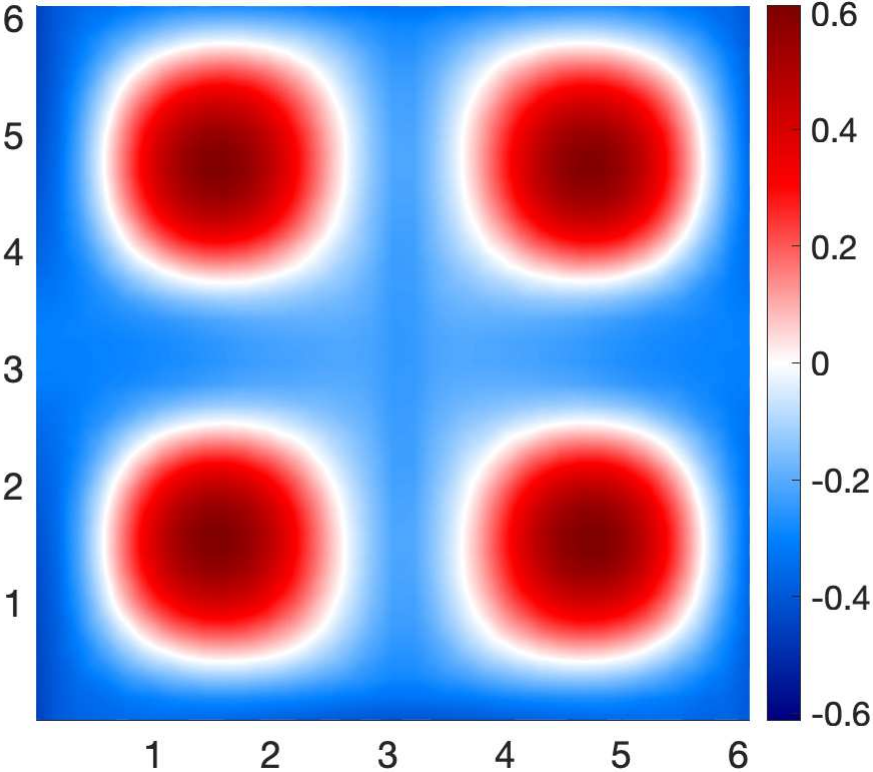}
\hfill
\includegraphics[width=0.19\textwidth]{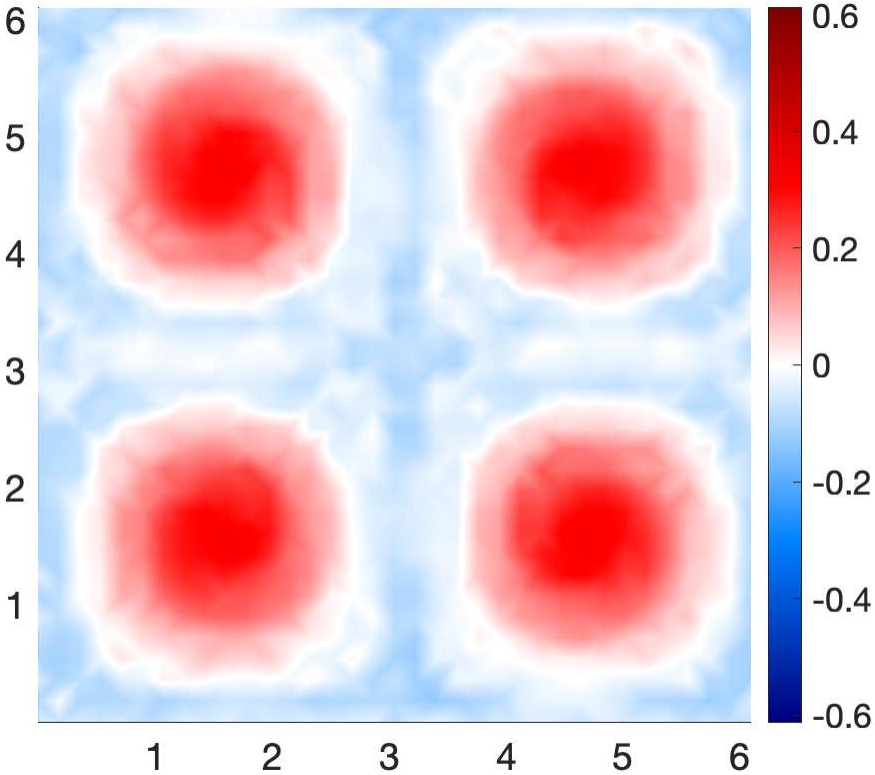}
\hfill
\includegraphics[width=0.19\textwidth]{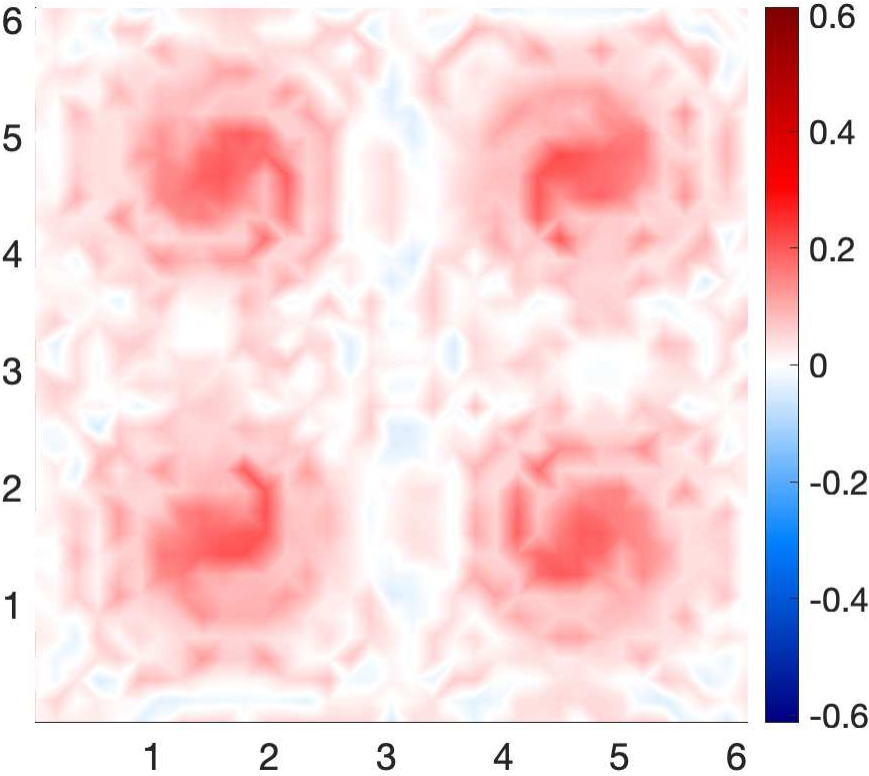}
\hfill
\includegraphics[width=0.19\textwidth]{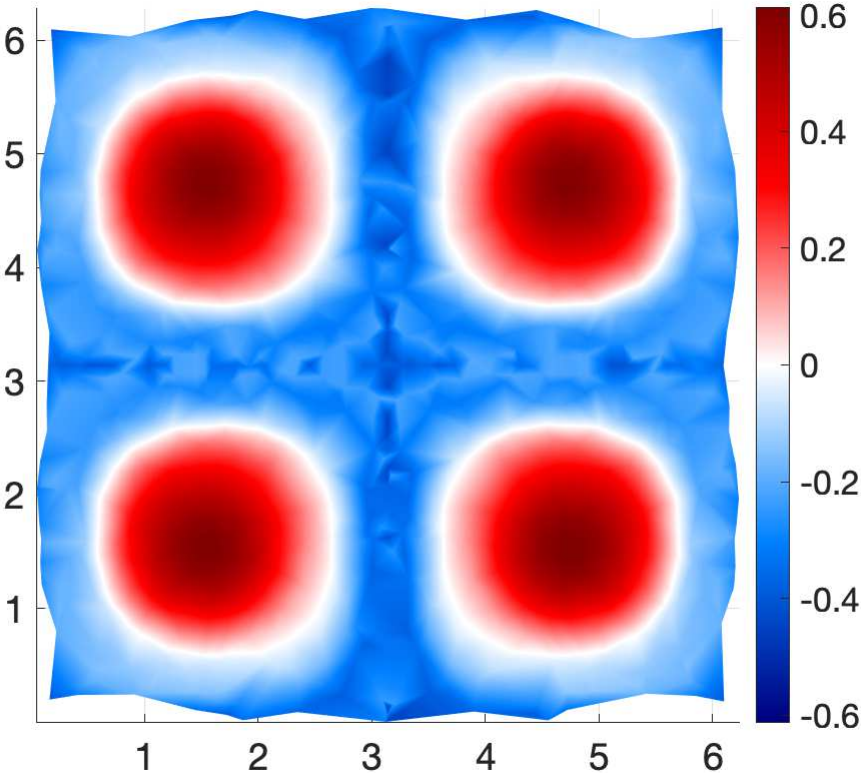}
\hfill
\includegraphics[width=0.19\textwidth]{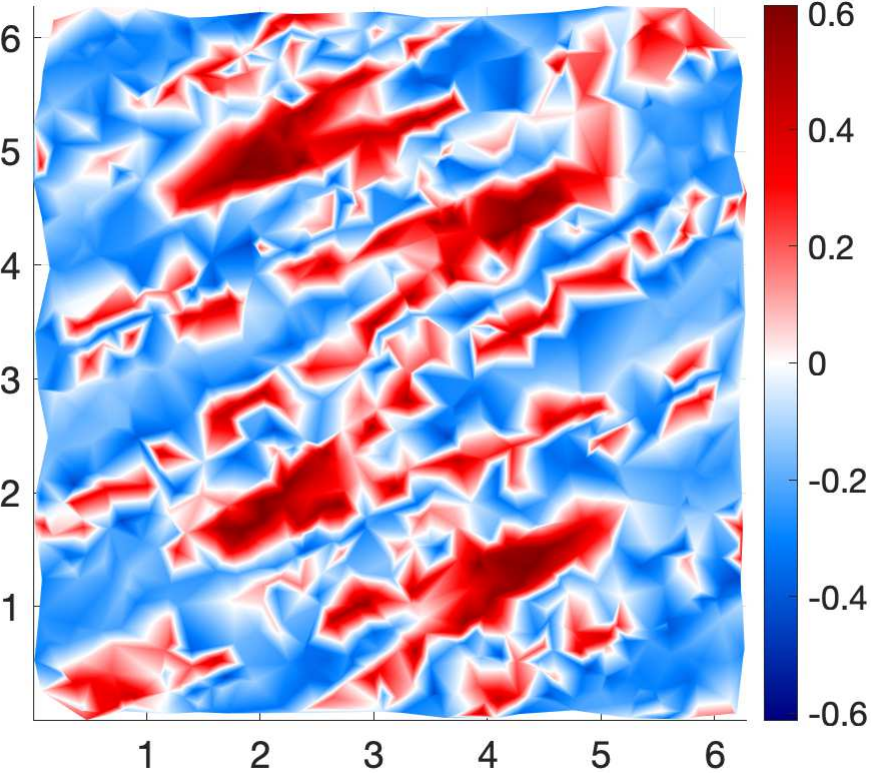}

\subcaption{Spatial mode $k=5$}
\end{subfigure}

\begin{subfigure}{\textwidth}
\includegraphics[width=0.19\textwidth]{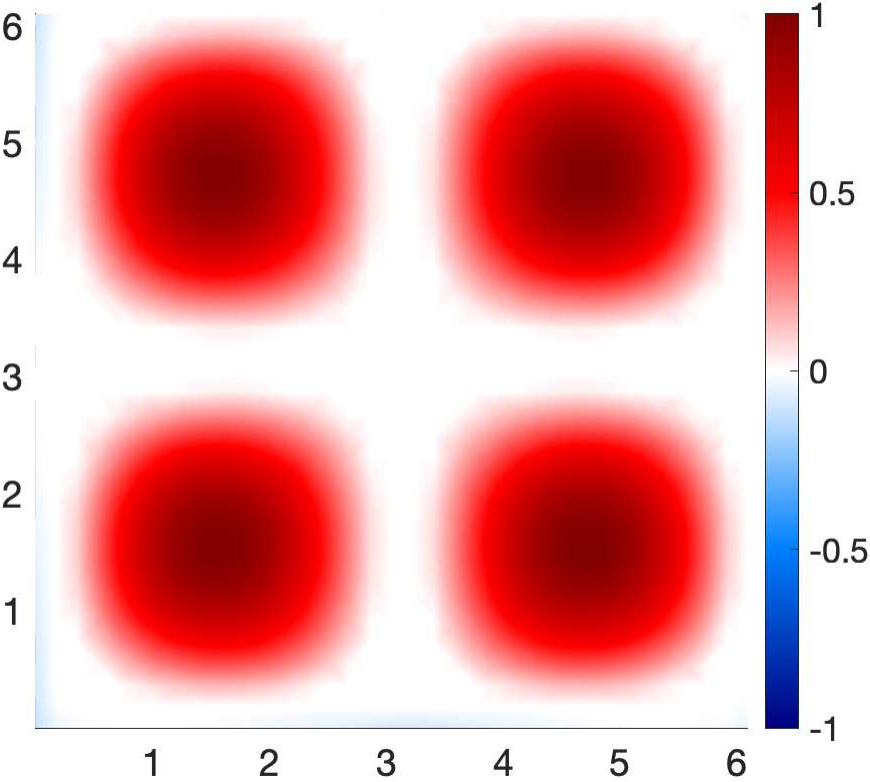}
\hfill
\includegraphics[width=0.19\textwidth]{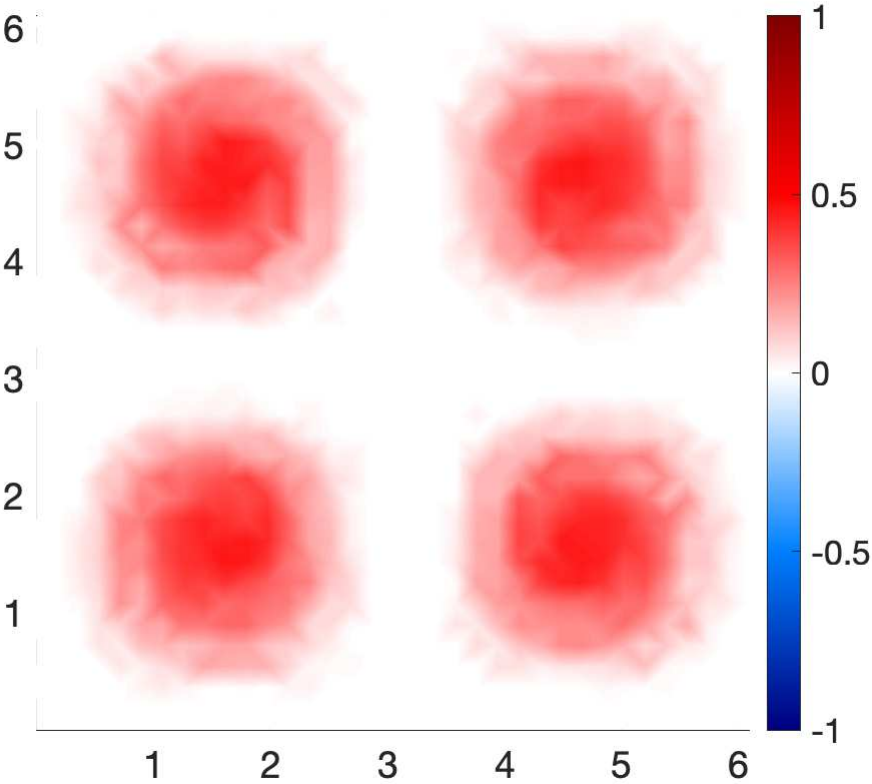}
\hfill
\includegraphics[width=0.19\textwidth]{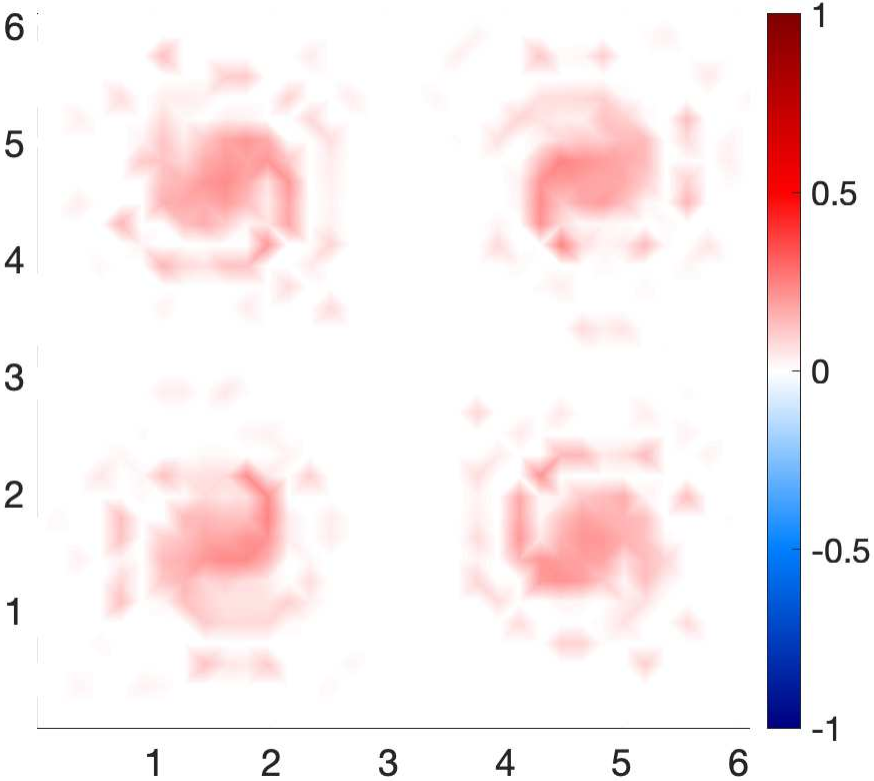}
\hfill
\includegraphics[width=0.19\textwidth]{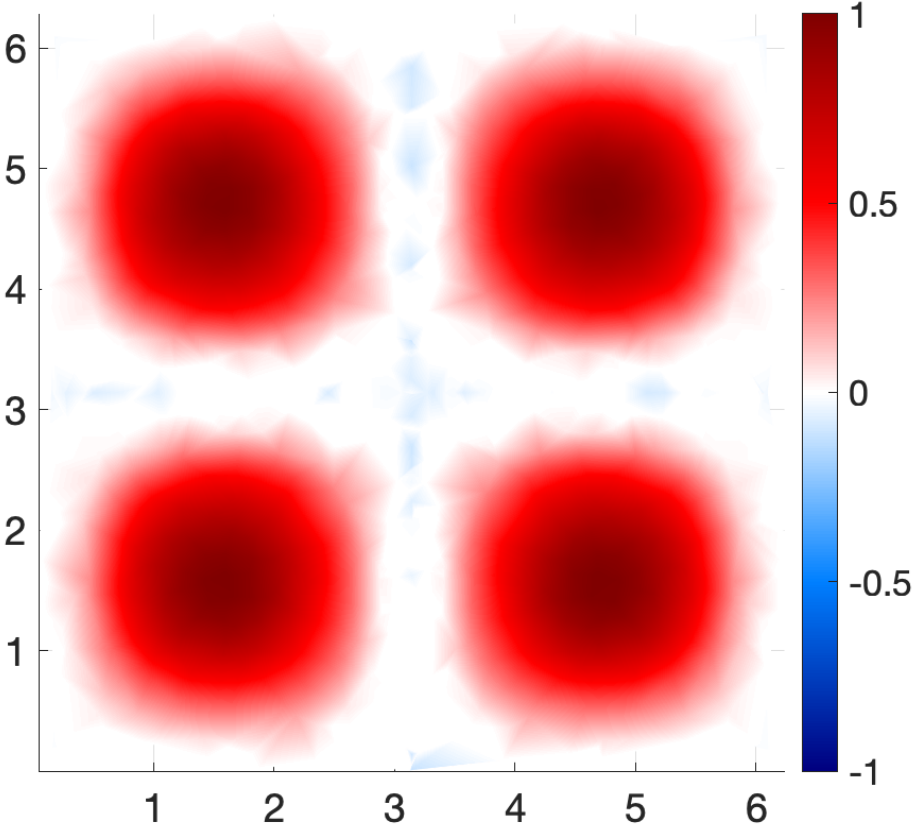}
\hfill
\includegraphics[width=0.19\textwidth]{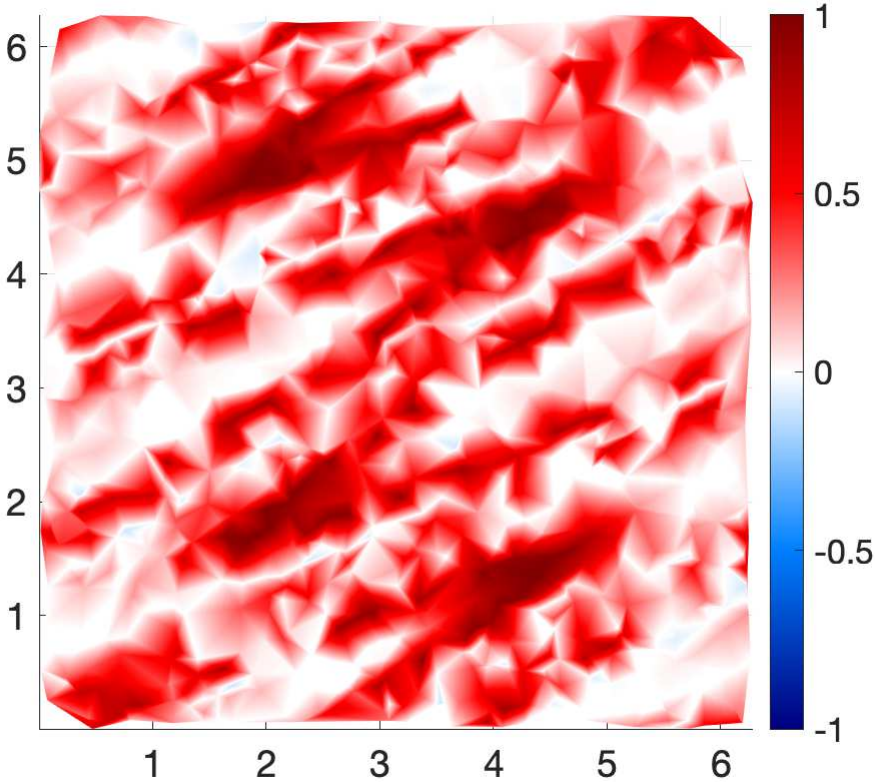}

\subcaption{Pointwise maximum of SEBA vectors}
\end{subfigure}

\caption{Visualization of timeslices $F_k^{\spat}(t,\cdot)$ of spatial eigenfunctions of $\Delta_{\Goa}$, for $k=2,4,5$ (rows (a), (b), and (c), respectively).
Colours indicate the value of the time slice eigenfunction $F_k^{\spat}$.
The first three columns show slices of a function on the $t^{\mathrm{th}}$ time fibre of $\set{M}_0$ for~$t=-1,-0.5, -0.4$, respectively. The last two columns depict $F_k^{\spat}(-1,\cdot)$ evolved forward to time fibres in $\set{M}_1$ at $t=-0.5$ and $-0.4$;  see the main text for a precise description.
%the first timeslice of these spatial modes on evolving trajectories at different times. The color of a point is in every case indicating the value of the eigenfunction $F_k$ at initial time at the initial condition of the trajectory. That is, we depict the value of $F_k(-1,x)$ at positions $\phi_t x$ at times $t=-0.5, -0.4$ for all $35^2$ initial conditions~$x$.
Row (d) shows the images corresponding to the previous rows for the SEBA superposition~$S_{\mathrm{max}}$.}
\label{fig:ChiSow_gy-mxmx_EVs}
\end{figure}
We fix the colorscale from the first timeslice $F_k^\spat(-1,\cdot)$ as the slice-wise norms maximize there and thus indicate coherence.
The first column in Figure \ref{fig:ChiSow_gy-mxmx_EVs} displays the eigenvectors $F_k^{\spat}$ for $k=2,4,5$, in the first three rows, respectively.
An automated way to separate the different coherent sets encoded by groups of eigenvectors is implemented in the Sparse Eigenbasis Approximation (SEBA) algorithm~\cite{FrRoSa19}.
It computes a rotation of leading eigenvectors such that the resulting rotated vectors are maximally sparse. %Applied to our first four discrete spatial eigenfunctions $F_k$ (represented as vectors in $\R^{101\cdot 35\cdot 35}$),
SEBA is applied to the first four nontrivial spatial eigenfunctions $F_2^\spat,\ldots,F_5^\spat$, to produce $S_1,\ldots,S_4$, which are functions of $t$ and~$x$.
Each of these SEBA functions should be supported on a \emph{single} semi-material coherent set (in space-time) and the value represents the relative strength of membership in one of the four coherent sets.
Note that in contrast to \cite{FrRoSa19} we do not use the leading (constant) spatial eigenfunction when applying SEBA because in the present setting we do not expect the union of the semi-material coherent sets in the time-expanded set $\set{M}_0$ to be all of $\set{M}_0$.
Having separated the semi-coherent sets, we define $S_{\max}:=\max\{S_1,\ldots,S_4\}$ into a single function via superposition.
The final row of the first column in Figure \ref{fig:ChiSow_gy-mxmx_EVs} shows $S_{\max}(-1,\cdot)$.

Column 2 of Figure \ref{fig:ChiSow_gy-mxmx_EVs} displays $F_2^\spat(-0.5,\cdot),F_4^\spat(-0.5,\cdot),F_5^\spat(-0.5,\cdot),$ and \linebreak[4] $S_{\max}(-0.5,\cdot)$, respectively.
Notice that by time $t=-0.5$, the point at which the dynamics enters its mixing phase, the relative magnitude of the eigenfunctions and SEBA superposition has begun to decrease;  compare this with Figure \ref{fig:ChiSow_gy-mxmx_slicenorms_vs_a}(a).
The third column of Figure \ref{fig:ChiSow_gy-mxmx_EVs} is the same as the second column, except $t$ has been advanced from $-0.5$ to $-0.4$, after which the mixing regime begins.
A significant decrease in the magnitude of the eigenfunctions and SEBA superposition can now be seen, in line with Figure \ref{fig:ChiSow_gy-mxmx_slicenorms_vs_a}(a).

The fourth column of Figure \ref{fig:ChiSow_gy-mxmx_EVs} shows the pushforwards $((\phi_{0.5})_* F_2^\spat(-1,\cdot)$, \linebreak[4] $(\phi_{0.5})_* F_4^\spat(-1,\cdot)$, $(\phi_{0.5})_* F_5^\spat(-1,\cdot)$, $(\phi_{0.5})_* S_{\max}(-1,\cdot)$, respectively.
In other words we take the functions in column 1 of Figure \ref{fig:ChiSow_gy-mxmx_EVs}, fix the colour of each point in $M$ and flow every point forward for $0.5$ time units without changing its colour.
The fourth column of Figure \ref{fig:ChiSow_gy-mxmx_EVs} clearly shows coherence on the time interval~$[-1,-0.5]$.
Advancing this flow a little further by $0.1$ time units we arrive at the fifth column of Figure~\ref{fig:ChiSow_gy-mxmx_EVs}.
One sees a dramatic difference, with rapid destruction of the coherent sets.
This is strongly consistent with the small slicewise norm values shown in the third column, indicative of a lack of coherence.

The eigenmodes $F_k^\spat$, $k=2,\ldots,5$ separate the four gyres from one another;  higher spatial modes subdivide the gyres into coherent rings and spiral-like structures (not shown).
These are less coherent than the gyre structures in Figure \ref{fig:ChiSow_gy-mxmx_EVs}, as indicated by the spectrum of~$\Delta_{G_{0,a}}$.
We summarise our approach in algorithm form below.

\paragraph{Algorithm (to extract semi-material coherent sets from trajectory data)}
\begin{compactenum}
\item Generate $N$ trajectories $\{x_t\}_{t\in\mathcal{T}}$, where $x_t\in \mathbb{R}^d$ and $\mathcal{T}\subset [0,\tau]$ has cardinality $T$.
\item Select the time diffusion strength $a$ according to the heuristic in section \ref{sssec:choosing_a}.
\item Construct the $N\times N$ matrices $M^t, D^t$ for $t\in\mathcal{T}$ as in section~\ref{sec:FEMDL}.
\item For the current choice of $a$ construct $\mathbf{M}$ and $\mathbf{D}$ as in section \ref{sec:FEMDL}, and solve the inflated dynamic Laplacian eigenproblem~$\mathbf{D}\mathbf{w} = \Lambda \, \mathbf{M}\mathbf{w}$.
\item Classify eigenfunctions as spatial or temporal by computing the temporal variance of spatial means, as in section \ref{sssec:distinguishing}.
\item If all leading nontrivial eigenfunctions are temporal, increase $a$ and return to step~4.  Aim for a value of $a$ with a small number of temporal eigenvalues early in the spectrum with most eigenvalues being spatial.
\item Plot the slicewise norms $\|F_k^{\spat}(t, \cdot)\|_2$ vs time $t$ as in Figure \ref{fig:ChiSow_gy-mxmx_slicenorms_vs_a};  large values indicate periods of coherence for the features encoded in $F_k^{\spat}$, while zero or near-zero values indicate strong mixing of the features encoded in $F_k^{\spat}$.
\item Apply SEBA to a collection of leading spatial eigenfunctions $F_k^{\spat}$, $k=2,\ldots,K+1$, where $K$ is determined by a spectral gap or other means, to produce a family of SEBA functions $S_k$, $k=1,\ldots,K$.
\item The eigenfunction families $F_k^{\spat}(t, \cdot)$ or SEBA function families $S_k(t, \cdot)$ may be spatially plotted in the pullback space $\mathbb{M}_0$ or the co-evolved space $\mathbb{M}_1$ as in Figure~\ref{fig:ChiSow_gy-mxmx_EVs}.
\end{compactenum}

\subsection{Semi-material coherent sets for a flow with multiple coherent and incoherent regimes}
\label{ssec:ChiSow_CMCM}
Between time $0$ and time~$\tau=4$ we consider a non-autonomous Childress--Soward flow~\eqref{eq:ChiSow} with time-dependent parameter modulation
\[
r(t) = \left\{
\begin{array}{ll}
0, & t\in [0,0.6] \cup [1.4,3.2], \\
\mathrm{sign}\left( \cos\left( 5\pi t \right) \right), & t\in (0.6,1.4) \cup (3.2,4]
\end{array}\right.
\]
and time-dependent amplitude $A(t) = 60\, \mathds{1}_{\{r=0\}}(t) + 40\, \mathds{1}_{\{r\neq 0\}}(t)$.
This flow shows coherent behavior in four vortices throughout the time interval $[0,0.6]$ and again in the interval $[1.4, 3.2]$.
The perpendicularly alternating shearing creates a mixing flow outside these two intervals.
\new{We have chosen a piecewise-continuous-in-time velocity field to stress test our numerics.  Such discontinuities do not affect the existence of the spectrum of the inflated dynamic Laplacian, nor the existence of eigenfunctions in a weak sense, as these quantities arise via integrals.  The finite-element approach to approximating the spectrum and eigenfunctions in Section \ref{sec:FEMDL} uses a weak formulation, which is also unaffected by piecewise-continuous inputs. The surrogate problem in Section \ref{sec:characterisation} only requires the ``relative mixing strength'' $t\mapsto\rho(t)$ to be integrable.}

% in the result in mixing between the two coherent regimes and after the second coherent regime.
%\begin{remark}
%\label{rem:vsmooth}
%To be more precise about regularity requirements, we assume that $v$ is Lipschitz continuous in both the spatial and temporal variable. This guarantees the existence of strong solutions of all the SDEs we will encounter, including in particular~\eqref{eq:difftimeproc}; see~\cite[sec.~3.3]{Pav14}. We note that for the geometric and operator-theoretic considerations from section~\ref{sec:dynLapConns} on it is sufficient that $v$ is piecewise differentiable (or even just piecewise Lipschitz) in the temporal variable and Lipschitz in the spatial one. This way, coefficients of the Laplace--Beltrami operators involved, such as~\eqref{eq:FP_Ito1}, are sufficiently smooth for intepretability of the operator in the weak sense.
%\end{remark}
%}
Our heuristic from section \ref{sssec:choosing_a} suggests the choice $a = 4/\pi$, and we will discretize $\Delta_{\Goa}$ on a grid of trajectories that start on a regular $30\times 30$ mesh and are sampled at 151 equispaced times instances on~$[0,4]$. The computed dominant spectrum, its classification into spatial and temporal eigenvalues, and the slicewise squared $L^2$ norms of the eight leading nontrivial spatial eigenfunctions are shown in Figure~\ref{fig:ChiSow_CMCM_evals_fnorms}.

\begin{figure}[hbt]
	\begin{subfigure}{0.49\textwidth}
	\includegraphics[width=\textwidth]{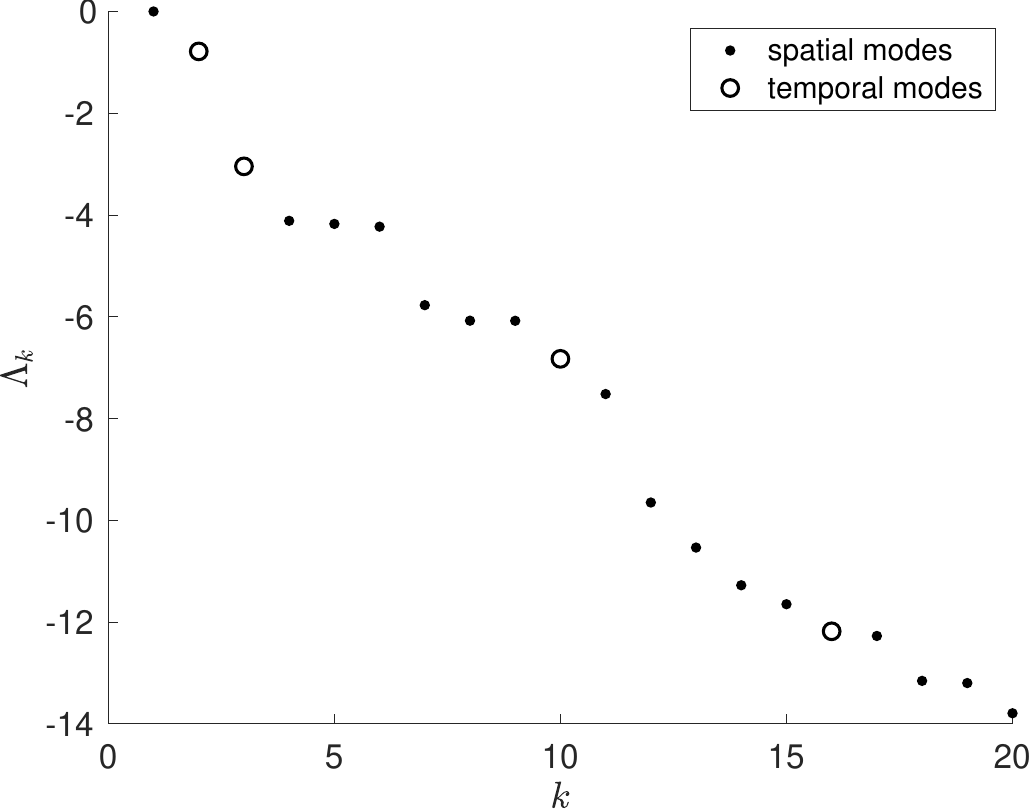}
	\subcaption{}
	\end{subfigure}
	\hfill
	\begin{subfigure}{0.49\textwidth}
	\includegraphics[width=\textwidth]{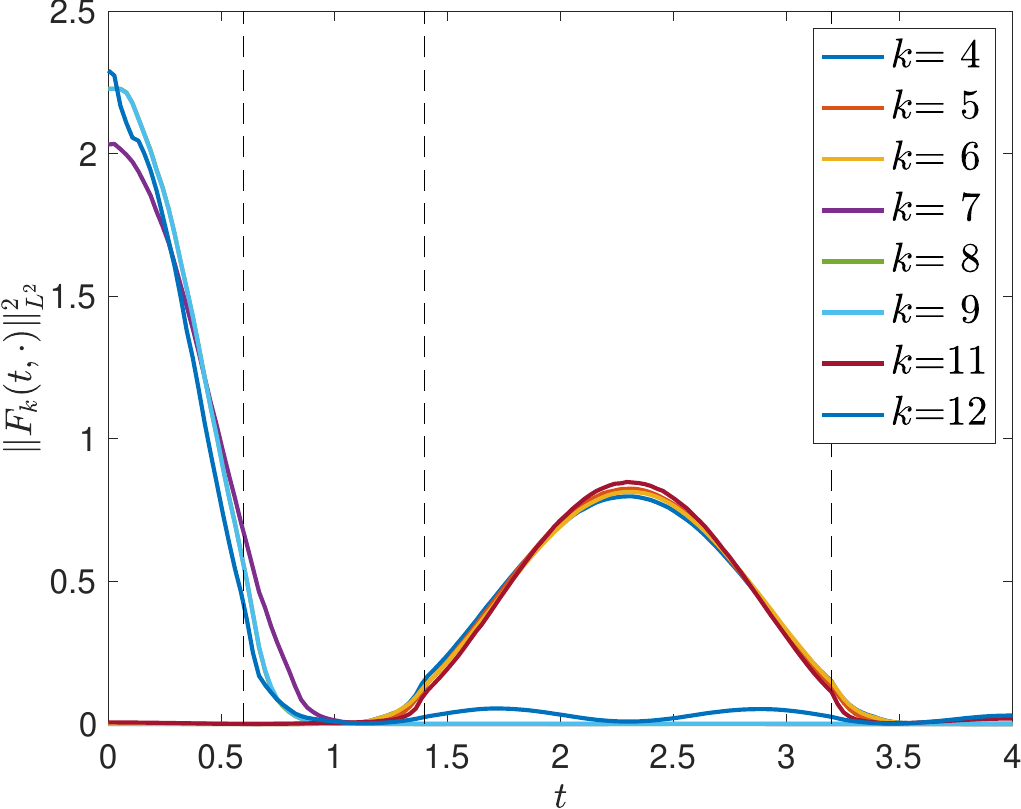}

	\subcaption{}
	\end{subfigure}
	\caption{(a) Eigenvalues. (b) Slicewise squared $L^2$ norms of 8 leading nontrivial spatial eigenfunctions for the Childress--Soward system with multiple coherent and mixing regimes ($k=8$ and $k=9$ are indistinguishable in this figure). The vertical dashed lines indicate the times $t = 0.6, 1.4, 3.2$, when coherent motion switches to shearing and vice versa.}
	\label{fig:ChiSow_CMCM_evals_fnorms}
\end{figure}

We observe that large timeslice norms are concentrated on time intervals of coherent behavior. Moreover, each spatial mode has large timeslice norms only on one of the coherent regimes.
With this we are able to easily identify the lifetimes of the corresponding eight semi-material coherent sets. Figure~\ref{fig:ChiSow_CMCM_largestslice} displays the leading eight nontrivial spatial modes at the timeslice of their respective maximal slicewise $L^2$ norm;  these eigenfunctions encode the spatial structure of the finite-time coherent sets.

%To see how the coherent sets are encoded in the spatial modes, we show in

\begin{figure}[hbt]
	\centering
	\includegraphics[width = 0.11\textwidth]{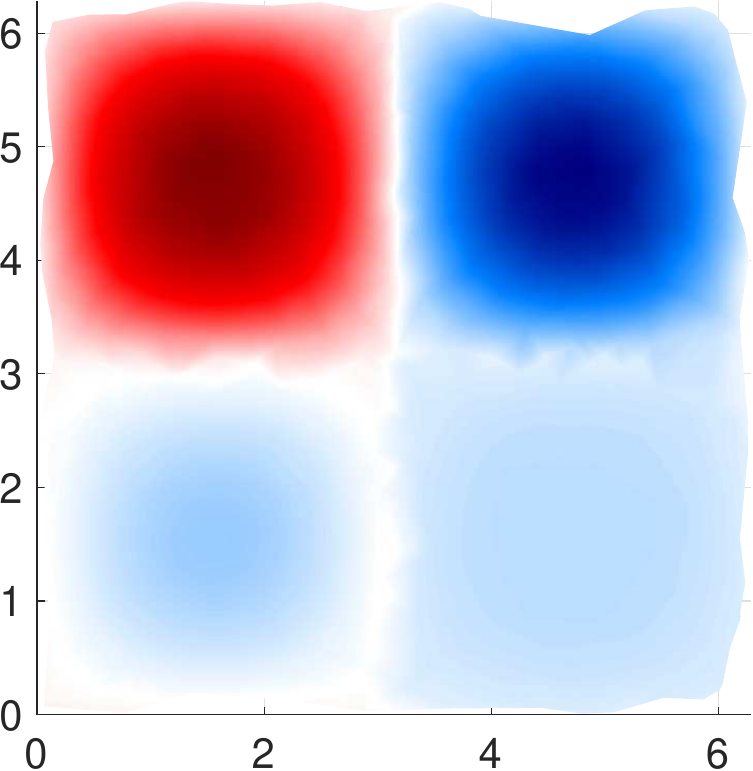}
	\hfill
	\includegraphics[width = 0.11\textwidth]{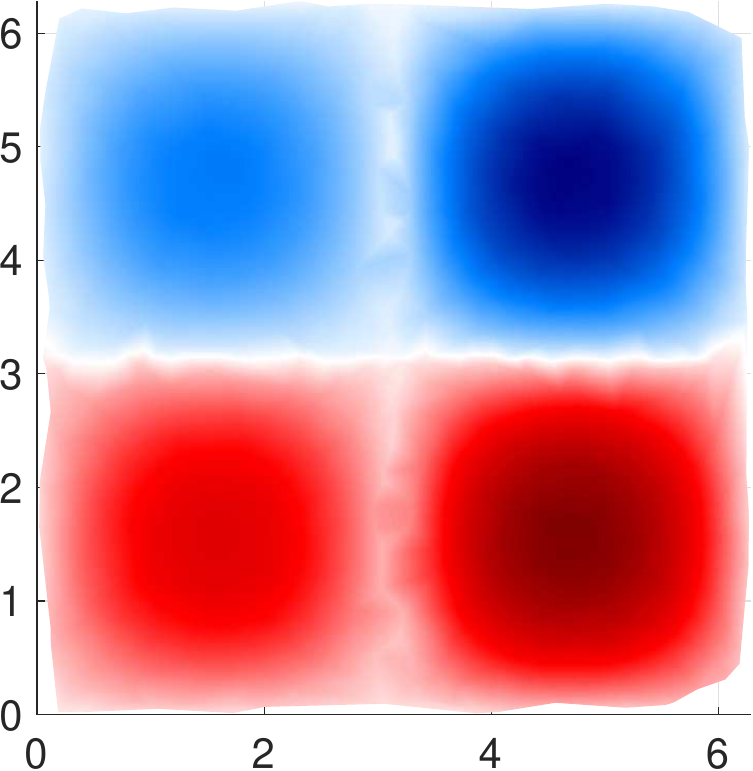}
	\hfill
	\includegraphics[width = 0.11\textwidth]{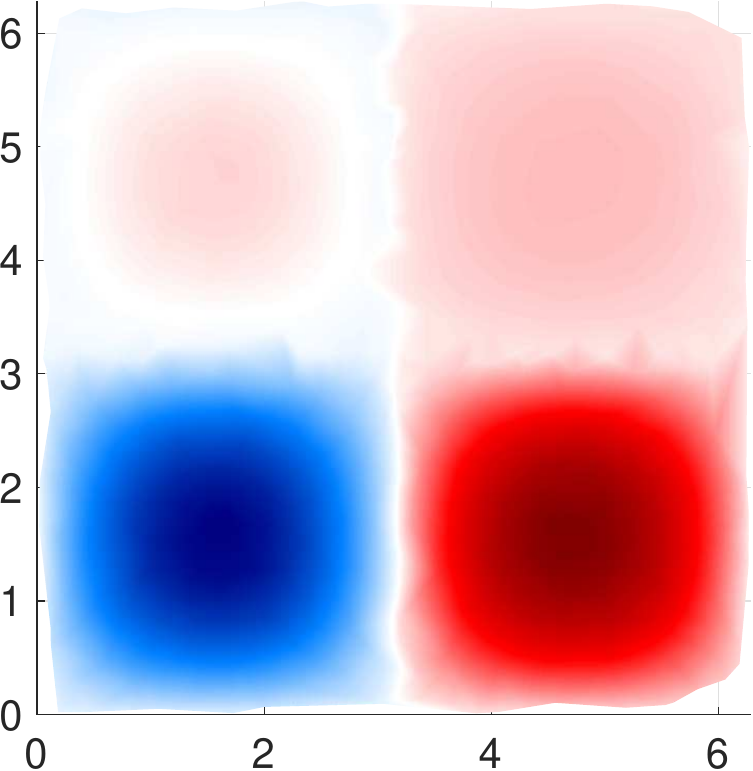}
	\hfill
	\includegraphics[width = 0.11\textwidth]{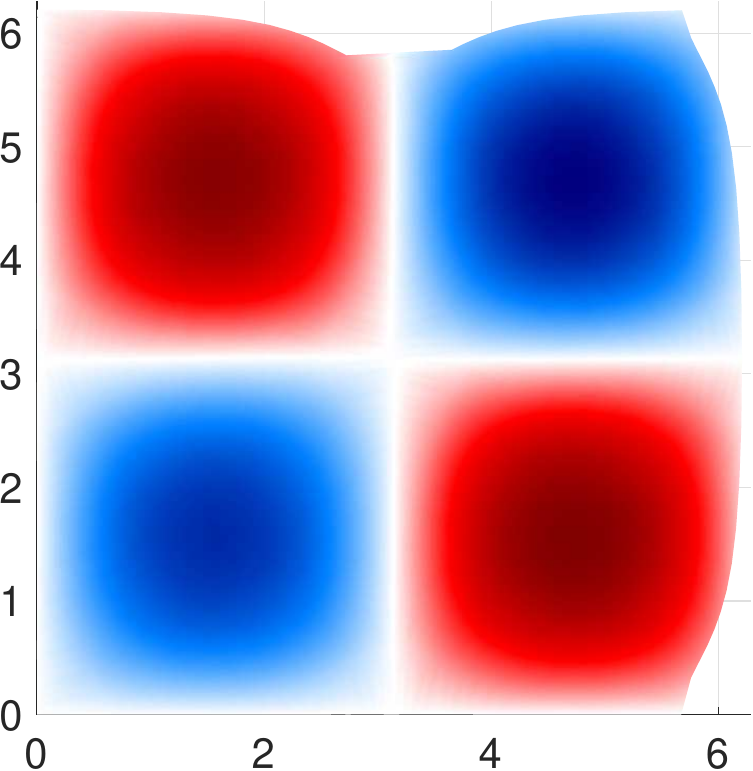}
	\hfill
	\includegraphics[width = 0.11\textwidth]{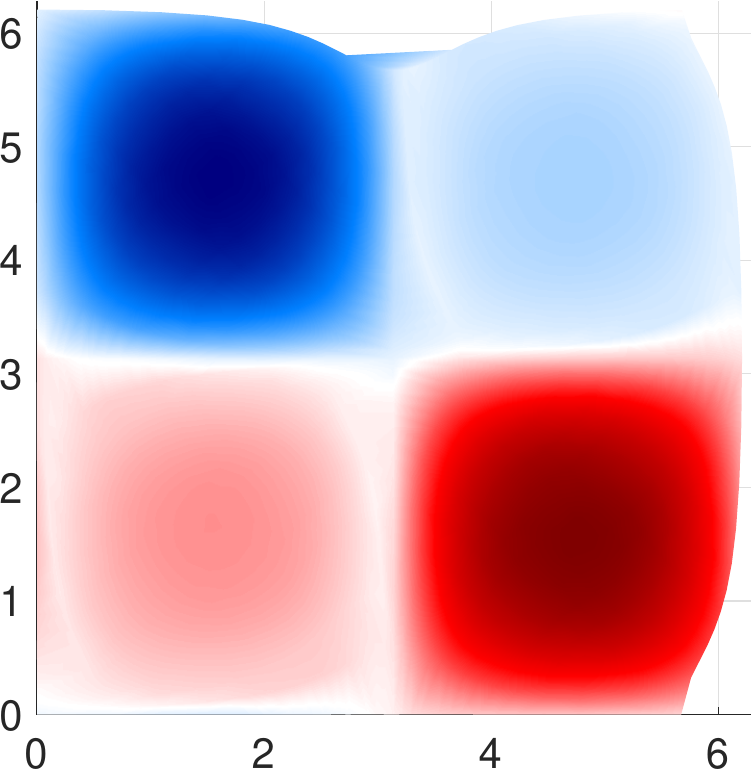}
	\hfill
	\includegraphics[width = 0.11\textwidth]{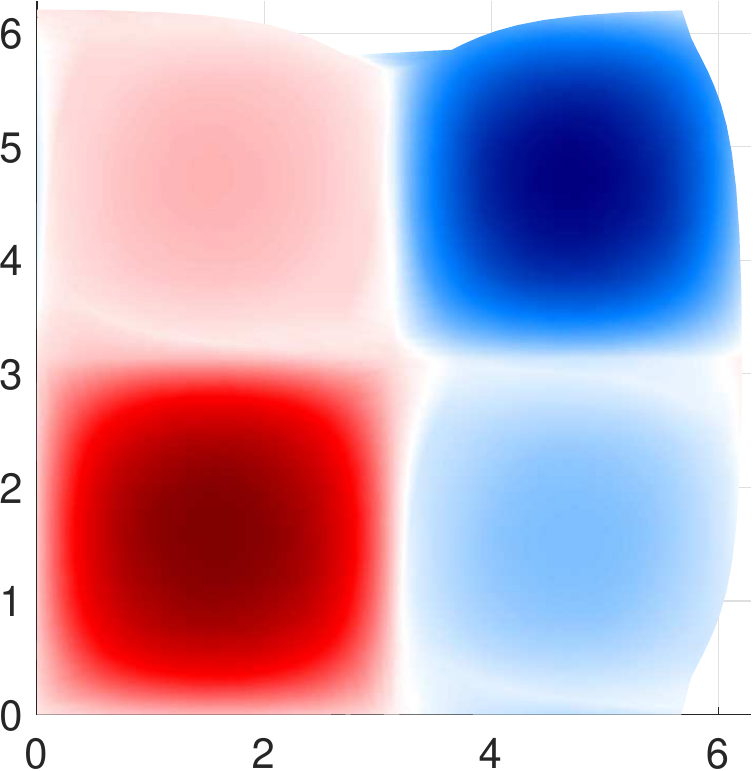}
	\hfill
	\includegraphics[width = 0.11\textwidth]{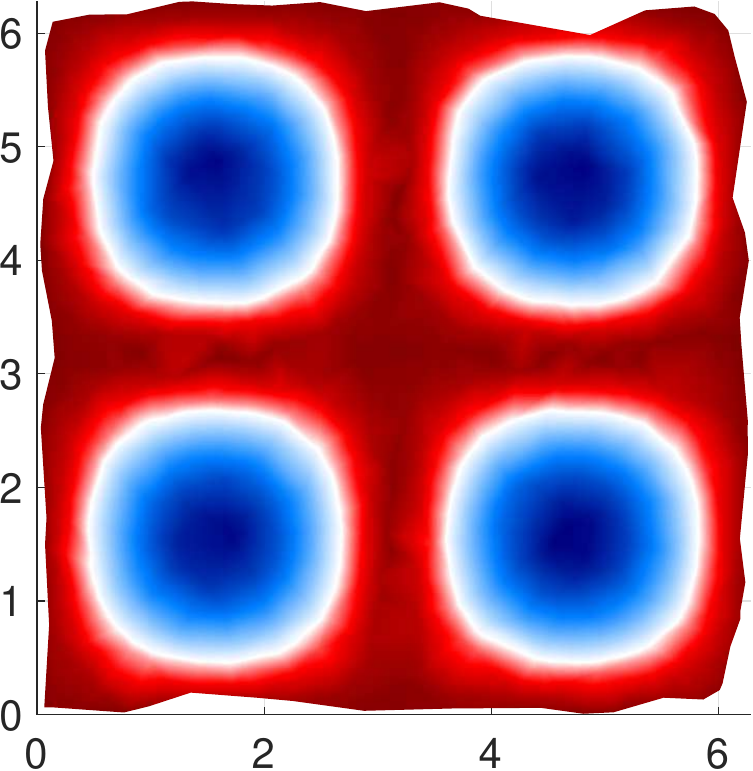}
	\hfill
	\includegraphics[width = 0.11\textwidth]{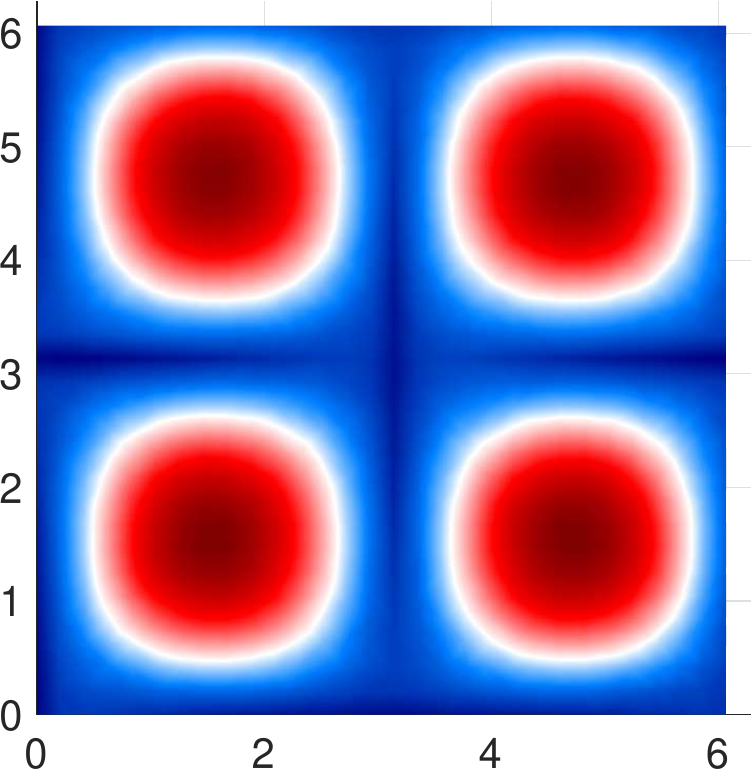}
	
	\caption{The eigenvectors listed in the legend of Figure~\ref{fig:ChiSow_CMCM_evals_fnorms} (b) shown as the timeslice $F_k(t,\cdot)$ where $t\mapsto \|F_k(t,\cdot)\|_{L^2(M)}$ is maximal, co-evolved by the flow to that timeslice (left to right). For $k=4,5,6,11$ this is for some $t\in [1.4,3.2]$, for the other eigenmodes this is for some~$t\in[0,0.6]$.}
	\label{fig:ChiSow_CMCM_largestslice}
\end{figure}

To distinguish the individual coherent sets, we apply SEBA to the eigenvectors~$F^{\spat}_k$ for~$k=2,\ldots,9$, and obtain the eight spacetime SEBA vectors~$S_1,\ldots, S_8$. Each one of them indicates one of the coherent gyres: four gyres in each of the two time intervals of coherent motion (not shown). We visualize the coherent gyres by depicting the pointwise maximum of SEBA vectors, $S_{\max} := \max_i S_i$, in coevolved spacetime $\set{M}_1$ in Figure~\ref{fig:ChiSow_CMCMC_SEBA}.

\begin{figure}[hbt]
	\centering
	\includegraphics[width = 0.6\textwidth]{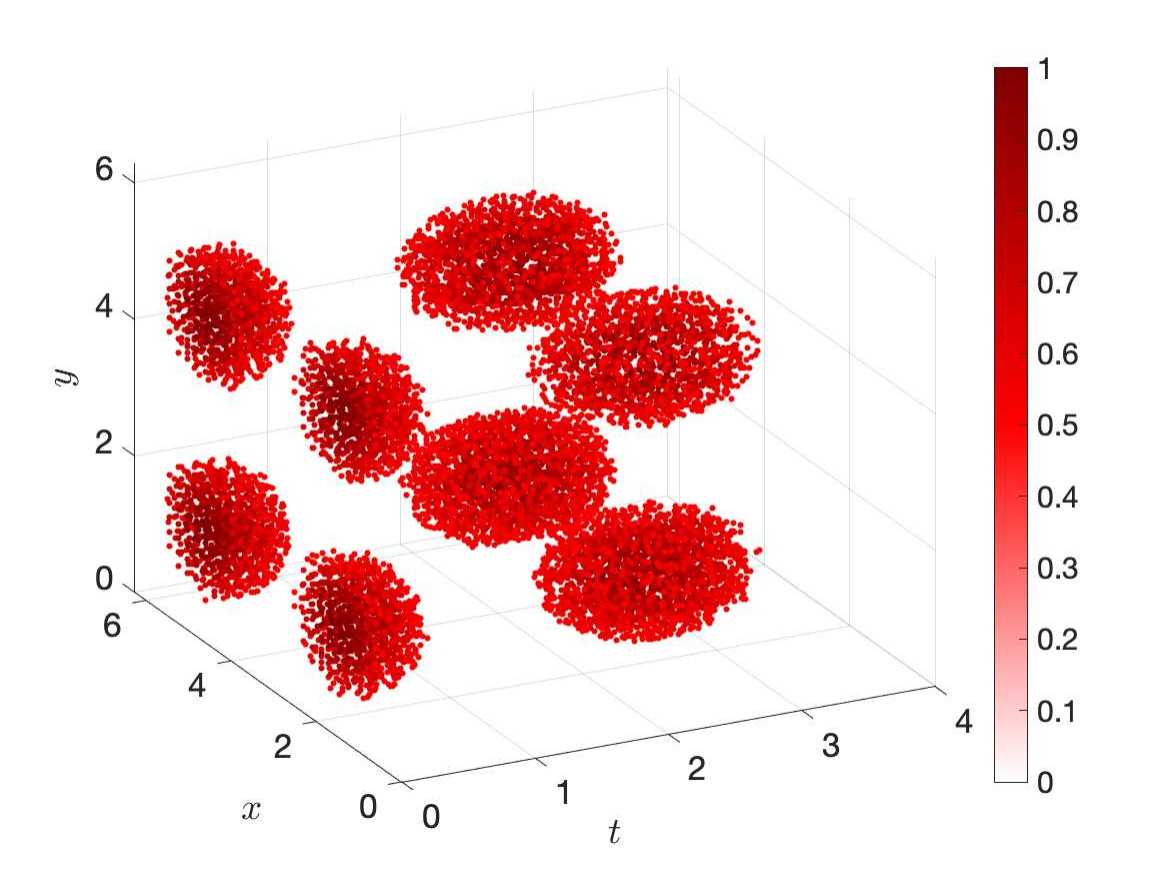}
	\caption{Maximum of SEBA vectors for the Childress--Soward system with multiple coherent and mixing regimes, plotted in spacetime $\set{M}_1$ for coevolved points. For visual clarity, only points with $S_{\max}$ value larger than $0.5$ are plotted. Note that the co-evolved points remain within each of their compact regions.}
	\label{fig:ChiSow_CMCMC_SEBA}
\end{figure}

%%%
\section*{Acknowledgments}

We thank Kathrin Padberg-Gehle for initial discussions on the idea of relaxing materiality, Ben Goldys and Georg Gottwald for helpful discussions on SDEs on manifolds, Carsten Gr\"aser and Martin Stahn for discussions on the $H^2$-regularity in Appendix~\ref{app:proof_int_cohmix}, and Shane Keating for pointing us to the Childress--Soward (``cat's eye'') flow.
%~\cite{ChSo89}.
We further thank Marcus Weber and Konstantin Fackelday for discussions on forward and backward transfer operators.

The research of GF was partially supported by an Australian Research Council Discovery Project.
PK thanks the UNSW School of Mathematics and Statistics for support through the Distinguished Visitor Scheme, where much of this research was carried out. PK was partially supported by Deutsche Forschungsgemeinschaft (DFG) through grant CRC 1114 ``Scaling Cascades in Complex Systems'', Project Number 235221301, Project A01 ``Coupling a multiscale stochastic precipitation model to large scale atmospheric flow dynamics''.
Both authors acknowledge the Australian-German Joint Research Cooperation Scheme, which supported visits to UNSW and FU Berlin.

%%%%%%%%%%%%%%%%%%%%%%%%%%%%%%%%%%%
\appendix

\section{Eigenvalue bounds and asymptotics}
\label{app:spectrum}
\begin{proof}[Proof of Theorem~\ref{thm:eigbounds}]
\quad

%%%%
\paragraph{Part 1}
 If $k=1$, then $\lambda^D_1=\Lambda_{1,a}^\spat=0$ for all $a> 0$. For $k\ge 2$, note that a minimizer of \eqref{eq:dl-lamk} is the eigenfunction~$f_k$. Let~$\smash{ \tilde F_i(t,x) := f_i(x) }$ for $(t,x)\in\set{M}_0$ and~$i\ge 1$.
 Let $H^1:=H^1(\mathbb{M}_0)$ denote the Sobolev space of weakly differentiable $L^2(\mathbb{M}_0)$ functions with derivatives in $L^2(\mathbb{M}_0)$ and set $H^1_\spat=H^1(\mathbb{M}_0)\cap \mathbb{S}_0^\spat$.
 Note that~$\smash{ \tilde F_i\in H^1_{\mathrm{spat}} }$ with $\partial_t \tilde F_i \equiv 0$.
 We define $\smash{ \mathbb{S}_{k}' := \mathrm{span}\{\tilde F_1,\ldots,\tilde F_k\} \subset H^1_{\mathrm{spat}} }$, and note that for simplicity we will denote $ \nabla_{g_t} F(t,\cdot)$ by~$ \nabla_{g_t} F$. Then, by the (Courant--Fischer) min-max theorem for self-adjoint operators with no essential spectrum~\cite[sec.~4.3]{Teschl2009} applied to the spatial modes we obtain
\begin{align*}
	-\Lambda_{k,a}^\spat &= \min_{\substack{\mathbb{S} \subset H^1_{\mathrm{spat}}\\[2pt] \dim\mathbb{S}=k}} \ \max_{\substack{F\in \mathbb{S}\\ \|F\|_{L^2}=1}} \iint_{\set{M}_0} a^2 (\partial_t F)^2  + \| \nabla_{g_t} F\|_{g_t}^2\,d\ell\,dt \\
	&\le \max_{\substack{F\in \mathbb{S}_{k}'\\ \|F\|_{L^2}=1}} \iint_{\set{M}_0} a^2 \underbrace{ (\partial_t F)^2 }_{\mathclap{\quad\quad =0\text{ for }F\in\mathbb{S}_{k}' }} + \| \nabla_{g_t} F\|_{g_t}^2\,d\ell\,dt\\
%	&= \max_{\substack{F\in \mathbb{S}_{k}'\\ \|F\|_{L^2}=1}} \iint_{\set{M}_0} a^2 \underbrace{ (\partial_t F)^2 }_{\mathclap{\quad\quad =0\text{ for }F\in\mathbb{S}_{k}' }} + \| \nabla F\|_{g_t}^2\,d\ell\,dt\\
	&= \max_{\substack{F\in \mathbb{S}_{k}'\\ \|F\|_{L^2}=1}} \iint_{\set{M}_0} \| \nabla_{g_t} F\|_{g_t}^2\,d\ell\,dt \stackrel{\eqref{eq:dl-lamk}}{=} -\lambda_k^D,
\end{align*}
where the inequality on the second line follows from bounding the minimum by the particular subspace~$\mathbb{S}_{k}'$, and the last equality comes from noting that the maximizer of the expression on its left-hand side is~$\tilde F_k$.

%%%%
\paragraph{Part 2}
The result for $\Lambda_{k,a}^\temp$ is obvious using the explicit formula for $\Lambda_{k,a}^\temp$.
For $\Lambda_{k,a}$, the min-max characterisation states that
\begin{equation}
	\label{eq:minmax}
	-\Lambda_{k,a} = \min_{\substack{\mathbb{S} \subset H^1\\[2pt] \dim\mathbb{S}=k}} \ \max_{\substack{F\in \mathbb{S}\\[2pt] \|F\|_{L^2}=1}} \iint_{\set{M}_0} a^2 (\partial_t F)^2  + \| \nabla_{g_t} F\|_{g_t}^2\,d\ell\,dt .
\end{equation}
Because the integrand is nondecreasing in $a$, the result follows.
The argument for $\Lambda_{k,a}^\spat$ is similar.

%%%%
\paragraph{Part 3}
Obvious, using the explicit formula for $\Lambda_{k,a}^\temp$.

%%%%
\paragraph{Part 4}
For $a > 0$, let $F_a\in \mathbb{S}_{k}^\spat$ be the minimiser of (\ref{eq:LB-lamkspat}) of unit norm in~$L^2(\mathbb{M}_0)$. Recall from Section~\ref{sec:specaug} that these eigenfunction are smooth.
    By parts 1 and 2, we know that $a\mapsto\Lambda_{k,a}^\spat$ is a nonincreasing function bounded below by~$\lambda_k^D$.
    By (\ref{eq:Gdecomp}) and (\ref{eq:LB-lamkspat}), we must therefore have that
    \begin{equation}
    	\label{eq:t_deriv_vanishes}
    	\int_{\mathbb{M}_0}(\partial_tF_a)^2 \to 0 \quad\text{as}\quad a\to\infty.
    \end{equation}
   Since $H^1$ is reflexive and $\sup_{a\ge 1}\|F_a\|_{H^1}<\infty$, by Banach--Alaoglu there is a weak accumulation point $F_* \in H^1$ along a sequence $a_i\uparrow\infty$:
\begin{equation}
\label{eq:weak_conv}
F_{a_i} \rightharpoonup F_* \in H^1 \text{ as }a_i\to \infty.
\end{equation}

We see that $\partial_t F_* \equiv 0$ in the weak sense, since for all $\phi \in C^{\infty}(\set{M}_0)$ we have that
\[
\left| \int_{\set{M}_0} \partial_tF_*\, \phi \right| \stackrel{\eqref{eq:weak_conv}}{=}   \lim_{i\to \infty} \left| \int_{\set{M}_0}  \partial_t F_{a_i}\, \phi \right| \le \lim_{i\to\infty} \| \partial_t F_{a_i}\|_{L^2}  \| \phi\|_{L^2} \stackrel{\eqref{eq:t_deriv_vanishes}}{=} 0.
\]
%Denote by $\Lambda_{k,*}$ the value obtained by inserting $F_*$ into (\ref{eq:LB-lamkspat}).
%Let us denote by $( \cdot,\cdot )_{H^1}$ the $L^2$ scalar product of the first weak derivatives. We now have that
%\[
%\renewcommand{\arraystretch}{1.5}
%\begin{array}{rcll}
%-\Lambda_{k,*}& = & \iint_{\set{M}_0} \| \nabla_{\Goa} F_* \|_{\Goa}^2 \,d\ell\,dt & \\
%& = & ( F_*, F_* )_{H^1} & \text{by $\partial_t F_* = 0$} \\
%& = & \lim_i ( F_*, F_{a_i})_{H^1} & \text{by \eqref{eq:weak_conv}} \\
%& \le & \lim_i ( F_*, F_* )_{H^1}^{1/2} \, ( F_{a_i}, F_{a_i})_{H^1}^{1/2} & \hspace*{-4em}\text{by $L^2$ Cauchy--Schwarz}\\
%& = & (-\Lambda_{k,*})^{1/2} \lim_i \left(\iint_{\set{M}_0} (\partial_t F_{a_i})^2 + \| \nabla_{g_t} F_{a_i}\|_{g_t}^2 \,d\ell\,dt \right)^{1/2} & \\
%& \le & (-\Lambda_{k,*})^{1/2} \lim_i \left(\iint_{\set{M}_0} a_i^2(\partial_t F_{a_i})^2 + \| \nabla_{g_t} F_{a_i}\|_{g_t}^2 \,d\ell\,dt \right)^{1/2} & \text{for $a_i\ge 1$}\\
%& = & (-\Lambda_{k,*})^{1/2} \lim_i (-\Lambda_{k,a_i})^{1/2}\,. &
%\end{array}
%\]

Let us consider the previous constructions for all eigenvalues $\Lambda_{j,a}$ for $j=1,\ldots,k$. For the associated eigenfunctions $F_{j,a}$, which we assume to all be $L^2$-normalised, we obtain as in \eqref{eq:weak_conv} the weak limits $F_{j,*}$ that are constant in the temporal coordinates and build an $L^2$-orthonormal system\footnote{By the Rellich--Kondrachov compact embedding theorem, the weak limits $F_{j,*}$ in $H^1$ are also strong limits in $L^2$, possibly by passing to further subsequences~\cite[Example~1.2.11]{badiale2010semilinear}. That is, $F_{j,a_i} \to F_{j,*}$ in $L^2$ as $i\to\infty$ for $j=1,\ldots,k$. Note that without loss we can take the same subsequence $(a_i)_{i\in\mathbb{N}}$ for all~$j=1,\ldots,k$. Since the $F_{j,a}$ are $L^2$-normalised, so are thus the~$F_{j,*}$. Orthogonality follows also from the $L^2$ convergence and $\langle F_{k,*}, F_{\ell,*} \rangle_{L^2}  = \lim_i \langle F_{k,a_i}, F_{\ell,a_i} \rangle_{L^2} = 0$.}.
We define $\mathbb{S}_{k,*} := \mathrm{span}\{F_{1,*},\ldots, F_{k,*} \} \subset H^1$ and $\mathbb{S}^D := \left\{ F(t,x)= f(x) \, : \, f\in H^1(M) \right\} \subset H^1$.

We obtain firstly by omitting nonnegative temporal contributions and secondly using orthonormality of the eigenfunctions in the $L^2$ norm that
\begin{equation}
	\label{eq:split_inner_prod}
	\begin{aligned}
		-\Lambda_{k,a} &= \max_{ \substack{F \in \mathrm{span} \{ F_{1,a},\ldots,F_{k,a} \} \\ \|F\|_{L^2}=1} } \iint_{\set{M}_0} a^2 (\partial_t F)^2  + \| \nabla_{g_t} F\|_{g_t}^2\,d\ell\,dt \\
		&\ge \max_{c\in\R^k,\, \|c\|_2=1} \iint_{\set{M}_0} \Big\| \nabla_{g_t} \sum_{j=1}^k c_j F_{j,a} \Big\|_{g_t}^2\,d\ell\,dt.
	\end{aligned} 
\end{equation}
By part 2, $a\mapsto \Lambda_{k,a}$ is monotone, and so we obtain for the entire sequence (not only along $a_i$)
\begin{equation}
\label{eq:barFbound}
\liminf_{i\to\infty} \Lambda_{k,a_i} = \lim_{a\to\infty} \Lambda_{k,a} \ge \lim_{a\to\infty} \Lambda^{\mathrm{spat}}_{k,a} \stackrel{\text{part 1}}{\ge } \lambda_k^D.	
\end{equation}
Note that for an arbitrary sequence of real-valued functions $(h_i)_i$ and a function $h$ satisfying $h \le \limsup_i h_i$ pointwise one has\footnote{To see this, for any $z$ and $\epsilon>0$ we denote by $z_\epsilon$ a point with $h(z_\epsilon) \ge \sup_z h(z) - \epsilon$. Then one has $\sup_z h(z) - \epsilon \le h(z_\epsilon) \le \limsup_i h_i(z_\epsilon) \le \limsup_i \sup_z h_i(z)$.}
\begin{equation}
    \label{eq:limsup_max_swap}
    \limsup_{i\to\infty} \sup_z h_i(z) \ge \sup_z h(z).
\end{equation}
Below we will use \eqref{eq:limsup_max_swap} with $h_i(c) = \iint_{\set{M}_0} \| \nabla_{g_t} \sum_j c_j F_{j,a_i} \|_{g_t}^2\,d\ell\,dt$ and \linebreak[5] $h(c) = \iint_{\set{M}_0} \| \nabla_{g_t} \sum_j c_j F_{j,*} \|_{g_t}^2\,d\ell\,dt$. These functions satisfy the condition $h \le \limsup_i h_i$  by $F_{j,a_i} \rightharpoonup F_{j,*}$ and lower weak semicontinuity of norms.
From \eqref{eq:split_inner_prod}, we obtain
\begin{align*}
	\limsup_{i\to\infty} -\Lambda_{k,a_i} &\ge \limsup_{i\to\infty}
		\max_{c\in\R^k,\, \|c\|_2=1} \iint_{\set{M}_0} \| \nabla_{g_t} \sum_j c_j F_{j,a_i} \|_{g_t}^2\,d\ell\,dt \\
		&\stackrel{\eqref{eq:limsup_max_swap}}{\ge} \max_{c\in\R^k,\, \|c\|_2=1} \iint_{\set{M}_0} \| \nabla_{g_t} \sum_j c_j F_{j,*} \|_{g_t}^2\,d\ell\,dt \\
		&= \max_{\substack{F\in \mathbb{S}_{k,*}\\ \|F\|_{L^2}=1}} \iint_{\set{M}_0} \| \nabla_{g_t} F\|_{g_t}^2\,d\ell\,dt \\
		&\ge \min_{\substack{\mathbb{S} \subset \mathbb{S}^D\\[2pt] \dim\mathbb{S}=k}} \ \max_{\substack{F\in \mathbb{S}\\ \|F\|_{L^2}=1}} \iint_{\set{M}_0} \| \nabla_{g_t} F \|_{g_t}^2\,d\ell\,dt = -\lambda_k^D,
\end{align*}
where the first equality follows from the orthonormality of the $F_{j,*}$ and the last inequality follows from $\set{S}_{k,*} \subset \set{S}^D$, since the $F_{j,*}$ are constant in their temporal coordinate.
%obtain in complete analogy to the computation in part 1 that
%\[
%\renewcommand{\arraystretch}{1.5}
%\begin{array}{rcll}
%-\Lambda_{k,*}& = & \iint_{\set{M}_0} \| \nabla_{\Goa} F_{k,*} \|_{\Goa}^2 \,d\ell\,dt & \\
%& = & \iint_{\set{M}_0}\| \nabla_{g_t} F_{k,*} \|_{g_t}^2\,d\ell\,dt  & \text{since $\partial_t F_{k,*} = 0$} \\
%& = & \displaystyle{\max_{\substack{F\in \mathbb{S}_{k}'\\ \|F\|_{G_{0,a}}=1}} }\iint_{\set{M}_0} \| \nabla_{g_t} F \|_{g_t}^2\,d\ell\,dt & \\
%& \ge & \displaystyle{ \min_{\substack{\mathbb{S} \subset \mathbb{S}^D\\[2pt] \dim\mathbb{S}=k}} \ \max_{\substack{F\in \mathbb{S}\\ \|F\|_{L^2}=1}} }\iint_{\set{M}_0} \| \nabla_{g_t} F \|_{g_t}^2\,d\ell\,dt = -\lambda_k^D, &
%\end{array}
%\]
%where the inequality follows from $\mathbb{S}_k' \subset \mathbb{S}^D$. 
With \eqref{eq:barFbound}, the claim follows.
\end{proof}

%%%
\section{Properties of the surrogate model}
\label{app:int_cohmix}

%--
\subsection{Solution of the surrogate problem}
\label{app:proof_int_cohmix}

\begin{proof}[Proof of Proposition~\ref{prop:int_cohmix}]
By classical theory we have
\begin{lemma}
	The solution $u$ of the eigenvalue equation~\eqref{eq:surrogate2} satisfies~$u\in H^2_{\mathrm{loc}}(0,1) \subset C^1(0,1)$.
\end{lemma}
\begin{proof}
Since classical, we will only sketch the steps here. Consider the bilinear form $B(\cdot,\cdot)$ associated to the differential operator~$Lu(t) = u''(t) -  \rho(t)u(t)$ with homogeneous Neumann boundary conditions. By the Lax--Milgram theorem~\cite[Thm.~6.2.1]{Evans10} and Poincar\'e's inequality~\cite[Thm.~5.8.1]{Evans10} there is a unique, well-defined solution operator $S: L^2\to H^1 \cap \mathbf{1}^{\perp},\ f\mapsto u$, of $B(u,v) = \langle f,v \rangle$ $\forall v\in H^1$~$(\ast)$. By the Sobolev embedding theorem~\cite[Thm.~5.6.6]{Evans10} $S$ is compact and thus has countable spectrum. If $u \in H^1$ is an eigenfunction for some eigenvalue $\nu\neq 0$, then $u$ solves $(\ast)$ with $f = \nu u \in L^2$, and by the regularity results for elliptic equations~\cite[Thm.~6.3.1]{Evans10} we obtain~$u\in H^2_{\mathrm{loc}}$.
Again by the Sobolev embedding theorem~\cite[Thm.~5.6.6]{Evans10} in one dimension (implying that $H^2$ is continuously embedded in $C^1$) we obtain that $u$ is continuously differentiable on~$(0,1)$.
\end{proof}

With the ansatz $u(t) = \alpha e^{\omega t}$ in~\eqref{eq:surrogate2} we obtain
\[
\renewcommand{\arraystretch}{1.25}
\omega^2= \left\{
\begin{array}{ll}
\omega_z^2 = 2\frac{\nu+z}{a^2}, & t\in [0,p],\\
\omega_Z^2 = 2\frac{\nu+Z}{a^2},  & t\in [p,1],
\end{array}
\right.
\]
thus the general solution has the form
\[
u(t)= \left\{
\begin{array}{ll}
u_z(t) := \alpha_1 e^{\omega_z t} + \alpha_2 e^{-\omega_z t}, & t\in [0,p],\\
u_Z(t) := \alpha_3 e^{\omega_Z t} + \alpha_4 e^{-\omega_Z t},  & t\in [p,1].
\end{array}
\right.
\]
The Neumann boundary conditions translate into
\[
t=0:\ \alpha_1\omega_z - \alpha_2 \omega_z = 0,\qquad
t=1:\  \alpha_3\omega_Z e^{\omega_Z} - \alpha_4 \omega_Z e^{-\omega_Z} = 0,
\]
that is $\alpha_1 = \alpha_2$ and $\alpha_4 = \alpha_3 e^{2\omega_Z}$.
Now we require continuity at the interface $t=p$, i.e.,
\[
\alpha_1 \left( e^{\omega_z p} + e^{-\omega_z p} \right) = \alpha_3 \left( e^{\omega_Z p} + e^{2\omega_Z} e^{-\omega_Z p} \right),
\]
equivalently $\alpha_1 \cosh (\omega_z p) = \alpha_3 e^{\omega_Z}  \cosh(\omega_Z (1-p))$. Solving this for $\alpha_3$ and substituting into $u(t)$ (replacing the free constant $\alpha_1$ by a general constant $\alpha/2$) gives $u_z(t) = \alpha \cosh (\omega_z t)$ and
\[
\begin{aligned}
u_Z(t) &= \frac{\alpha}{2} \frac{\cosh(\omega_z p)}{\cosh (\omega_Z (1-p))} e^{-\omega_Z} \left( e^{\omega_Z t} + e^{2\omega_Z} e^{-\omega_Z t} \right) = \alpha \frac{\cosh(\omega_z p)}{\cosh (\omega_Z (1-p))} \cosh(\omega_Z (1-t)).
\end{aligned}
\]
To obtain the equation characterising the eigenvalues, we invoke the continuity of the derivative of $u$ at $t=p \in (0,1)$, i.e., $u_z'(p) = u_Z'(p)$, and obtain
\[
\omega_z \sinh (\omega_z p) = -\omega_Z \frac{\cosh(\omega_z p)}{\cosh (\omega_Z (1-p))} \sinh(\omega_Z (1-p)),
\]
which gives $\frac{\omega_z}{\omega_Z} \tanh (\omega_z p) = - \tanh(\omega_Z (1-p))$,
i.e.,~\eqref{eq:int_cohmix_eval}. This concludes the proof.
\end{proof}

%---
\subsection{Eigenvalue analysis of the surrogate problem}
\label{app:int_cohmix_evals}

Without loss, we can take~$a>0$. We consider the situation where the mixing rate is given by the function~$\rho$ from~\eqref{eq:rho}, i.e., we are here in the setting of Proposition~\ref{prop:int_cohmix}. In particular, we will analyse the solutions of~\eqref{eq:int_cohmix_eval}, looking at the cases $\nu > -z$, $-z > \nu > -Z$, and~$-Z > \nu$ separately. Our findings are summarized in Proposition~\ref{prop:zeros} below. To avoid overly complicated formulas, we will suppress the dependence of $\omega_z$ and~$\omega_Z$ on $\nu$ in the following.

\paragraph{The case $\nu > -z$} If $\nu > -z$, we have $\omega_z, \omega_Z\in\R$. Further, by the structure of \eqref{eq:int_cohmix_eval} we can then assume $\omega_z, \omega_Z > 0$ without loss, and it follows that every term on the left-hand side of \eqref{eq:int_cohmix_eval} is positive, hence the equation can not have a solution. Thus, every solution of~\eqref{eq:int_cohmix_eval} satisfies~$\nu < -z$, as $\nu = -z$ can be ruled out by similar arguments.

\paragraph{The case $\nu \in (-Z,-z)$} In this case $\omega_z \in \mathrm{i}\R,\ \omega_Z \in \R$. As before, the signs can be chosen such that $\omega_z = \mathrm{i}|\omega_z|,\ \omega_Z >0$. The equation~\eqref{eq:int_cohmix_eval} reads in this case as
\[
f(\nu) := \tanh(\omega_Z (1-p)) - \frac{|\omega_z|}{\omega_Z} \tan(|\omega_z| p) = 0.
\]
With $|\omega_z| = \frac{\sqrt{-2(\nu+z)}}{a}$, $\omega_Z = \frac{\sqrt{2(\nu + Z)}}{a}$, we observe that
\begin{itemize}
\item
$\nu \mapsto \tanh \Big( \frac{\sqrt{2(\nu+Z})}{a}(1-p) \Big)$ is increasing;
\item
$\nu \mapsto \frac{|\omega_z|}{\omega_Z} = \sqrt{\frac{-\nu-z}{\nu+Z}} = \sqrt{\frac{Z-z}{\nu+Z} -1} $ is decreasing; and
\item
$\nu \mapsto \tan\Big(\frac{\sqrt{-2(\nu+z)}}{a} p \Big)$ is decreasing between its singularities.
\end{itemize}
In summary, between its singularities, the function $f$ is continuous with values increasing from $-\infty$ to $\infty$; see Figure~\ref{fig:surrogate_example}(b). The singularities $\nu_k^*$ can be characterised by
\begin{equation} \label{eq:int_singularities}
\frac{\sqrt{-2(\nu_k^*+z)}}{a} p = \frac{\pi}{2} + k\pi,\quad k\in\mathbb{Z} \quad\Longleftrightarrow \quad \nu_k^* = -z - \frac{a^2}{2} \Big(\frac{2k+1}{2p}\pi \Big)^2,\quad k\in\mathbb{Z},
\end{equation}
thus there is exactly one eigenvalue in every open interval defined by two adjacent singularities inside~$(-Z,-z)$. The rightmost interval is not bounded by a singularity on the right, but by $-z$, as $f(-z)>0$ guarantees the existence of a zero of $f$ larger than~$\nu_0^*$.

What happens if $p$ is so small that the first singularity of the kind as in~\eqref{eq:int_singularities} satisfies~$\nu_0^* < -Z$? We note that then $\tan(|\omega_z| p) > 0$ for $\nu \in (-Z,-z)$, since it is a decreasing function in $\nu$ (see above) and it admits the value 0 at $\nu = -z$, while having its first singularity that is left of $-z$, i.e.\ $\nu_0^*$, smaller than~$-Z$. For $\nu \to -Z^+$ we have $|\omega_z| / \omega_Z \to +\infty$ and $\tanh(\omega_Z(1-p)) \to 0$, thus $\lim_{\nu\to -Z^+} f(\nu) = -\infty$. As $f$ is continuous and monotonically increasing on $(-Z,-z)$, we obtain that it has exactly one zero in $(-Z,-z)$ if~$\nu_0^* < -Z$.

For $p\to 0$ and fixed $\nu\in (-Z,-z)$ we have
\[
\tanh(\omega_Z (1-p)) \to \text{const}>0, \quad \frac{|\omega_z|}{\omega_Z} \tan(|\omega_z| p)\to 0.
\]
Thus, for any such fixed $\nu$ one has that $f(\nu)>0$ if $p$ sufficiently small, implying that
\begin{equation}
\label{eq:lam_p_to_0}
\text{the zero of $f$ in $(-Z,-z)$ converges to $-Z$ as~$p\to 0$.}
\end{equation}

For $a\to 0$ we have that the largest zero of $f$ converges agains $-z$, since by the above there is a zero in the interval~$(\nu_0^*,-z)$ and $\lim_{a\to 0}\nu_0^* = -z$ by~\eqref{eq:int_singularities}. To summarize, we have shown:
\begin{proposition} \label{prop:zeros}
The dominant (largest) eigenvalue~$\nu_0$ of the surrogate problem \linebreak[4] $\frac{a^2}{2} u'' - \rho u = \nu\, u$ on $(0,1)$ with homogeneous Neumann boundary conditions and \linebreak[4] $\rho(t) = z \mathbf{1}_{[0,p]}(t) + Z \mathbf{1}_{(p,1]}(t)$ satisfies:
	\begin{enumerate}[(a)]
		\item $\nu_0 \in (-Z,-z)$
		\item $\lim_{p\to 0} \nu_0 = -Z$
		\item $\lim_{a\to 0} \nu_0 = -z$
	\end{enumerate}
Additionally, by similar arguments to those in Theorem~\ref{thm:eigbounds}, we expect
\[
\lim_{a\to\infty} \nu_0 = -\int_0^1 \rho(t)dt = - pz - (1-p)Z.
\]
\end{proposition}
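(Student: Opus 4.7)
The plan is to consolidate and slightly extend the analysis already carried out in the paragraphs preceding the proposition, formalizing it into a proof. For part (a), I would first re-derive the case distinctions on the sign of $\nu+z$ and $\nu+Z$: when $\nu > -z$ both $\omega_z,\omega_Z\in\mathbb{R}_{>0}$, so every term on the left-hand side of \eqref{eq:int_cohmix_eval} is strictly positive and $f$ has no zero there (and similarly $f(-z)>0$). When $\nu<-Z$, both $\omega_z$ and $\omega_Z$ are purely imaginary, and the analogous sign argument applied to the resulting $\tan$-$\tan$ expression shows that any zero there cannot be the largest. It remains to certify existence of a zero in $(-Z,-z)$ via the intermediate value theorem using the monotonicity bullets in the excerpt: on the rightmost subinterval determined by the singularities $\nu_k^*$ in \eqref{eq:int_singularities}, $f$ is continuous and strictly increasing from $-\infty$ to $f(-z)>0$; if no singularity falls inside $(-Z,-z)$ (small $p$), the same monotonicity combined with $\lim_{\nu\to -Z^+}f(\nu)=-\infty$ and $f(-z)>0$ yields a unique zero.

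For (b), I would take $\nu\in(-Z,-z)$ fixed and observe that $\tanh(\omega_Z(1-p))\to\tanh(\omega_Z)>0$ while $\tfrac{|\omega_z|}{\omega_Z}\tan(|\omega_z|p)\to 0$ as $p\to 0^+$; so $f(\nu)\to\tanh(\omega_Z)>0$. Since $\lim_{\nu\to -Z^+} f(\nu)=-\infty$ holds uniformly in $p$ in any small right-neighborhood of $-Z$ (the $\tanh$-term is bounded and the pole of $\tan$ lives strictly inside $(-Z,-z)$ only for $p$ not too small), the unique zero $\nu_0(p)$ must collapse onto $-Z$, giving $\lim_{p\to 0}\nu_0=-Z$. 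For (c), I would use the explicit formula $\nu_0^*=-z-\tfrac{a^2}{2}\bigl(\tfrac{\pi}{2p}\bigr)^2$ for the rightmost singularity, which tends to $-z$ as $a\to 0$; since $\nu_0\in(\nu_0^*,-z)$ by the analysis in (a), a squeeze argument closes the case.

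The main obstacle is the $a\to\infty$ asymptotic, which is not spelled out in the discussion preceding the proposition. My approach would be variational: multiplying the eigenvalue equation by the normalized ground-state eigenfunction $u_0$ with $\|u_0\|_{L^2}=1$ and integrating by parts gives
\begin{equation*}
-\nu_0 \;=\; \frac{a^2}{2}\int_0^1 (u_0')^2\,dt \;+\; \int_0^1 \rho(t)\,u_0(t)^2\,dt.
\end{equation*}
Since $-\nu_0\le Z$ by (a) and $a^2/2\to\infty$, we must have $\int_0^1(u_0')^2\,dt\to 0$; together with normalization this forces $u_0\to\mathbf{1}_{[0,1]}$ strongly in $L^2$, and then continuity of $\rho\mapsto\int \rho u^2$ on $L^2$-bounded sets (with $\rho\in L^\infty$) yields $\int_0^1\rho(t)\,u_0(t)^2\,dt\to\int_0^1\rho(t)\,dt=pz+(1-p)Z$. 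The delicate point is handling the vanishing-gradient limit rigorously, which is the same mechanism by which $\Delta^D$ arises from $\Delta_{\Goa}$ in Section \ref{ssec:avging}; alternatively, one can substitute the explicit trigonometric/hyperbolic forms from Proposition \ref{prop:int_cohmix} into \eqref{eq:int_cohmix_eval} and expand $\omega_z,\omega_Z=O(1/a)$ to read off the limit directly, which is essentially a calculation but avoids the functional-analytic argument.
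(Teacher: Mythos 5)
Your treatment of parts (a)--(c) follows essentially the same route as the paper's Appendix~\ref{app:int_cohmix_evals}: the sign analysis ruling out $\nu>-z$, the intermediate-value/monotonicity argument locating the largest zero of $f$ in $(-Z,-z)$ (including the small-$p$ case where $\nu_0^*<-Z$), the fixed-$\nu$ limit $f(\nu)\to\tanh(\omega_Z)>0$ as $p\to 0$ for (b), and the squeeze $\nu_0\in(\nu_0^*,-z)$ with $\nu_0^*\to -z$ for (c). The one place you genuinely go beyond the paper is the $a\to\infty$ asymptotic: the paper explicitly does \emph{not} prove this (it is prefaced by ``we expect''), whereas your variational identity $-\nu_0=\tfrac{a^2}{2}\int(u_0')^2+\int\rho\,u_0^2$ together with the uniform bound $-\nu_0\le Z$ gives a clean argument. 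Two small points there: $\int(u_0')^2\to 0$ plus normalization only forces $u_0\to\pm\mathbf{1}$ via the Poincar\'e--Wirtinger inequality (positivity of the ground state, or simply the fact that only $u_0^2$ enters $\int\rho\,u_0^2$, resolves the sign), and for the matching upper bound $-\nu_0\le\int_0^1\rho$ you should test the Rayleigh quotient with $u=\mathbf{1}$ rather than rely on $-\nu_0\le Z$ alone; with these additions your sketch closes a gap the paper leaves open. Also, your justification that zeros with $\nu<-Z$ ``cannot be the largest'' should be stated simply as: any such zero lies below the zero already exhibited in $(-Z,-z)$, rather than via a sign argument (the paper shows zeros do exist in that range).
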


\paragraph{The case $\nu < -Z$} In this case
\[
f(\nu) = \frac{|\omega_z|}{|\omega_Z|} \mathrm{i} \tan(|\omega_z| p) + \mathrm{i} \tan(|\omega_Z| (1-p)),
\]
and its zeros are sandwiched between singularities of the two trigonometric tangent functions. The situation is depicted in Figure~\ref{fig:surrogate_example}(b) for
$
z = 2,\quad
Z = 40,\quad
p = 0.25,\quad
\frac{a^2}{2} = \frac{1}{\pi^2}.
$
The red crosses indicate the eigenvalues of \eqref{eq:surrogate2} computed by a finite difference scheme on a uniform grid of 1000 nodes.

%%%%%%%%%%%%%%%%%%%%%%%%%%%%%%%%%%%
\small
\bibliographystyle{myalpha}
\bibliography{references}

\newcommand{\etalchar}[1]{$^{#1}$}
\begin{thebibliography}{GDGGG{\etalchar{+}}13}

\bibitem[ACKBV20]{AnKaBV20}
F.~Andrade-Canto, D.~Karrasch, and F.~Beron-Vera.
\newblock Genesis, evolution, and apocalypse of loop current rings.
\newblock {\em Physics of Fluids}, 32(11):116603, 2020.

\bibitem[AFJ]{AFJ21}
F.~Antown, G.~Froyland, and O.~Junge.
\newblock Linear response for the dynamic {L}aplacian and finite-time coherent
  sets.
\newblock To appear in Nonlinearity, 2021.

\bibitem[AP15]{AlPe15}
M.~R. Allshouse and T.~Peacock.
\newblock {Lagrangian based methods for coherent structure detection}.
\newblock {\em Chaos: An Interdisciplinary Journal of Nonlinear Science},
  25(9):097617, 2015.

\bibitem[AT12]{allshouse2012detecting}
M.~R. Allshouse and J.-L. Thiffeault.
\newblock Detecting coherent structures using braids.
\newblock {\em Physica D: Nonlinear Phenomena}, 241(2):95--105, 2012.

\bibitem[BBC{\etalchar{+}}14]{boccaletti2014structure}
S.~Boccaletti, G.~Bianconi, R.~Criado, C.~I. Del~Genio, J.~G{\'o}mez-Gardenes,
  M.~Romance, I.~Sendina-Nadal, Z.~Wang, and M.~Zanin.
\newblock The structure and dynamics of multilayer networks.
\newblock {\em Physics reports}, 544(1):1--122, 2014.

\bibitem[BGT20]{blachutGT20}
C.~Blachut and C.~Gonz{\'a}lez-Tokman.
\newblock A tale of two vortices: {H}ow numerical ergodic theory and transfer
  operators reveal fundamental changes to coherent structures in non-autonomous
  dynamical systems.
\newblock {\em Journal of Computational Dynamics}, 7(2):369--399, 2020.

\bibitem[BM12]{budivsic2012geometry}
M.~Budi{\v{s}}i{\'c} and I.~Mezi{\'c}.
\newblock Geometry of the ergodic quotient reveals coherent structures in
  flows.
\newblock {\em Physica D: Nonlinear Phenomena}, 241(15):1255--1269, 2012.

\bibitem[BS10]{badiale2010semilinear}
M.~Badiale and E.~Serra.
\newblock {\em Semilinear Elliptic Equations for Beginners: Existence Results
  via the Variational Approach}.
\newblock Springer Science \& Business Media, 2010.

\bibitem[Cha84]{chavel_eigenvalues}
I.~Chavel.
\newblock {\em Eigenvalues in Riemannian geometry}.
\newblock Academic press, 1984.

\bibitem[Che70]{cheeger70}
J.~Cheeger.
\newblock {\em A lower bound for the smallest eigenvalue of the Laplacian}.
\newblock Princeton U. Press, Princeton, 1970.

\bibitem[CS89]{ChSo89}
S.~Childress and A.~Soward.
\newblock Scalar transport and alpha-effect for a family of cat's-eye flows.
\newblock {\em Journal of Fluid Mechanics}, 205:99--133, 1989.

\bibitem[CSSdS07]{chelton07}
D.~B. Chelton, M.~G. Schlax, R.~M. Samelson, and R.~A. de~Szoeke.
\newblock Global observations of large oceanic eddies.
\newblock {\em Geophysical Research Letters}, 34(15), 2007.

\bibitem[DDSRC{\etalchar{+}}13]{de2013mathematical}
M.~De~Domenico, A.~Sol{\'e}-Ribalta, E.~Cozzo, M.~Kivel{\"a}, Y.~Moreno, M.~A.
  Porter, S.~G{\'o}mez, and A.~Arenas.
\newblock Mathematical formulation of multilayer networks.
\newblock {\em Physical Review X}, 3(4):041022, 2013.

\bibitem[DJM16]{DJM}
A.~Denner, O.~Junge, and D.~Matthes.
\newblock Computing coherent sets using the {F}okker--{P}lanck equation.
\newblock {\em Journal of Computational Dynamics}, 3(2):163, 2016.

\bibitem[EA21]{ElA21}
A.~El~Aouni.
\newblock A hybrid identification and tracking of {L}agrangian mesoscale
  eddies.
\newblock {\em Physics of Fluids}, 33(3):036604, 2021.

\bibitem[Eva10]{Evans10}
L.~C. Evans.
\newblock {\em Partial Differential Equations}.
\newblock American Mathematical Society, 2nd edition, 2010.

\bibitem[FF60]{FF}
H.~Federer and W.~H. Fleming.
\newblock Normal and integral currents.
\newblock {\em Annals of Mathematics}, pages 458--520, 1960.

\bibitem[FHR{\etalchar{+}}12]{FrEtAl12}
G.~Froyland, C.~Horenkamp, V.~Rossi, N.~Santitissadeekorn, and A.~S. Gupta.
\newblock Three-dimensional characterization and tracking of an {A}gulhas
  {R}ing.
\newblock {\em Ocean Modelling}, 52:69--75, 2012.

\bibitem[FJ18]{FJ18}
G.~Froyland and O.~Junge.
\newblock Robust {FEM}-based extraction of finite-time coherent sets using
  scattered, sparse, and incomplete trajectories.
\newblock {\em SIAM Journal on Applied Dynamical Systems}, 17(2):1891--1924,
  2018.

\bibitem[FK15]{FrKw15}
G.~Froyland and E.~Kwok.
\newblock Partitions of networks that are robust to vertex permutation
  dynamics.
\newblock {\em Special Matrices}, 3(1):22--42, 2015.

\bibitem[FK17]{FrKo17}
G.~Froyland and P.~Koltai.
\newblock Estimating long-term behavior of periodically driven flows without
  trajectory integration.
\newblock {\em Nonlinearity}, 30(5):1948, 2017.

\bibitem[FK20]{FrKw17}
G.~Froyland and E.~Kwok.
\newblock A dynamic {L}aplacian for identifying {L}agrangian coherent
  structures on weighted {R}iemannian manifolds.
\newblock {\em Journal of Nonlinear Science}, 30:1889--–1971, 2020.

\bibitem[FKN{\etalchar{+}}19]{FackEtAl19}
K.~Fackeldey, P.~Koltai, P.~N{\'e}vir, H.~Rust, A.~Schild, and M.~Weber.
\newblock From metastable to coherent sets--time-discretization schemes.
\newblock {\em Chaos: An Interdisciplinary Journal of Nonlinear Science},
  29(1):012101, 2019.

\bibitem[FKS20]{FKS20}
G.~Froyland, P.~Koltai, and M.~Stahn.
\newblock Computation and optimal perturbation of finite-time coherent sets for
  aperiodic flows without trajectory integration.
\newblock {\em SIAM Journal on Applied Dynamical Systems}, 19(3):1659--1700,
  2020.

\bibitem[FLS10]{FLS10}
G.~Froyland, S.~Lloyd, and N.~Santitissadeekorn.
\newblock Coherent sets for nonautonomous dynamical systems.
\newblock {\em Physica D}, 239(16):1527--1541, 2010.

\bibitem[FPG14]{FPG14}
G.~Froyland and K.~Padberg-Gehle.
\newblock Almost-invariant and finite-time coherent sets: directionality,
  duration, and diffusion.
\newblock In {\em Ergodic Theory, Open Dynamics, and Coherent Structures},
  pages 171--216. Springer, 2014.

\bibitem[Fro13]{F13}
G.~Froyland.
\newblock An analytic framework for identifying finite-time coherent sets in
  time-dependent dynamical systems.
\newblock {\em Physica D: Nonlinear Phenomena}, 250:1--19, 2013.

\bibitem[Fro15]{Fro15}
G.~Froyland.
\newblock Dynamic isoperimetry and the geometry of {L}agrangian coherent
  structures.
\newblock {\em Nonlinearity}, 28(10):3587, 2015.

\bibitem[FRS19]{FrRoSa19}
G.~Froyland, C.~P. Rock, and K.~Sakellariou.
\newblock Sparse eigenbasis approximation: Multiple feature extraction across
  spatiotemporal scales with application to coherent set identification.
\newblock {\em Communications in Nonlinear Science and Numerical Simulation},
  77:81--107, 2019.

\bibitem[FSM10]{FSM10}
G.~Froyland, N.~Santitissadeekorn, and A.~Monahan.
\newblock Transport in time-dependent dynamical systems: Finite-time coherent
  sets.
\newblock {\em Chaos}, 20(4):043116, 2010.

\bibitem[Fu06]{fu06}
L.-L. Fu.
\newblock Pathways of eddies in the south atlantic ocean revealed from
  satellite altimeter observations.
\newblock {\em Geophysical Research Letters}, 33(14), 2006.

\bibitem[GD20]{GiDa20}
D.~Giannakis and S.~Das.
\newblock Extraction and prediction of coherent patterns in incompressible
  flows through space--time {K}oopman analysis.
\newblock {\em Physica D: Nonlinear Phenomena}, 402:132211, 2020.

\bibitem[GDGGG{\etalchar{+}}13]{Gomez2013diffusion}
S.~G{\'o}mez, A.~D{\'i}az-Guilera, J.~G{\'o}mez-Garde{\~n}es, C.~J.
  P{\'e}rez-Vicente, Y.~Moreno, and A.~Arenas.
\newblock Diffusion dynamics on multiplex networks.
\newblock {\em Physical Review Letters}, 110(2):028701, 2013.

\bibitem[GRSK12]{groveretal12}
P.~Grover, S.~D. Ross, M.~A. Stremler, and P.~Kumar.
\newblock Topological chaos, braiding and bifurcation of almost-cyclic sets.
\newblock {\em Chaos: An Interdisciplinary Journal of Nonlinear Science},
  22(4):043135, 2012.

\bibitem[HBV13]{haller2013coherent}
G.~Haller and F.~Beron-Vera.
\newblock Coherent {L}agrangian vortices: the black holes of turbulence.
\newblock {\em Journal of Fluid Mechanics}, 731, 2013.

\bibitem[HP98]{pojehaller98}
G.~Haller and A.~Poje.
\newblock Finite time transport in aperiodic flows.
\newblock {\em Physica D: Nonlinear Phenomena}, 119(3-4):352--380, 1998.

\bibitem[Hsu02]{Hsu02}
E.~P. Hsu.
\newblock {\em Stochastic analysis on manifolds}, volume~38.
\newblock American Mathematical Soc., 2002.

\bibitem[JMM04]{Jungeetal04}
O.~Junge, J.~E. Marsden, and I.~Mezic.
\newblock Uncertainty in the dynamics of conservative maps.
\newblock In {\em 2004 43rd IEEE Conference on Decision and Control (CDC)(IEEE
  Cat. No. 04CH37601)}, volume~2, pages 2225--2230. IEEE, 2004.

\bibitem[KAB{\etalchar{+}}14]{kivela2014multilayer}
M.~Kivel{\"a}, A.~Arenas, M.~Barthelemy, J.~P. Gleeson, Y.~Moreno, and M.~A.
  Porter.
\newblock Multilayer networks.
\newblock {\em Journal of complex networks}, 2(3):203--271, 2014.

\bibitem[KK20]{KK}
D.~Karrasch and J.~Keller.
\newblock A geometric heat-flow theory of {L}agrangian coherent structures.
\newblock {\em Journal of Nonlinear Science}, 30:1849--1888, 2020.

\bibitem[Kro91]{Krol91}
M.~Krol.
\newblock On the averaging method in nearly time-periodic advection-diffusion
  problems.
\newblock {\em SIAM Journal on Applied Mathematics}, 51(6):1622--1637, 1991.

\bibitem[KS21]{KaSch21}
D.~Karrasch and N.~Schilling.
\newblock A {L}agrangian perspective on nonautonomous advection-diffusion
  processes in the low-diffusivity limit.
\newblock {\em arXiv preprint arXiv:2102.04777}, 2021.

\bibitem[Lab15]{lablee}
O.~Labl{\'e}e.
\newblock {\em Spectral Theory in Riemannian Geometry}.
\newblock European Mathematical Society, 2015.

\bibitem[MB14]{ma2014differential}
T.~Ma and E.~M. Bollt.
\newblock Differential geometry perspective of shape coherence and curvature
  evolution by finite-time nonhyperbolic splitting.
\newblock {\em SIAM Journal on Applied Dynamical Systems}, 13(3):1106--1136,
  2014.

\bibitem[MMP84]{MMP84}
R.~MacKay, J.~Meiss, and I.~Percival.
\newblock Transport in {H}amiltonian systems.
\newblock {\em Physica D: Nonlinear Phenomena}, 13(1-2):55--81, 1984.

\bibitem[MOR20]{macmillan20}
T.~MacMillan, N.~T. Ouellette, and D.~H. Richter.
\newblock Detection of evolving {L}agrangian coherent structures: A multiple
  object tracking approach.
\newblock {\em Physical Review Fluids}, 5(12):124401, 2020.

\bibitem[NPGR21]{ndouretal21}
M.~Ndour, K.~Padberg-Gehle, and M.~Rasmussen.
\newblock Spectral early-warning signals for sudden changes in time-dependent
  flow patterns.
\newblock {\em Fluids}, 6(2):49, 2021.

\bibitem[Ott89]{ottino}
J.~M. Ottino.
\newblock {\em The kinematics of mixing: stretching, chaos, and transport},
  volume~3.
\newblock Cambridge university press, 1989.

\bibitem[Pav14]{Pav14}
G.~A. Pavliotis.
\newblock {\em Stochastic processes and applications: diffusion processes, the
  {F}okker--{P}lanck and {L}angevin equations}, volume~60.
\newblock Springer, 2014.

\bibitem[Pie91]{pierrehumbert91}
R.~T. Pierrehumbert.
\newblock Chaotic mixing of tracer and vorticity by modulated travelling
  {R}ossby waves.
\newblock {\em Geophysical \& Astrophysical Fluid Dynamics}, 84(1-4):285--319,
  1991.

\bibitem[PS08]{PaSt08}
G.~Pavliotis and A.~Stuart.
\newblock {\em Multiscale methods: averaging and homogenization}.
\newblock Springer Science \& Business Media, 2008.

\bibitem[PY93]{pierrehumbertyang93}
R.~T. Pierrehumbert and H.~Yang.
\newblock Global chaotic mixing on isentropic surfaces.
\newblock {\em Journal of the atmospheric sciences}, 50(15):2462--2480, 1993.

\bibitem[RKLW90]{romkedar90}
V.~Rom-Kedar, A.~Leonard, and S.~Wiggins.
\newblock An analytical study of transport, mixing and chaos in an unsteady
  vortical flow.
\newblock {\em Journal of Fluid Mechanics}, 214:347 -- 394, 1990.

\bibitem[RSPB11]{rypina2011investigating}
I.~I. Rypina, S.~Scott, L.~J. Pratt, and M.~G. Brown.
\newblock Investigating the connection between complexity of isolated
  trajectories and {L}agrangian coherent structures.
\newblock {\em Nonlinear Processes in Geophysics}, 18(6):977--987, 2011.

\bibitem[SKJ21]{schillingetal21}
N.~Schilling, D.~Karrasch, and O.~Junge.
\newblock Heat-content and diffusive leakage from material sets in the
  low-diffusivity limit.
\newblock {\em arXiv preprint arXiv:2102.08311}, 2021.

\bibitem[SLM05]{shaddenetal05}
S.~C. Shadden, F.~Lekien, and J.~E. Marsden.
\newblock Definition and properties of {L}agrangian coherent structures from
  finite-time {L}yapunov exponents in two-dimensional aperiodic flows.
\newblock {\em Physica D}, 212(3):271--304, 2005.

\bibitem[Tes09]{Teschl2009}
G.~Teschl.
\newblock {\em Mathematical Methods in Quantum Mechanics: With Applications to
  {S}chr{\"o}dinger Operators}, volume~99 of {\em Graduate Studies in
  Mathematics}.
\newblock American Mathematical Society, 2009.

\bibitem[Tes12]{Teschl2012}
G.~Teschl.
\newblock {\em Ordinary differential equations and dynamical systems}, volume
  140.
\newblock American Mathematical Soc., 2012.

\bibitem[Thi03]{thiffeault2003advection}
J.-L. Thiffeault.
\newblock Advection--diffusion in {L}agrangian coordinates.
\newblock {\em Physics Letters A}, 309(5-6):415--422, 2003.

\end{thebibliography}

\end{document}